\setlist[enumerate]{leftmargin=*,labelindent=.5pc}
\newtheorem{thm}{Theorem}
\newtheorem{cor}[thm]{Corollary}
\newtheorem{lem}[thm]{Lemma}
\newtheorem{prop}[thm]{Proposition}
\newtheorem*{prethm}{Main theorem}
\newtheorem*{precor}{Corollary of main theorem}
\newtheoremstyle{example}{\topsep}{\topsep}%
     {}
     {}
     {\bfseries}
     {.}
     {2pt}
     {\thmname{#1}\thmnumber{ #2}\thmnote{ #3}}
   \theoremstyle{example}
\newtheorem{defi}[equation]{Definition}
\newtheorem{rem}[equation]{Remark}
\newtheorem{ex}[equation]{Example}
\def\on{\operatorname}
\def\Fun{\operatorname{Fun}}
\newcommand{\Wedge}{\mbox{\Large $\wedge$}}
\def\colim{\operatorname*{colim}}
\def\bAut{\operatorname{\bf Aut}}
\def\Aff{\on{Aff}}
\def\Aft{\on{{Aff}_{ft}}}
\def\O{{\mathcal{O}}}
\def\NN{ {\mathbf N}}
\def\Coh{ \on{Coh}}
\def\C{ {\EuScript C}}
\def\Tr{\on{Tr}}
\setlist[enumerate,1]{label=(\arabic{*})}
\setlist[enumerate,2]{label=(\alph{*})}
\setlist[enumerate,3]{label=(\roman{*})}
\def\centerarc[#1](#2)(#3:#4:#5)  { \draw[#1] ($(#2)+({#5*cos(#3)},{#5*sin(#3)})$) arc (#3:#4:#5); }
\tikzstyle{dot}=[draw,circle,fill=black,inner sep=0,minimum size=4pt]
\tikzset{
    partial ellipse/.style args={#1:#2:#3}{
        insert path={+ (#1:#3) arc (#1:#2:#3)}
    }
}
\def\ZZ{\mathbf{Z}}
\def\F{{ F}}
\def\mod#1{\on{mod}}
\def\Perf{\on{Perf}}
\def\ev{\on{ev}}
\def\E{ {\EuScript E}}
\def\mod#1{{#1}\on{-mod}}
\newcommand{\co}[1]{ {\langle #1 \rangle}}
\def\presuper#1#2%
\def\presub#1#2%
   \def\HH#1{{HH}(#1)}
   \def\NC#1{{HC^{-}}(#1)}
   \def\NCw#1{{HC_{w}^{-}}(#1)}
    \def\Mod#1{{{\rm Mod}}(#1)}
   \def\Spec#1{ {{\on{Spec}(#1)}}}
   \def\Homun#1#2#3{\underline{\on{Hom}}_{#1}(#2,#3)}	
    \def\Endun#1#2{\underline{\on{End}}_{#1}(#2)}
      \def\endun#1{\underline{\on{End}}(#1)}
   \def\End#1#2{{\on{End}}_{#1}(#2)}
   \def\Tan#1{{T}(#1)}
   \def\IndCoh#1{\on{IndCoh}(#1)}
   \def\QCoh#1{\on{QCoh}(#1)}
   \def\Rep#1{\on{Rep}(#1)}
    \def\IndPerf#1{\on{Ind}(\on{Perf}(#1))}
      \def\Loc#1{\on{Loc}(#1)}
    \def\DGCat{\on{DGCat_{cont}}}
    \def\DGCattwo{\on{DGCat}^{2}_{\rm cont}}
    \def\dgcat{\on{dgcat}}
    \def\Tr#1{\on{Tr}(#1)}
    \def\Trsub#1{\on{Tr}_{#1}}
    \def\Trr#1#2{{\on{Tr}}_{#1}(#2)}
    \def\trr#1{{\on{tr}}_{#1}}
    \def\tr{{\on{tr}}}
    \def\Ind#1{\on{Ind}(#1)}
    \def\mult#1{\on{m}_{#1}}
    \def\act#1{\on{m}_{#1}}
    \def\res#1#2{\on{res}^{#1}_{#2}}
    \def\Vect{\on{Vect}_{k}}
    \def\Id#1{\on{Id}_{#1}}
    \def\co#1{\on{co}_{#1}}
    \def\ev#1{\on{ev}_{#1}}
    \def\unit#1{1_{#1}}
     \def\evL#1{\on{ev}^{l}_{#1}}
      \def\Inv#1{\on{Id}^{!}_{#1}}
  \def\F{{\mathcal F}}
   \def\M{{\mathcal M}} 
   \def\A{{ A}}
   \def\C{{C}}
   \def\D{{ D}}
   \def\E{{E}}
   \def\alg{{\EuScript A}}
   \def\CC{{\EuScript C}}
   \def\DD{{\EuScript D}}
   \def\Funex#1#2{{\on{Fun^{ex}}(#1,#2)}}
   \def\Fun#1#2{{\on{Fun}(#1,#2)}}
    \def\fun{{\on{Fun}}}
   \def\Dual#1#2{\on{\mathbf{D}}_{#1}(#2)}
   \def\Coh#1{\on{Coh}(#1)}
   \def\Perf#1{\on{Perf}(#1)}
   \def\Hom#1#2#3{{\on{Hom}_{#1}}(#2,#3)}
   \def\hom#1#2{{\on{Hom}}(#1,#2)}
    \def\endo#1{{\on{End}(#1)}}
    \def\bendo#1{{\on{\bf End}(#1)}}
    \def\Corr{{\on{Corr}(\Aff)}}  
    \def\Prstk{\on{PrStk}}
    \def\Prlft{\on{{PrStk}_{laft}}}
    \def\Prlftdef{\on{{PrStk}_{laft-def}}}
    \def\Map#1#2{{\on{Map}}(#1,#2)}
    \def\Mapp#1#2#3{{\on{Map}}_{#1}(#2,#3)}
    \def\Gm{\mathbf{G}_{m}}
     \def\Ga{\mathbf{G}_{a}}
    \def\Sym#1#2{{\on{Sym}}_{#1}(#2)}
    \def\sym#1#2{{\on{Sym}}^{#1}(#2)}
    \def\Cotang#1{{T}^{*}(#1)}
   \def\cotang#1#2{{{T}^{*}_{#1}}(#2)}
    \def\Tang#1{{T}(#1)}
   \def\tang#1#2{{{T}_{#1}}(#2)}
   \def\intmod#1#2{{#1}\on{-mod}(#2)}
   \def\calg{\on{CAlg}_{k}} 
   \def\dfor#1#2#3{\mathcal{A}^{#1}(#2,#3)}    
   \def\cldfor#1#2#3{\mathcal{A}^{#1,cl}(#2,#3)}  
    \def\Dist#1{\on{Dist}(#1)}
    \def\Prim#1{\on{Prim}(#1)}
    \def\vrig#1{(#1)^{\rm vrig}}
    \def\Funop#1#2{\on{Fun}^{\rm oplax}_{\otimes}(#1,#2)}
    \def\Spc{\on{Spc}}
    \def\Map#1#2{\on{Map}(#1,#2)}
    \def\Mapun#1#2{\underline{\on{Map}}(#1,#2)}
    \def\Funi{{\mathcal F}}
     \def\Funny{\tilde{\mathcal F}}
    \def\Euni{{\EuScript E}}
    \def\can#1{\on{can}(#1)}
    \newcommand\doubarr{%
        \mathrel{\vcenter{\mathsurround0pt
                \ialign{##\crcr                        
                        \noalign{\nointerlineskip}$\rightarrow$\crcr
                        \noalign{\nointerlineskip}$\rightarrow$\crcr
                }%
        }}%
}
\newcommand{\spmat}[1]{%
  \left(\begin{smallmatrix}#1\end{smallmatrix}\right)%
}
    \newcommand\triparr{%
        \mathrel{\vcenter{\mathsurround0pt
                \ialign{##\crcr
                        \noalign{\nointerlineskip}$\rightarrow$\crcr
                        \noalign{\nointerlineskip}$\rightarrow$\crcr
                        \noalign{\nointerlineskip}$\rightarrow$\crcr
                }%
        }}%
}
    \newcommand{\opeqn}{\begin{equation}}
    \newcommand{\cleqn}{\end{equation}}
    \newcommand{\opeqnn}{\begin{equation*}}
    \newcommand{\cleqnn}{\end{equation*}}
\begin{document}

\title{Relative Calabi-Yau structures II: Shifted Lagrangians in the moduli of objects}
\author{Christopher Brav\footnote{Laboratory for Mirror Symmetry, Higher School of Economics, Moscow, email:{\tt c.brav@hse.ru}}
\;and Tobias Dyckerhoff\footnote{Fachbereich Mathematik, University of Hamburg, email:{\tt tobias.dyckerhoff@uni-hamburg.de}}}

\maketitle
\begin{abstract}
	We show that a Calabi-Yau structure of dimension $d$ on a smooth dg category $\C$
	induces a symplectic form of degree $2-d$ on `the moduli space of objects' $\M_{\C}$.
	We show moreover that a relative Calabi-Yau structure on a dg functor $\C \rightarrow \D$ compatible with
	the absolute Calabi-Yau structure on $C$ induces a Lagrangian structure on the corresponding map of moduli       
	$\M_{\D} \rightarrow \M_{\C}$.
\end{abstract}

\tableofcontents

\vfill\eject

\numberwithin{equation}{section}
\numberwithin{thm}{section}
\setcounter{secnumdepth}{2}

\section{Introduction}

Given a smooth, proper variety $X$ over a field $k$, there is a reasonable derived moduli space of perfect complexes $\M_{X}$ on $X$, with the property that at a point in $\M_{X}$ corresponding to a perfect complex $E$ on $X$, the tangent complex at $E$ identifies with the shifted (derived) endomorphisms of $E$:
\[
\tang{E}{\M_{X}} \simeq \endo{E}[1].
\]
For $X$ of dimension $d$, a trivialisation $\theta: \O_{X} \simeq \Wedge^{d} \Cotang{X}$ of its canonical bundle gives a trace map $\tr : \endo{E} \stackrel{\theta}\simeq \hom{E}{E \otimes \Wedge^{d}\Cotang{X}} \rightarrow k[-d]$ such that the Serre pairing
\begin{equation}\label{spair}
\tang{E}{\M_{X}}[-1]^{\otimes 2} \simeq \endo{E}^{\otimes 2} \stackrel{\circ}\rightarrow \endo{E} \stackrel{\tr}\rightarrow k[-d]
\end{equation}
is anti-symmetric and non-degenerate.

When $d=2$, so that $X$ is a K3 or abelian surface, and the moduli space $\M_{X}$ is replaced with that of simple sheaves, Mukai \cite{mukai} showed that the above pointwise pairings come from a global algebraic symplectic form. Similarly, when $X$ is taken to be a compact oriented topological surface, Goldman \cite{goldman} showed that using Poincar\'e pairings in place of Serre pairings as above gives a global symplectic form on the moduli space of local systems on $X$.

Such examples motivated Pantev-To\"en-Vaqui\'e-Vezzosi \cite{ptvv} to introduce shifted symplectic structures on derived Artin stacks and to show that, in particular, the above pairings \label{spair} are induced by a global symplectic form of degree $2-d$ on $\M_{X}$. The main goal of this paper is to establish an analogue of this global symplectic form when a Calabi-Yau variety $(X,\theta)$ is replaced by a `non-commutative Calabi-Yau' in the form of a nice dg category $\C$ equipped with some extra structure and the moduli space $\M_{X}$ is replaced with a `moduli space of objects' $\M_{\C}$. More precisely, a non-commutative Calabi-Yau of dimension $d$ is a (very) smooth dg category $\C$ equipped with a negative cyclic chain $\theta: k[d] \rightarrow \NC{\C}$ satisfying a certain non-degeneracy condition, and the moduli space $\M_{\C}$ parametrises `pseudo-perfect $\C$-modules', introduced by T\"oen-Vaqui\'e in \cite{tv}. More generally, we shall be interested in `relative left Calabi-Yau structures' on dg functors $\C \rightarrow \D$, in the sense of Brav-Dyckerhoff \cite{cy1}.

The main result of this paper is Theorem \ref{mainthm}, which we paraphrase here.

\begin{prethm}
Given a non-commutative Calabi-Yau $(\C,\theta)$ of dimension $d$, the moduli space of objects $\M_{\C}$ has an induced symplectic form of degree $2-d$. If in addition $f: \C \rightarrow \D$ is a dg functor equipped with a relative left Calabi-Yau structure, then the induced map of moduli spaces $\M_{\D} \rightarrow \M_{\C}$ has an induced Lagrangian structure. 
\end{prethm}

In Corollary \ref{antican}, we shall show that the above theorem about non-commutative Calabi-Yaus allows us to say something new even for non-compact commutative Calabi-Yaus with Gorenstein singularities. Namely, we have the following corollary.

\begin{precor}
Let $X$ be a finite type Gorenstein scheme of dimension $d$ with a trivialisation $\theta : \O_{X} \simeq K_{X}$ of its canonical bundle. Then the moduli space $\M_{X}$ of perfect complexes with proper support has an induced symplectic form of degree $2-d$. When $X$ arises as the zero-scheme of an anticanonical section $s \in K_{Y}^{-1}$ on a Gorenstein scheme $Y$ of dimension $d+1$, then the restriction map
\[
\M_{Y} \rightarrow \M_{X}
\]
carries a Lagrangian structure.
\end{precor}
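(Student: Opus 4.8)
The plan is to deduce both assertions from the main theorem (Theorem \ref{mainthm}) by promoting the geometric data on $X$ and $Y$ to the non-commutative data it governs. For the symplectic statement I would set $\C = \Perf{X}$, the dg category of perfect complexes on $X$ with proper support, and first identify its moduli of objects with the space in the statement, $\M_\C \simeq \M_X$, using the representability and tangent-complex computation $\tang{E}{\M_X} \simeq \endo{E}[1]$ of Toën-Vaquié \cite{tv}. The non-commutative Calabi-Yau structure of dimension $d$ on $\C$ is then to be produced from $\theta$: under the Hochschild-Kostant-Rosenberg type identification of $\NC{\Perf{X}}$, the trivialisation $\theta : \O_X \simeq K_X$ of the canonical bundle determines a negative cyclic class $k[d] \to \NC{\Perf{X}}$, and its non-degeneracy is precisely Grothendieck-Serre duality for the Gorenstein scheme $X$ with the duality twist trivialised by $\theta$. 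Here the Gorenstein hypothesis is essential, as it makes the dualising complex a shifted line bundle $K_X[d]$ and hence the relevant duality functor an equivalence; the proper-support condition plays the role of properness. Granting this, Theorem \ref{mainthm} endows $\M_X \simeq \M_\C$ with a symplectic form of degree $2-d$.

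For the Lagrangian statement I would consider the pushforward functor $i_* : \Perf{X} \to \Perf{Y}$ along the closed immersion $i : X \hra Y$, which is well defined on perfect complexes because the anticanonical divisor $X$ is a Cartier divisor, hence $i$ is a regular immersion. On moduli this induces, by restriction of pseudo-perfect modules, exactly the restriction map $\M_Y \to \M_X$ of the statement. The task is to equip $i_*$ with a relative left Calabi-Yau structure of dimension $d+1$ in the sense of \cite{cy1}, that is, a relative negative cyclic class $k[d+1] \to \NC{\Perf{Y},\Perf{X}}$ whose boundary recovers the absolute structure $\theta$ on $\C = \Perf{X}$. The source of this class is the anticanonical section: since $X$ is cut out by $s \in K_Y^{-1}$ one has $\O_Y(X) \simeq K_Y^{-1}$, and the adjunction formula gives
\[
K_X \;\simeq\; \bigl(K_Y \otimes \O_Y(X)\bigr)\big|_{X} \;\simeq\; \bigl(K_Y \otimes K_Y^{-1}\bigr)\big|_{X} \;\simeq\; \O_X,
\]
so that the trivialisation of $K_X$ underlying $\theta$ is the one induced by $s$. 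This is exactly the compatibility making $\theta$ the boundary of the relative class, and Theorem \ref{mainthm} then produces the desired Lagrangian structure on $\M_Y \to \M_X$.

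The main obstacle I anticipate is the construction and non-degeneracy of these (relative) Calabi-Yau structures in the singular Gorenstein setting. The delicate points are, first, checking that $\Perf{X}$ with proper support genuinely satisfies the smoothness and finiteness hypotheses under which the main theorem applies --- for $X$ singular this cannot be taken for granted and is where Gorenstein duality must be invoked most carefully --- and, second, verifying that the relative class built from $s$ is non-degenerate and has boundary precisely $\theta$, rather than merely up to the shift and twist coming from the normal bundle $N_{X/Y} \simeq K_Y^{-1}|_X$. Once these are in place the statement is a formal consequence of Theorem \ref{mainthm}, so the entire weight of the corollary rests on translating the commutative adjunction above into the relative negative cyclic homology of the pair $(\Perf{Y}, \Perf{X})$.
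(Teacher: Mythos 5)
There is a genuine gap, and it lies in the choice of dg category. You set $\C = \Perf{X}$ and then flag as a ``delicate point'' the question of whether this category is smooth; but for singular $X$ it simply is not --- $\QCoh{X} = \Ind{\Perf{X}}$ is smooth as a dg category if and only if $X$ is a smooth scheme, so for a genuinely singular Gorenstein $X$ the hypotheses of Theorem \ref{mainthm} fail outright and your argument does not get off the ground. The paper's proof instead takes $\C = \IndCoh{X} = \Ind{\Coh{X}}$, which is smooth for \emph{any} finite type scheme (a theorem of Lunts on the smoothness of $\Coh{X}$), and this is precisely why the corollary extends to the singular Gorenstein setting. The moduli space of objects $\M_{\IndCoh{X}}$ parametrises functors $\Coh{X} \rightarrow \Perf{U}$, and it is these pseudo-perfect $\Coh{X}$-modules that identify, via Grothendieck--Serre duality, with perfect complexes with proper support --- which is how the statement's $\M_{X}$ is to be read. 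Your identification $\M_{\C} \simeq \M_{X}$ via \cite{tv} is therefore attached to the wrong category, and the Gorenstein hypothesis enters not through a duality twist on $\Perf{X}$ but through making the dualising complex a shifted line bundle so that $\theta$ can trivialise it on $\IndCoh{X}$.

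Beyond this, the remaining ingredients you propose to construct by hand --- the absolute Calabi--Yau structure of dimension $d$ from $\theta$ and the relative Calabi--Yau structure of dimension $d+1$ on the pushforward along $i : X \hookrightarrow Y$, with the compatibility coming from the adjunction formula for the anticanonical divisor --- are not re-derived in the paper but are quoted from the prequel: Proposition 5.12 of \cite{cy1} gives the absolute structure on $\IndCoh{X}$, and Theorem 5.13 of \cite{cy1} gives the relative structure on $i_{*} : \IndCoh{X} \rightarrow \IndCoh{Y}$ together with the statement that the induced trivialisation of $K_{X}$ is the one determined by $s$. Once the category is corrected to $\IndCoh{}$ and these inputs are cited, the corollary is, as you say, a direct application of Theorem \ref{mainthm}; but as written your proposal both applies the main theorem to a category that does not satisfy its smoothness hypothesis and leaves the real content (the constructions of \cite{cy1}) as an unverified sketch.
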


In Corollary \ref{stackexact}, we shall show that the notion of relative Calabi-Yau structure and its relation to Lagrangian structures allows us to construct Lagrangian correspondences between moduli spaces of quiver representations, generalising examples known to experts. We record here a special case.

\begin{precor} For a noncommutative Calabi-Yau $(\C,\theta)$ of dimension $d$, there is a Lagrangian
correspondence $$\M_{\C} \times \M_{\C} \leftarrow \M^{\rm ex}_{\C} \rightarrow \M_{\C},$$ 
where $\M^{\rm ex}_{\C}$ is the moduli space of exact triangles in $\C$.
\end{precor}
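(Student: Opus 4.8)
The plan is to exhibit the span as the image, under the moduli functor $\M_{(-)}$, of a single relative Calabi-Yau functor, and then to apply Theorem \ref{mainthm}. Since $\M_{(-)}$ is contravariant and carries coproducts of dg categories to products, one has $\M_{\C\sqcup\C\sqcup\C}\simeq \M_{\C}\times\M_{\C}\times\M_{\C}$. In the sense of shifted symplectic geometry, a Lagrangian correspondence $\M_{\C}\times\M_{\C}\leftarrow \M^{\rm ex}_{\C}\to\M_{\C}$ is precisely a Lagrangian structure on the combined map
\[
\M^{\rm ex}_{\C}\lra \M_{\C}\times\M_{\C}\times\M_{\C},
\]
where the target carries the product of the symplectic form with itself and with its negative, the negated factor being the one recording the total object $B$ of a triangle $F_A\to F_B\to F_C$. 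By Theorem \ref{mainthm}, such a structure will follow from a relative left Calabi-Yau structure on a dg functor $\C\sqcup\C\sqcup\C\to\C^{\rm ex}$ whose boundary recovers the three absolute structures $\theta,\theta,-\theta$, where $\C^{\rm ex}$ denotes the dg category of exact triangles, characterised by $\M_{\C^{\rm ex}}\simeq \M^{\rm ex}_{\C}$.

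First I would set $\C^{\rm ex}=\C\otimes T$, where $T$ is the universal dg category of exact triangles in $\Perf{k}$ — modelled, for instance, on the $A_2$-quiver, whose perfect modules are arrows $F_A\to F_B$ with their canonical cofibre $F_C$. The three `vertex' functors $\ev{A},\ev{C},\ev{B}$ (source, cofibre, total) arise from dg functors $k\to T$, which assemble into $\iota:k\sqcup k\sqcup k\to T$; tensoring with $\id_{\C}$ yields the desired $g=\id_{\C}\otimes\iota:\C\sqcup\C\sqcup\C\to\C\otimes T\simeq\C^{\rm ex}$. Under $\M_{(-)}$ the functor $g$ recovers $(\ev{A},\ev{C},\ev{B})$, and regrouping these as $(\ev{A},\ev{C})$ and $\ev{B}$ gives exactly the span in the statement.

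The heart of the matter is to equip the universal functor $\iota:k\sqcup k\sqcup k\to T$ with a relative left $1$-Calabi-Yau structure whose boundary is the canonical $0$-Calabi-Yau structure on each copy of $k$, with signs $(+,+,-)$; this is the one-dimensional analogue of the pair of pants, a relative $1$-Calabi-Yau structure on the inclusion of the three boundary points into the `$Y$-shaped' triangle category $T$. Granting it, the additivity of Calabi-Yau dimensions under tensor products of dg categories (see \cite{cy1}) upgrades $\id_{\C}\otimes\iota$ to a relative left $(d+1)$-Calabi-Yau structure on $g$, with boundary the absolute $d$-Calabi-Yau structures $\theta,\theta,-\theta$ on the three copies of $\C$. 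Feeding $g$ into Theorem \ref{mainthm} then produces the Lagrangian structure on $\M^{\rm ex}_{\C}\to\M_{\C}\times\M_{\C}\times\M_{\C}$, that is, the asserted Lagrangian correspondence.

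I expect the main obstacle to be the construction and non-degeneracy of the relative $1$-Calabi-Yau structure on $\iota$, together with the attendant bookkeeping: fixing a model of $T$ for which the three vertex functors and the identification $\M_{\C^{\rm ex}}\simeq\M^{\rm ex}_{\C}$ are transparent, pinning down the signs so that the negated factor is the one recording the total object, and checking that the relative cyclic class restricts to the stated boundary data. The cleanest route is presumably to compute $\NC{T}$ in this universal $k$-linear situation, write down the class, and verify non-degeneracy once and for all, after which everything follows formally by tensoring with $(\C,\theta)$ and invoking Theorem \ref{mainthm}; this also realises the statement as the special case of the quiver construction of Corollary \ref{stackexact} attached to the $A_2$ quiver.
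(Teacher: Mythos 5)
Your proposal follows essentially the same route as the paper: the paper realises $\M^{\rm ex}_{\C}$ as $\M_{\Mod{A_2}\otimes\C}$, equips the functor $\amalg_{i=1}^{3}\Vect\to\Mod{A_2}$ (sending the three copies of $k$ to the sub, total, and quotient objects) with a relative $1$-Calabi-Yau structure whose boundary has signs $(+,-,+)$ on the three factors, tensors with $(\C,\theta)$ via the K\"unneth formula to get a relative $(d+1)$-Calabi-Yau structure, and applies Theorem \ref{mainthm} — exactly your plan, including the identification of the statement as the $A_2$ case of Corollary \ref{stackexact}. The one ingredient you flag as the main obstacle, the universal relative $1$-Calabi-Yau structure on $\iota$, is supplied by Theorem 5.14 of the prequel \cite{cy1} and verified in the paper by an explicit computation of the Hochschild exact sequence for $\amalg^{3}\Vect\to\Mod{A_2}$.
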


\begin{rem} Before proceeding, let us mention some related work.  The notion of relative Calabi-Yau structure was introduced in our previous paper, \cite{cy1}, where we announced the theorem above. In \cite{toenreview}, 5.3, To\"en sketches an argument for the particular case of the main theorem when $\C$ is both smooth and proper, and describes a version of the second corollary. In \cite{waikit}, Theorem 4.67, Yeung
proves a version of the main theorem for a certain substack of $\M_{\C}$. In \cite{shendetakeda}, Shende and Takeda develop a local-to-global principle for constructing absolute and relative Calabi-Yau structures on dg categories of interest in symplectic topology and representation theory. Combined with our main theorem, this gives many examples of shifted symplectic moduli spaces and Lagrangians in them coming from non-commutative Calabi-Yaus.
\end{rem}

We now sketch the main constructions involved in establishing the main theorem.

First, by definition of the moduli space $\M_{\C}$, there is a universal functor
\[
\Funi_{\C} : \C^{c} \rightarrow \Perf{\M_{\C}}
\]
from the subcategory of compact objects of $\C$ to perfect complexes on the moduli space ${\M_{\C}}$. Applying the functor of Hochschild chains and taking $S^{1}$-invariants, we obtain a map of negative cyclic chains $\NC{\C^{c}} \rightarrow \NC{\Perf{\M_{\C}}}$. An appropriate version of the Hochschild-Kostant-Rosenberg theorem (Proposition \ref{negtoclos}) provides a projection map $\NC{\Perf{\M_{\C}}} \rightarrow \cldfor{2}{\M_{\C}}{2}$
from negative cyclic chains of $\Perf{\M_{\C}}$ to closed $2$-forms of degree $2$. In particular, from a Calabi-Yau structure of dimension $d$, $\theta: k[d] \rightarrow \NC{\C}$, we obtain a closed $2$-form of degree $2-d$ as the composition
\[
k[d] \stackrel{\theta}\rightarrow \NC{\C} \rightarrow \NC{\Perf{\M_{\C}}} \rightarrow \cldfor{2}{\M_{\C}}{2}.
\]

While the construction of the above closed $2$-form is fairly easy, it requires some work to show that it is non-degenerate. Indeed, much of the paper consists in setting up the theory necessary for computing this $2$-form
in such a way that its non-degeneracy becomes manifest. The computation is broken into a number of steps. 

First, we note that since $\C$ is smooth, the functor $\Funi$ is corepresentable relative to $\Perf{\M_{\C}}$ in the sense that there is a universal object $\Euni_{\C} \in \C^{c} \otimes \Perf{\M_{\C}}$ so that 
$\Funi_{\C}=\Homun{\M_{\C}}{\Euni_{\C}}{-}$. Moreover, there is a form of Serre duality relative to $\Perf{\M_{\C}}$, formulated in terms of the `relative inverse dualising functor' (see Corollary \ref{invserre}), which in the case that $(\C,\theta)$ is a noncommutative Calabi-Yau of dimension $d$ induces a global version of the Serre pairing \ref{spair}:

\begin{equation}\label{globalserre}
\Endun{\M_{\C}}{\Euni_{\C}} ^{\otimes 2}\stackrel{\circ}\rightarrow  \Endun{\M_{\C}}{\Euni_{\C}} \stackrel{\tr}\rightarrow \O_{\M_{\C}}[-d].
\end{equation}

Next, we show (see Proposition \ref{tangmod}) that there is a natural isomorphism of Lie algebras of the shifted tangent complex of $\M_{\C}$ with endomorphisms of $\Euni_{\C}$:
\[\label{lieend}
\Tang{\M_{\C}}[-1] \simeq \Endun{\M_{\C}}{\Euni_{\C}}.
\]
In particular, the shifted tangent complex $\Tang{\M_{\C}}[-1]$ carries not only a Lie algebra structure, but even an associative algebra structure. 

Finally, after a general study of maps of Hochschild chains induced by dg functors, we check that under the identification $\Tang{\M_{\C}}[-1] \simeq \Endun{\M_{\C}}{\Euni_{\C}}$, the pairing \ref{globalserre} agrees with that given by the $2$-form induced by $\theta$. (See Proposition \ref{pforms} in the body of the text.)

We end this introduction with an outline of the structure of the paper, highlighting those points important to the proof of the main theorem.

In Section \ref{sectdgcat}, we introduce notation for dg categories. The two most important points 
are Corollary \ref{corep}, which shows that certain dg functors are corepresentable, and Lemma \ref{invserre},
which shows that the `inverse dualising functor' for a smooth dg category behaves like an `inverse Serre functor'.

In Section \ref{modofobjs}, we introduce some basic objects of derived algebraic geometry, as well as the protagonist of our story, the `moduli space of objects' $\M_{\C}$ in a dg category $\C$. The main result of this section is Theorem \ref{tangmod}, which for nice $\C$ establishes an isomorphism of Lie algebras $\Tang{\M_{\C}}[-1] \simeq \endun{\Euni_{\C}}$, where $\Euni_{\C}$ is the `universal left proper object'. In particular, this endows the shifted tangent complex $\Tang{\M_{\C}}[-1]$ with the structure of associative algebra.  

In Section \ref{traces}, we review the formalism of traces of endofunctors, which we use to describe the functoriality and $S^{1}$-action for Hochschild chains. The most import points are Lemma \ref{hochrig}, which describes how to compute the Hochschild map for a dg functor with smooth source and rigid target,  and Proposition \ref{circacts}, which establishes an $S^{1}$-equivariant isomorphism between functions on the loop space $LU$ of an affine scheme $U$ and Hochschild chains $\HH{\QCoh{U}}$ of the category of quasi-coherent sheaves. 

In Section \ref{sympstr}, we review the theory of closed differential forms in derived algebraic geometry. In Proposition \ref{negtoclos}, we show how to construct closed differential forms on the moduli space $\M_{\C}$ from negative cyclic chains on $\C$, and then prove our main result, Theorem \ref{mainthm}. We conclude by discussing some corollaries and examples.

\medskip

\noindent {\bf Conventions}

For ease of reading, we have adopted some linguistic and notational hacks. For example, $(\infty,1)$-categories are simply called categories, $(\infty,1)$-functors are called functors, and homotopy limits and colimits are called limits and colimits. Similarly for $(\infty,2)$-categories. Certain objects or morphisms, such as adjoints and compositions, are only defined up to a contractible space of choices and we leave this ambiguity implicit.
However, given an $(\infty,1)$-category $\C$ and two objects $x,y \in C$, we do write $\Map{x}{y}$ for the mapping space between them, which should serve as a reminder of what is not explicitly mentioned.
Certain properties, like a morphism being an equivalence or an object in a monoidal category being dualisable, can be checked in the homotopy category and we do not usually mention explicitly the passage to the homotopy category. In particular, we simply call equivalences isomorphisms. Since there are no new $\infty$-categorical notions introduced in this paper, and almost all notions that we use appear in standard references such as \cite{lurHT} and \cite{lurHA}, we hope the reader will not have difficulty in applying these conventions.

\medskip

\noindent {\bf Acknowledgements}
We are grateful to Sasha Efimov, Nick Rozenblyum, Artem Prihodko, Pavel Safronov, and Bertrand To\"en for helpful conversations.

\section{Dualisability and smoothness for dg categories}\label{sectdgcat}

In this section we review some basic definitions and results about dg categories. The main results that we use in later sections are Proposition \ref{coreppre} and Corollary \ref{corep}. 

\subsection{Dualisability in symmetric monoidal categories}

In order to aid later calculations, we give a few definitions and make a few observations about dualisable objects and morphisms between them.

We introduce some notation and recall common notions. Let $\CC$ be a symmetric monoidal category. An object $C \in \CC$ is {\bf dualisable} if there is another object $\C^{\vee}$, together with an evaluation $\ev{\C} : \C^{\vee} \otimes \C \rightarrow \unit{\CC}$ and coevaluation $\co{\C} : \unit{\CC} \rightarrow \C \otimes \C^{\vee}$ satisfying the usual axioms. Given a morphism $f : \C \rightarrow \D$ with dualisable source, the {\bf adjoint morphism} $\varphi: 1 \rightarrow \C^{\vee} \otimes \D$ is given as the composition 
\begin{equation}\label{adjmorph}
1 \stackrel{\co{\C}}\longrightarrow \C \otimes \C^{\vee} \simeq \C^{\vee} \otimes \C \stackrel{\Id{\C^{\vee}} \otimes f}\longrightarrow \C^{\vee} \otimes \D.
\end{equation}
Conversely, given a morphism $\varphi: 1 \rightarrow \C^{\vee} \otimes \D$, we obtain the adjoint morphism $f: \C \rightarrow \D$ as the composition
\begin{equation}\label{unadjmorph}
\C \stackrel{\Id{\C} \otimes \varphi}\longrightarrow \C \otimes \C^{\vee} \otimes \D \simeq \C^{\vee} \otimes \C \otimes \D \stackrel{\ev{\C} \otimes \Id{\D}}\longrightarrow \D.
\end{equation}
Note that these two constructions are inverse to each other. Given a morphism $f : \C \rightarrow \D$ with dualisable source and target, the {\bf dual morphism} $f^{\vee} : 
\D^{\vee} \rightarrow \C^{\vee}$ is given as the composition
\begin{equation}\label{dualmorph}
\D^{\vee} \stackrel{\ev{\C}^{\vee} \otimes \Id{\D^{\vee}}}\longrightarrow \C^{\vee} \otimes \C \otimes \D^{\vee}  \stackrel{\Id{\C^{\vee}} \otimes f \otimes \Id{\D^{\vee}}}\longrightarrow \C^{\vee} \otimes \D \otimes \D^{\vee} \stackrel{\Id{\C^{\vee}} \otimes \co{\D}^{\vee}}\longrightarrow \C^{\vee}.
\end{equation}

\begin{rem}
Note that for a dualisable object $\C$, the evaluation $\ev{\C}$ and coevaluation $\co{\C}$ are dual to each other
after composing with the symmetry $\C^{\vee} \otimes \C \simeq \C \otimes \C^{\vee}$. Moreover, the endomorphism of $\C$ adjoint to $\ev{\C}^{\vee} : \unit{\C} \rightarrow \C^{\vee} \otimes \C$ is nothing but the identity endomorphism $\Id{\C}$.
\end{rem}

\begin{lem}\label{dualise} Consider a symmetric monoidal $2$-category $\CC$.

\begin{enumerate}\label{compadj}

\item Let $\C \stackrel{f }\rightarrow \D$ and $\D \stackrel{g}\rightarrow \C$ be morphisms 
between $1$-dualisable objects in $\CC$. Then we have a natural identification of compositions

\begin{equation}\label{coswap}
\Id{\D^{\vee}} \otimes fg \circ \ev{\D}^{\vee} \simeq g^{\vee} \otimes f \circ \ev{\C}^{\vee}
\end{equation} 

In other words, the adjoint of the composition $\D \stackrel{g}\rightarrow \C \stackrel{f}\rightarrow \D$ can be computed as $g^{\vee} \otimes f \circ \ev{\C}^{\vee}$.

\item More generally, given an endomorphism $F : \C \rightarrow \C$ with adjoint morphism $\Phi : \unit{\CC} \rightarrow \C^{\vee} \otimes \C$, the adjoint of the composition $f F g$
can be computed as $g^{\vee} \otimes f \circ \Phi$.

\item Similarly, we have a natural identification
\begin{equation}\label{evswap}
\ev{\D} \circ g^{\vee} \otimes f = \ev{\C} \circ \Id{\C^{\vee}} \otimes gf,
\end{equation}
both sides being adjoint to $gf$.
\item An adjoint pair $f : \C \leftrightarrow \D :f^{r} $ dualises to an adjoint pair $ (f^{r})^{\vee} : \C^{\vee} \leftrightarrow \D^{\vee} : f^{\vee}$. 
\end{enumerate}
\end{lem}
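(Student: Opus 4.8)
The plan is to prove all four parts within the graphical calculus for symmetric monoidal $2$-categories, where the (co)evaluations are caps and cups, a $1$-morphism is a bead threaded on a strand, and the dual $g^{\vee}$ defined by (\ref{dualmorph}) is the same bead read along the oppositely oriented strand; each identity then becomes a \emph{yanking} move whose content is that the triangle identities transport a bead from one leg of a cup or cap to the other at the cost of dualising it. First I would record two reductions. By the preceding remark, the adjoint of an endomorphism $H$ of $\D$ is $(\Id{\D^{\vee}} \otimes H) \circ \ev{\D}^{\vee}$ and the adjoint of $\Id{\C}$ is $\ev{\C}^{\vee}$; hence (1) is the special case $F = \Id{\C}$ of (2). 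Conversely (2) reduces to (1): writing $\Phi = (\Id{\C^{\vee}} \otimes F) \circ \ev{\C}^{\vee}$ and using functoriality of $\otimes$, one has $(g^{\vee} \otimes f) \circ (\Id{\C^{\vee}} \otimes F) = g^{\vee} \otimes (fF)$, so that $(g^{\vee} \otimes f) \circ \Phi = (g^{\vee} \otimes fF) \circ \ev{\C}^{\vee}$, which is the right-hand side of (1) with $f$ replaced by $fF$. Thus it suffices to treat (1) and (3).

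For (1) I would substitute the definition (\ref{dualmorph}) of $g^{\vee}$ into the right-hand side $(g^{\vee} \otimes f) \circ \ev{\C}^{\vee}$ and then apply the triangle identity for $\D$ together with naturality of the symmetry and of $\otimes$. Geometrically this is the single move that slides the bead $g$ across the cup, simultaneously converting $\ev{\C}^{\vee}$ into $\ev{\D}^{\vee}$ and fusing $g$ with $f$ into $fg$, producing the left-hand side $(\Id{\D^{\vee}} \otimes fg) \circ \ev{\D}^{\vee}$; this is exactly (\ref{coswap}). Part (3), equation (\ref{evswap}), is the mirror image obtained by replacing cups with caps: expanding $\ev{\D} \circ (g^{\vee} \otimes f)$ via (\ref{dualmorph}) and sliding $g$ across the cap $\ev{\D}$ produces $\ev{\C} \circ (\Id{\C^{\vee}} \otimes gf)$, and the assertion that both sides are adjoint to $gf$ is then the defining correspondence between endomorphisms of $\C$ and morphisms $\C^{\vee} \otimes \C \to \unit{\CC}$.

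For (4) I would argue structurally. The assignment $\C \mapsto \C^{\vee}$, $f \mapsto f^{\vee}$, together with its action on $2$-morphisms by whiskering the middle bead in (\ref{dualmorph}), assembles into a symmetric monoidal $2$-functor $(-)^{\vee}$ that reverses the direction of $1$-morphisms while preserving that of $2$-morphisms; this packages the functoriality of duals, $(gh)^{\vee} \simeq h^{\vee} g^{\vee}$ and $(\Id{\C})^{\vee} \simeq \Id{\C^{\vee}}$, at the level of $2$-cells. Applying $(-)^{\vee}$ to an adjunction $f \dashv f^{r}$ with unit $\eta \colon \Id{\C} \Rightarrow f^{r} f$ and counit $\varepsilon \colon f f^{r} \Rightarrow \Id{\D}$ produces $\eta^{\vee} \colon \Id{\C^{\vee}} \Rightarrow f^{\vee} (f^{r})^{\vee}$ and $\varepsilon^{\vee} \colon (f^{r})^{\vee} f^{\vee} \Rightarrow \Id{\D^{\vee}}$, which are a unit and counit exhibiting $(f^{r})^{\vee} \dashv f^{\vee}$; the two triangle identities are the images under $(-)^{\vee}$ of those for $(\eta, \varepsilon)$.

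I expect the main obstacle to be the coherence bookkeeping these moves conceal. In (1) and (3) one must control the associativity and symmetry isomorphisms and the invertible triangle $2$-cells, which in a genuinely weak symmetric monoidal $2$-category are specified only up to coherent homotopy; the string-diagram picture is precisely what licenses suppressing them, and making a chosen computation rigorous amounts to checking that the relevant coherence diagram commutes. For (4) the real work is establishing that $(-)^{\vee}$ is a well-defined $2$-functor---that it respects horizontal and vertical composition of $2$-cells and units up to coherent isomorphism---after which the adjunction transports formally.
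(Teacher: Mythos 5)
Your proof is correct and follows essentially the same route as the paper's own (very brief) sketch: parts (1)--(3) are zigzag/triangle-identity manipulations (the paper inserts a factorisation of $\Id{\C}$ between $f$ and $g$ where you instead expand $g^{\vee}$ via \eqref{dualmorph} and yank on the $\D$-strand, which is the same computation read in the opposite direction), and the reduction of (2) to (1) by absorbing $F$ into $f$ is likewise the paper's argument. The one substantive point where you improve on the sketch is part (4): you correctly record that whiskering makes dualisation covariant on $2$-cells, $\alpha^{\vee}: f_{1}^{\vee} \Rightarrow f_{2}^{\vee}$, which is exactly the variance needed for $\eta^{\vee}$ and $\varepsilon^{\vee}$ to serve as unit and counit of $(f^{r})^{\vee} \dashv f^{\vee}$, whereas the paper's proof states the induced $2$-morphism with the opposite variance.
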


\begin{proof} As these are standard facts, we make only brief remarks on the proofs. 

For 1), using the definition of (co)evaluation,we obtain a factorisation $\Id{\C} \simeq  \co{\C}^{\vee} \otimes \Id{\C} \circ \Id{\C} \otimes \ev{\C}^{\vee}$. Now insert $\Id{\C}$ between
$f$ and $g$, and rearrange, using $g^{\vee} \simeq \Id{\D^{\vee}} \otimes \co{\C}^{\vee} \circ 
\Id{\D^{\vee}} \otimes g \otimes \Id{\C^{\vee}} \circ \ev{\D}^{\vee} \otimes \Id{\C^{\vee}}$.

 For 2), use essentially the same argument as in 1), but replacing $f$ with $Ff$.
 
 For 3), again use the same argument as in 1), but inserting a factorisation of $\Id{\D}$ 
 between $g$ and $f$.
 
 For 4), note that for a $2$-morphism $\alpha: f_{1} \rightarrow f_{2}$, there is a naturally induced $2$-morphism $\alpha^{\vee} : f_{2}^{\vee} \rightarrow f_{1}^{\vee}$. Applying this to the unit and co-unit $f f^{r} \rightarrow \Id{\D}$ and $f^{r} f \rightarrow \Id{\C}$ gives the dualised adjunction.

\end{proof}

\subsection{Presentable dg categories}

In this subsection we discuss the formalism in which we deal with dg categories. Mostly we follow Gaitsgory-Rozenblyum \cite{gr1}. 

$\DGCattwo$ denotes the symmetric monoidal $2$-category of presentable dg categories, continuous dg functors, and dg natural transformations. Here continuous means colimit preserving. The underlying $1$-category, with presentable dg categories as objects and continuous dg functors as $1$-morphisms, is denoted $\DGCat$. We denote by $\fun$ the internal Hom adjoint to tensor product. \footnote{In some sources, $\fun$ is denoted $\fun^{L}$, to emphasise that morphisms preserve colimits.} The unit with respect to the tensor product is the dg category $\Vect$ of dg vector spaces. 

Given a dg category $C \in \DGCat$, we denote its subcategory of compact objects by $C^{c}$. A dg category $C$ is compactly generated if $C=\Ind{C^{c}}$. Note that for any presentable dg category $C$, $C^{c}$ is a small, idempotent complete dg category. The category of such small dg categories is denoted $\dgcat$. 

As a matter of convention, objects of $\DGCat$ shall be called simply `dg categories', while objects of $\dgcat$ shall be called `small dg categories'. Let us emphasise here that in the prequel to this paper \cite{cy1}, we worked with a model for small dg categories $\dgcat$ in terms of small categories enriched over cochain complexes and Morita equivalences between them. In the present paper, it is both more convenient 
and also necessary to work with $\DGCat$, since we to handle not-necessarily compactly generated 
dg categories when dealing with quasi-coherent sheaves on prestacks. 
 
The dualisable objects in  $\DGCat$ ($1$-dualisable objects in $\DGCattwo$) are simply called dualisable dg categories. Concretely, a dg category $\C$ is dualisable if there is another dg category $\C^{\vee}$ and a pairing $\ev{\C} : \C^{\vee} \otimes \C \rightarrow \Vect$ and copairing $\co{\C} : \Vect \rightarrow \C \otimes \C^{\vee}$ satisfying the usual properties. Note that if $\C$ is compactly generated, then it is dualisable with dual $\C^{\vee}=\Ind{(\C^{c})^{op}}$. One shows that $\ev{\C}$ and $\co{\C}$ are dual up to a switch of tensor factors.
Furthermore one shows that for a dualisable dg category, we have a natural equivalence $\C^{\vee} \otimes \D \simeq \Fun{\C}{\D}$, and that under this equivalence, the composition $\Vect \stackrel{\co{\C}}\rightarrow \C \otimes \C^{\vee} \simeq \C^{\vee} \otimes \C \simeq \Fun{\C}{\C}$ sends $k \in \Vect$ to $\Id{\C}$.

Given a continuous dg functor $f : \C \rightarrow \D$ between presentable dg categories (that is, a map in $\DGCat$), the adjoint functor theorem ensures the existence of a formal right adjoint $f^{r} : \D \rightarrow \C$. When the right adjoint $f^{r}$ is itself continuous, we call $f : \C \leftrightarrow \D : f^{r}$ a {\bf continuous adjunction}. When $\C$ and $\D$ are dualisable, passing to duals gives a continuous adjunction $(f^{r})^{\vee}:\C^{\vee} \longleftrightarrow \D^{\vee} : f^{\vee}$, by Lemma \ref{dualise}. One shows that if $\C$ is compactly generated, then a continuous functor $f : \C \rightarrow \D$ has continuous right adjoint if and only $f$ sends compact objects to compact objects.

A dualisable dg category $\C$ is called {\bf proper} if the evaluation functor $\C^{\vee} \otimes \C \stackrel{\ev{\C}}\rightarrow \Vect$ has a continuous right adjoint and is called {\bf smooth} if the evaluation functor has a left adjoint. Equivalently, $\C$ is smooth if the coevaluation functor $ \Vect \stackrel{\co{\C}}\rightarrow \C \otimes \C^{\vee}$ has a continuous right adjoint. Since $\Vect$ is generated by the compact object $k$, $\co{\C}$ has a continuous right adjoint if and only $\co{\C}(k) \in \C \otimes \C^{\vee}$ is compact  if and only if $\Id{\C} \in \Fun{\C}{\C}$ is compact.
(We note in passing that the $2$-dualisable objects in $\DGCattwo$ are precisely the dualisable dg categories $\C$ that are both smooth and proper.)

\subsection{Rigid dg categories and continuous adjunctions}

In this subsection, we review the notion of rigid dg category, following \cite{gr1}, and prove a corepresentability result (Corollary \ref{corep})
for continuous adjunctions between smooth and rigid dg categories. This corepresentability lemma will be important 
for understanding the tangent complex of the moduli space of objects.

By monoidal/symmetric monoidal dg category, we mean an algebra/commutative algebra object in $\DGCat$. 

Given a monoidal dg category $\A$, we denote the tensor product functor by $\mult{\A}: \A \otimes \A \rightarrow \A$, and the unit functor by $-\otimes \unit{\A} : \Vect \rightarrow \A$. Since $\A$ is an algebra object in $\DGCat$, $\mult{\A}$ and $\unit{\A}$ are continuous, hence for every object $a \in \A$, the functors $a \otimes -, - \otimes a : \A \rightarrow \A$ are continuous. 

By $\A$-module category we mean a (left) module $\C$ for $\A$ internal to $\DGCat$. By definition, the action functor $\act{\C} : \A \otimes \C \rightarrow \C$ is continuous. In particular, given any object $c \in \C$, the functor $- \otimes c : \A \rightarrow \C$ is continuous. By the adjoint functor theorem, $- \otimes c$ has a (not necessarily continuous) right adjoint $\Homun{\A}{c}{-} : \C \rightarrow \A$, called `relative Hom'. 

We use the notation 
\[\Endun{\A}{c}:=\Homun{\A}{c}{c}.
\]
$\Endun{\A}{c}$ admits a natural structure of algebra in $\A$. See \cite{lurHA}, 4.7.2.

Given an associative algebra $\alg$ in a monoidal dg category $\A$ and an $\A$-module category $\C$, there is a dg category of $\alg$-modules in $\C$, denoted 
\[
\intmod{\alg}{\C}
\]
The datum of an object $c \in \intmod{\alg}{\C}$ is equivalent to giving an algebra morphism $\alg \rightarrow \Endun{\A}{c}$. 

We shall need the following fact, proved in \cite{gr1}, I.1.8.5.7:

\begin{prop}\label{intmods}
There is an equivalence of categories
\[\mod{\alg} \otimes_\A \C \simeq \intmod{\alg}{\C}.
\]
\end{prop}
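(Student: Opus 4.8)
The plan is to exhibit both sides as monadic over $\C$ for one and the same monad, namely the endofunctor $\alg \otimes -$ of $\C$ given by the action of the algebra $\alg \in \A$. The right-hand side $\intmod{\alg}{\C}$ is monadic over $\C$ essentially by construction: the forgetful functor $U : \intmod{\alg}{\C} \to \C$ admits a left adjoint, the free-module functor $F(c) = \alg \otimes c$, and the Barr--Beck--Lurie theorem (cf. \cite{lurHA}, 4.7) identifies $\intmod{\alg}{\C}$ with modules over the induced monad $T = UF = \alg \otimes -$, the relevant geometric realizations existing and being preserved because everything in sight is continuous. It therefore suffices to produce the same monadic description for $\mod{\alg} \otimes_{\A} \C$.

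For this I would start from the free--forgetful adjunction $F_{0} : \A \leftrightarrow \mod{\alg} : U_{0}$, where $F_{0}(a) = \alg \otimes a$ and $U_{0}$ forgets the $\alg$-action; this exhibits $\mod{\alg}$ as monadic over $\A$. Both functors are maps of right $\A$-module categories, and the unit and counit are $\A$-linear, so $F_{0} \dashv U_{0}$ is an adjunction internal to the $2$-category of right $\A$-module categories. Applying the $2$-functor $- \otimes_{\A} \C$, which preserves adjunctions, and using $\A \otimes_{\A} \C \simeq \C$, I obtain an adjunction $\C \leftrightarrow \mod{\alg} \otimes_{\A} \C$ whose induced monad is $(U_{0}F_{0}) \otimes_{\A} \C$. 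Tracing through the identification $\A \otimes_{\A} \C \simeq \C$, this monad sends $c$ to $\alg \otimes c$ via the $\A$-action, i.e. it is exactly the monad $T$ above.

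To conclude via Barr--Beck--Lurie I must check that $\bar U := U_{0} \otimes_{\A} \C : \mod{\alg} \otimes_{\A} \C \to \C$ is continuous and conservative. Continuity is automatic, since the relative tensor product of continuous functors is continuous. Conservativity is the main obstacle. I would handle it using the bar resolution: every object of $\mod{\alg}$ is the geometric realization of its canonical resolution by free modules, $M \simeq \colim_{\Delta^{\op}} \alg^{\otimes \bullet + 1} \otimes U_{0}(M)$, and since $- \otimes_{\A} \C$ preserves colimits, every object of $\mod{\alg} \otimes_{\A} \C$ is a geometric realization of objects in the image of the free functor $\bar F := F_{0} \otimes_{\A} \C$. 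Since $\bar F$ is left adjoint to $\bar U$ with $\bar U \bar F = T$, conservativity reduces to showing that $\bar U$ reflects equivalences between such realizations, which follows once one knows $\bar U$ detects the bar resolution; this is the genuinely $\infty$-categorical point, where one uses presentability and the interaction of $\bar U$ with geometric realizations.

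Once conservativity is in hand, Barr--Beck--Lurie identifies $\mod{\alg} \otimes_{\A} \C$ with $T$-modules in $\C$, which is precisely $\intmod{\alg}{\C}$; unwinding the equivalence shows it is implemented by the evident balanced functor $M \otimes c \mapsto U_{0}(M) \otimes c$ with its induced $\alg$-module structure. As an alternative to the abstract conservativity check, one can instead define this balanced functor $\Phi : \mod{\alg} \otimes_{\A} \C \to \intmod{\alg}{\C}$ directly, observe that it carries free objects to free objects compatibly with the two free--forgetful adjunctions, and deduce that it is an equivalence from the facts that both sides are generated under geometric realizations by free objects and that $\Phi$ preserves these realizations.
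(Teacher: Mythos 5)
The paper does not actually prove this proposition --- it is imported verbatim from \cite{gr1}, I.1.8.5.7 (a variant of \cite{lurHA}, Theorem 4.8.4.6) --- so there is no in-paper argument to measure yours against. Your double-monadicity strategy is the standard route to statements of this kind, and most of the individual steps are sound: the free--forgetful adjunction for $\mod{\alg}$ is internal to the $2$-category of $\A$-module categories and so descends along the $2$-functor $-\otimes_{\A}\C$; the induced monad on $\A\otimes_{\A}\C\simeq\C$ is indeed $\alg\otimes -$; and the colimit-preservation hypotheses of Barr--Beck--Lurie are automatic in $\DGCat$ because every functor in sight is continuous.

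The gap is exactly where you locate it, and your primary attempt to close it is circular. Conservativity is not preserved by $-\otimes_{\A}\C$ in general, and the observation that every object of $\mod{\alg}\otimes_{\A}\C$ is a geometric realization of free objects says nothing about an arbitrary morphism $f$ with $\bar{U}(f)$ an equivalence; the phrase ``which follows once one knows $\bar{U}$ detects the bar resolution'' is essentially a restatement of what is to be proved. The workable version of your argument is the ``alternative'' you mention at the end, run through the adjunction rather than through conservativity: the comparison functor $\Phi:\mod{\alg}\otimes_{\A}\C\rightarrow\intmod{\alg}{\C}$ admits a left adjoint $\Psi$ (bar resolution), both are continuous (the forgetful functor from $\intmod{\alg}{\C}$ creates all colimits because $\alg\otimes -$ is continuous), the unit and counit are equivalences on free objects, and both sides are generated under colimits by free objects --- for the left-hand side because the relative tensor product is itself a colimit of the categories $\mod{\alg}\otimes\A^{\otimes n}\otimes\C$, each generated by frees. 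Since the full subcategories on which unit and counit are equivalences are closed under colimits, $\Phi$ is an equivalence. Alternatively one can sidestep monadicity of the left-hand side entirely, as Lurie does, by exhibiting right $\alg$-modules as the $\A$-module dual of $\mod{\alg}$ and identifying $\mod{\alg}\otimes_{\A}\C$ with $\A$-linear functors out of right $\alg$-modules, which are computed by evaluation at $\alg$. Either way, I would not leave the conservativity step in its current form; as written it is the one genuinely unproved assertion, and it is the entire content of the theorem.
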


A monoidal dg category $\A$ is called {\bf rigid} if the unit $\unit{\A}$ is compact, the monoidal product $\mult{\A} : \A \otimes \A \rightarrow \A$ has a continuous right adjoint $\mult{\A}^{r}$, and $\mult{\A}^{r}$ is a map of $\A$-bimodules. It is easy to see
that $\ev{\A}:=\Hom{\A}{\unit{\A}}{\mult{\A}(-)} : \A \otimes \A \rightarrow \A \rightarrow \Vect$
induces a self-duality equivalence $\A \simeq \A^{\vee}$. When $\A$ is compactly generated, the condition that $\mult{\A}^{r}$ be a bimodule functor can replaced with the requirement that an object is compact if and only if it admits a left and right dual. See  \cite{gr1}, I.1.9.

If $\C$ is dualisable, then one can show that there is an equivalence of dg categories $\C^{\vee}
\simeq \operatorname{Fun}_{\A}{\C}{\A}$ and that there is an $\A$-linear relative evaluation functor
$\ev{\C/\A} : \C^{\vee} \otimes_{\A} \C \rightarrow \A$ exhibiting $\C^{\vee}$ as the $\A$-module dual of $\C$ (\cite{gr1}, I.1.9.5.4). We say that $\C$ is {\bf smooth over $\A$} if the relative evaluation $\ev{\C/\A}$ has a left adjoint $\evL{\C/\A}$ and 
{\bf proper over $\A$} if there is a continuous right adjoint $\ev{\C/\A}^{r}$.

For a rigid dg category $\A$, the induction-restriction adjunction
\begin{equation}
\label{trivfork}
- \otimes 1_{\A}: \Vect \longleftrightarrow \A : \Hom{\A}{1_{\A}}{-}
\end{equation}
is continuous. Tensoring \ref{trivfork} with a dg category $\C$, we obtain a continuous induction-restriction
functor for $\C$ and $\C_{\A}:=\A \otimes \C$, which for brevity we denote 
\[
i : \C \longleftrightarrow \C_{\A} : i^{r}.
\]
Concretely, we have $i(c)= 1_{\A} \otimes c $ and $i^{r}(a \otimes c) = \Hom{\A}{1_{\A}}{a} \otimes c$.

\begin{lem}\label{indadj} Let $\C$ be a dg category, $\A$ a rigid dg category, $f : \C \leftrightarrow \A : f^{r}$  a continuous adjunction. 
\begin{enumerate}
\item There is an induced, continuous $\A$-linear adjunction
\[\xymatrixcolsep{5pc}\xymatrix{F: \C_{\A} \ar@<1ex>[r] |-{\Id{\A} \otimes f} & \A \otimes \A \ar@<1ex>[l] |-{\Id{\A} \otimes f^{r}} \ar@<1ex>[r] |-{\mult{\A}} & \A \ar@<1ex>[l] |-{\mult{\A}^{r}} :F^{r}.}
\]
\item We have $f \simeq F \circ i$ and $f^{r} \simeq i^{r} \circ F^{r}$. Applying $i$ to the latter and using the unit of the adjunction $i,i^{r}$, we obtain a natural transformation 
\begin{equation*}
i \circ f^{r} \simeq i \circ i^{r} \circ F^{r} \Rightarrow F^{r}.
\end{equation*}
\item Using the above natural transformation and the natural isomorphism $i \circ \Phi \simeq \Id{\A} \otimes \Phi \circ i$ for a continuous endomorphism $\Phi$ of $\C$, we obtain a natural transformation 
\begin{equation*}
f \circ \Phi \circ f^{r} \simeq F \circ i \circ \Phi \circ f^{r} \simeq F \circ \Id{\A} \otimes \Phi \circ i \circ f^{r} \Rightarrow
F \circ \Id{\A} \otimes \Phi \circ F^{r}
\end{equation*}
natural in $\Phi$.
\end{enumerate}
\end{lem}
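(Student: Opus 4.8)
The plan is to treat the whole lemma as bookkeeping with adjunctions and their (co)units, the only genuine input being the rigidity of $\A$, which supplies a continuous, $\A$-bilinear right adjoint $\mult{\A}^{r}$ to the multiplication. For (1), I would first note that $F = \mult{\A} \circ (\Id{\A} \otimes f)$ is a composite of continuous functors: $\Id{\A} \otimes f$ is continuous since $f$ is and $\A \otimes -$ preserves colimits, while $\mult{\A}$ is continuous because $\A$ is an algebra in $\DGCat$. Both are left $\A$-linear for the action on the first tensor factor --- for $\mult{\A}$ this is just the associativity constraint $c(ab) \simeq (ca)b$ --- so $F$ is $\A$-linear. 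To produce the right adjoint I would assemble the two adjunctions $(\Id{\A} \otimes f) \dashv (\Id{\A} \otimes f^{r})$ and $\mult{\A} \dashv \mult{\A}^{r}$: the first has continuous right adjoint because $f^{r}$ is continuous by hypothesis and $\A \otimes -$ preserves continuity, the second because $\A$ is rigid. Composing gives $F^{r} = (\Id{\A} \otimes f^{r}) \circ \mult{\A}^{r}$, continuous and right adjoint to $F$, and its $\A$-linearity is exactly where the rigidity requirement that $\mult{\A}^{r}$ be a bimodule map is used.

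For (2), the isomorphism $f \simeq F \circ i$ is a one-line computation: $i(c) = \unit{\A} \otimes c \mapsto \unit{\A} \otimes f(c) \mapsto \unit{\A} \cdot f(c) \simeq f(c)$, using unitality of $\A$. The companion statement $f^{r} \simeq i^{r} \circ F^{r}$ then follows by uniqueness of right adjoints applied to $f \simeq F \circ i$. Whiskering this by $i$ on the left and composing with the counit $i \circ i^{r} \Rightarrow \Id{\C_{\A}}$ of the adjunction $i \dashv i^{r}$ (whiskered by $F^{r}$) yields the displayed transformation $i \circ f^{r} \simeq i \circ i^{r} \circ F^{r} \Rightarrow F^{r}$.

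For (3), I would first record the isomorphism $i \circ \Phi \simeq (\Id{\A} \otimes \Phi) \circ i$, which holds because both functors send $c$ to $\unit{\A} \otimes \Phi(c)$; invariantly, $i$ tensors by $\unit{\A}$ in the $\A$-slot while $\Phi$ acts in the $\C$-slot, so the two commute, naturally in $\Phi$. The chain in the statement is then obtained by substituting $f \simeq F \circ i$, applying this commutation, and finally whiskering the transformation from (2) by $F \circ (\Id{\A} \otimes \Phi)$ on the left; naturality in $\Phi$ is inherited from each factor.

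Honestly there is no deep obstacle, since the content is entirely formal $2$-categorical manipulation. The single point where a hypothesis rather than pure diagram-chasing intervenes is the continuity and bimodule structure of $\mult{\A}^{r}$ coming from rigidity, which is what keeps the adjunction $F \dashv F^{r}$ inside $\DGCat$ and inside $\A$-modules. The only thing needing mild care is tracking which $2$-morphisms are genuine isomorphisms and which are merely the (co)unit maps, so that the composite transformation in (3) is well defined and manifestly natural in $\Phi$.
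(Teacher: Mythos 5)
Your proposal is correct and follows essentially the same route as the paper, which simply declares the proofs straightforward while noting that $F$ is $\A$-linear by construction and that the $\A$-linearity of $F^{r}$ is where rigidity enters (citing Gaitsgory--Rozenblyum for that last point, exactly the step you isolate). Your spelled-out whiskering in (2) and (3) is the intended argument; the only discrepancy is terminological, in that the $2$-morphism $i\circ i^{r}\Rightarrow \Id{}$ you correctly use is the counit of $i\dashv i^{r}$ even though the paper's statement calls it the unit.
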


\begin{proof} The proofs are straightforward. Let us merely note that $F$ is  $\A$-linear by construction. The fact that its right adjoint $F^{r}$ is also $\A$-linear uses rigidity of $\A$ and is verified in \cite{gr1}, I.9.3.6.
\end{proof}

Next, we specialise to the case of dualisable and smooth sources and rigid target, where standard diagram chases establish the following. 

\begin{prop}\label{coreppre}
Let $\A$ be a rigid symmetric monoidal dg category, $\C$ a dualisable $\A$-module, $F : \C \leftrightarrow \A: F^{r}$ a continuous $\A$-linear adjunction.

\begin{enumerate}
\item Under the self-duality $\A \simeq \A^{\vee}$, the dual functor $F^{\vee}$ identifies with the composition
\begin{equation*}
\A \stackrel{\ev{\C/\A}^{\vee}}\longrightarrow \C^{\vee} \otimes_{\A} \C \stackrel{\Id{\C^{\vee}} \otimes_{\A} F} \longrightarrow \C^{\vee} \otimes_{\A} \A \simeq \C^{\vee}
\end{equation*}
and the dual functor ${F^{r}}^{\vee}$ identifies with the composition
\begin{equation*}
\C^{\vee} \simeq \A \otimes_{\A} \C^{\vee} \stackrel{F^{r} \otimes_{\A} \Id{\C^{\vee}}}\longrightarrow \C \otimes_{\A} \C^{\vee} \stackrel{\co{\C/\A}^{\vee}}\longrightarrow \A
\end{equation*}

\item By definition of dual functor, $F \simeq \ev{\C/\A} \circ F^{\vee} \otimes_{\A} \Id{\C}$. Then using the above computation of $F^{\vee}$, $F$ identifies with the composition
\begin{equation*}
\C \simeq \A \otimes_{\A} \C \stackrel{\ev{\C/\A}^{\vee} \otimes_{\A} \Id{\C}}\longrightarrow \C^{\vee} \otimes_{\A} \C \otimes_{\A} \C \stackrel{\Id{\C^{\vee}} \otimes_{A} F \otimes_{\A} \Id{\C}}\longrightarrow  \C^{\vee} \otimes_{\A} \A \otimes_{\A} \C \simeq \C^{\vee} \otimes_{\A} \C \stackrel{\ev{\C/\A}}\longrightarrow \A.
\end{equation*}

\item If $\C$ is smooth over $\A$, so that $\ev{\C/\A}: \C^{\vee} \otimes_{\A} \C \rightarrow \A$ has a left adjoint
$\evL{\C/\A}$, then we can pass to left adjoints in $F \simeq \ev{\C/\A} \circ F^{\vee} \otimes_{\A} \Id{\C}$ to obtain
a left adjoint $F^{l} \simeq {F^{r}}^{\vee} \otimes_{\A} \Id{\C} \circ \evL{\C/\A}$. Using the above computation of ${F^{r}}^{\vee}$, we find that $F^{l}$ identifies with the composition
\begin{equation*}
\A \stackrel{\evL{\C/\A}}\longrightarrow \C^{\vee} \otimes_{\A} \C \simeq \A \otimes_{\A} \C^{\vee} \otimes_{\A} \C
\stackrel{F^{r} \otimes_{\A} \Id{\C^{\vee}} \otimes_{\A} \Id{\C}}\rightarrow \C \otimes_{\A} \C^{\vee} \otimes_{\A} \C \stackrel{\co{\C/\A}^{\vee} \otimes_{A} \Id{\C}}\rightarrow \C.
\end{equation*}
Inspecting the above composition, we find that
\begin{equation*}
F^{l} \simeq \Id{\C/\A}^{!} \circ F^{r},
\end{equation*}
where $\Id{\C/\A}^{!}$ is adjoint to $\evL{\C/\A}(k) \in \C^{\vee}\otimes_{A} \C$. 

\item When $\C$ is smooth over $\A$, we set $E := F^{l}(1_{\A}) \in \C$ and obtain that $F$ is corepresentable relative to $\A$:
\begin{equation*}
F \simeq \Homun{\A}{E}{-}.
\end{equation*}
\end{enumerate}

\end{prop}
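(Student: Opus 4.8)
The plan is to carry out the general duality calculus of the preceding subsection---the definitions \eqref{adjmorph}, \eqref{unadjmorph}, \eqref{dualmorph} of adjoint and dual morphism together with Lemma~\ref{dualise}---inside the symmetric monoidal $2$-category of (left) $\A$-module categories, whose tensor product is $\otimes_{\A}$ and whose unit is the regular module $\A$. Rigidity of $\A$ ensures that this $2$-category carries the duality theory these facts require (\cite{gr1}), so everything applies verbatim, now decorated relative to $\A$; in particular the relative evaluation and coevaluation of the dualisable module $\C$ are $\ev{\C/\A}$ and $\co{\C/\A}$. The single feature that drives every simplification is that the unit $\A$ is self-dual with \emph{trivial} relative evaluation and coevaluation, both being the structure isomorphism $\A \otimes_{\A} M \simeq M$.

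For (1), I would specialise the three-step formula \eqref{dualmorph} for the dual morphism to $F : \C \to \A$, read in $\A$-modules. The final step, the coevaluation of the target $\A$, collapses, and the remaining $\otimes_{\A} \A$ factors are absorbed by unit isomorphisms, leaving exactly the claimed two-step composite through $\ev{\C/\A}^{\vee}$ and $\Id{\C^{\vee}} \otimes_{\A} F$. The expression for ${F^{r}}^{\vee}$ comes out identically from $F^{r} : \A \to \C$, this time with the evaluation of the source $\A$ collapsing; Lemma~\ref{dualise}(4) also records that $(F^{r})^{\vee}$ is left adjoint to $F^{\vee}$, which I will reuse in (3). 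Part (2) is the dual bookkeeping: $F \simeq \ev{\C/\A} \circ (F^{\vee} \otimes_{\A} \Id{\C})$ is the un-adjunction \eqref{unadjmorph} once the adjoint morphism of $F$ is recognised as $F^{\vee}$ (using $\co{\C/\A} \simeq \ev{\C/\A}^{\vee}$ up to symmetry), and substituting the part~(1) value of $F^{\vee}$ gives the displayed four-step composite.

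For (3) I assume smoothness, so that $\evL{\C/\A}$ exists as the left adjoint of $\ev{\C/\A}$, and pass to left adjoints throughout the part~(2) factorisation, reversing the order of composition. The left adjoint of $F^{\vee} \otimes_{\A} \Id{\C}$ is $(F^{\vee})^{l} \otimes_{\A} \Id{\C}$, and $(F^{\vee})^{l} \simeq (F^{r})^{\vee}$ by Lemma~\ref{dualise}(4); inserting the part~(1) value of $(F^{r})^{\vee}$ produces the displayed composite for $F^{l}$. The final identification $F^{l} \simeq \Id{\C/\A}^{!} \circ F^{r}$ is then an inspection: $\A$-linearity of $\evL{\C/\A}$ lets me transfer the argument to $F^{r}$, and the leftover contraction of $\evL{\C/\A}(k)$ against the result via $\co{\C/\A}^{\vee}$ is by definition the endofunctor $\Id{\C/\A}^{!}$ adjoint to $\evL{\C/\A}(k) \in \C^{\vee} \otimes_{\A} \C$.

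Part (4) is the payoff and, given (3), the quickest step. The functor $F^{l} : \A \to \C$ is $\A$-linear---most cheaply because the explicit composite from (3) is a composite of $\A$-linear functors---and any $\A$-linear functor out of the regular module is determined by its value on the unit, so $F^{l}(a) \simeq a \otimes F^{l}(1_{\A}) = a \otimes E$, that is $F^{l} \simeq - \otimes E$. Passing to right adjoints and invoking the defining adjunction $(- \otimes E) \dashv \Homun{\A}{E}{-}$ of relative Hom, with uniqueness of adjoints, yields $F \simeq \Homun{\A}{E}{-}$. I expect the only real obstacle to be the indexing in (1)--(3): orienting the self-duality $\A \simeq \A^{\vee}$ and tracking the symmetry swaps and unit isomorphisms so that the general formulas collapse to the stated ones, and in particular justifying the transfer in (3) that isolates $F^{r}$. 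These are the ``standard diagram chases'' flagged before the statement---the content is light, but this is where a slot- or swap-error would hide.
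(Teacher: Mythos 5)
Your proposal is correct and follows essentially the same route as the paper, which offers no separate proof beyond declaring these to be ``standard diagram chases'' and embedding the derivation steps in the statement itself: you fill in exactly those chases by running the formulas \eqref{adjmorph}--\eqref{dualmorph} and Lemma~\ref{dualise} inside the symmetric monoidal $2$-category of $\A$-module categories and letting the unit's trivial (co)evaluation collapse the extra factors. The only place you go slightly beyond the paper is in part (4), where you make explicit the step $F^{l} \simeq - \otimes E$ via $\A$-linearity before passing to right adjoints, which is a harmless and correct elaboration.
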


Let $\A$ be a rigid, compactly generated dg category, $\C$ a compactly generated $\A$-module category. An  object $c \in \C$ is called {\bf left proper} over $\A$ if $\Homun{\A}{c}{-}: \C \rightarrow \A$ is continuous with continuous right adjoint, and {\bf right proper} over $\A$ if $\Homun{\A}{-}{c}^{\vee} : \C \rightarrow \A$ is continuous with continuous right adjoint. \footnote{Here, $\Homun{\A}{-}{c}^{\vee}$ is a slight abuse of notation. Strictly speaking, the formula is correct on compact objects, and is then defined everywhere by left Kan extension.}

The functor $\Id{\C/\A}^{!}$ adjoint to $\evL{\C/\A}(k) \in \C^{\vee}\otimes_{A} \C$ is called the {\bf (relative) inverse dualising functor}, since by the following corollary it behaves like an `inverse Serre functor' relative to $\A$.

\begin{cor}\label{invserre}
Let $\C$ be a compactly generated dg category, smooth over a rigid dg category $\A$. Suppose $c \in \C$ is right proper over $\A$, so that the functor $\Homun{\A}{-}{c}^{\vee} : \C \rightarrow \A$ is continuous with continuous right adjoint. Then there is a natural isomorphism of functors
\[
\Homun{\A}{-}{c}^{\vee} \simeq \Homun{\A}{\Inv{\C/\A}(c)}{-}.
\]
In particular, $\Inv{\C/\A}(c)$ is left proper. 

Moreover, applying the above isomorphism to $c$, we have $\Homun{\A}{\Inv{\C/\A}(c)}{c} \simeq \Homun{\A}{c}{c}^{\vee}$. Composing with the dual of the unit $\unit{\A} \rightarrow \Homun{\A}{c}{c}$, we obtain a trace map $\trr{c} : \Homun{\A}{\Inv{\C/\A}(c)}{c} \simeq \Homun{\A}{c}{c}^{\vee} \rightarrow \unit{\A}$. For a compact object $d$, the isomorphism
$\Homun{\A}{d}{c}^{\vee} \simeq \Homun{\A}{\Inv{\C/\A}(c)}{d}$ is induced by the pairing
\[\Homun{\A}{d}{c} \otimes_{\A} \Homun{\A}{\Inv{\C/\A}(c)}{d} \stackrel{\circ}\rightarrow \Homun{\A}{\Inv{\C/\A}(c)}{c} \stackrel{\trr{c}}\rightarrow \unit{\A}. 
\]
\end{cor}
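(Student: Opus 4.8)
The plan is to exhibit $F := \Homun{\A}{-}{c}^{\vee}$ as a functor corepresentable relative to $\A$ by means of Proposition \ref{coreppre}, and then to identify its corepresenting object with $\Inv{\C/\A}(c)$.

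First I would check that $F$ satisfies the hypotheses of Proposition \ref{coreppre}: continuity of $F$ together with continuity of a right adjoint $F^{r}$ is precisely the assumption that $c$ be right proper over $\A$, while $\A$-linearity follows from the routine identity $\Homun{\A}{a \otimes d}{c}^{\vee} \simeq a \otimes \Homun{\A}{d}{c}^{\vee}$ obtained by dualising the (contravariant) $\A$-action on relative Hom. Since $\C$ is smooth over $\A$, part (4) of the proposition then yields $F \simeq \Homun{\A}{E}{-}$ with $E := F^{l}(1_{\A})$, and part (3) identifies $F^{l} \simeq \Id{\C/\A}^{!} \circ F^{r}$, so that $E \simeq \Inv{\C/\A}\bigl(F^{r}(1_{\A})\bigr)$.

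The crux is the computation $F^{r}(1_{\A}) \simeq c$. For compact $d$ I would run the chain of natural equivalences
\[
\Map{F(d)}{1_{\A}} \simeq \Map{\Homun{\A}{d}{c}^{\vee}}{1_{\A}} \simeq \Map{1_{\A}}{\Homun{\A}{d}{c}} \simeq \Map{d}{c},
\]
in which the middle equivalence uses the self-duality of the rigid category $\A$ (giving $\Map{a^{\vee}}{1_{\A}} \simeq \Map{1_{\A}}{a}$ for dualisable $a$) and the last equivalence is the defining adjunction of relative Hom together with $1_{\A} \otimes d \simeq d$. Both outer functors of $d$ are continuous and $\C$ is compactly generated, so the identification propagates from compact objects to all of $\C$; by the adjunction for $F^{r}$ and Yoneda this forces $F^{r}(1_{\A}) \simeq c$. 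Substituting gives $E \simeq \Inv{\C/\A}(c)$ and hence the isomorphism $\Homun{\A}{-}{c}^{\vee} \simeq \Homun{\A}{\Inv{\C/\A}(c)}{-}$. Left-properness of $\Inv{\C/\A}(c)$ is then immediate, since by definition it requires $\Homun{\A}{\Inv{\C/\A}(c)}{-}$ to be continuous with continuous right adjoint, and this functor is $F$, which has those properties by hypothesis.

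For the trace and the pairing I would specialise the isomorphism to the argument $c$, obtaining $\Homun{\A}{\Inv{\C/\A}(c)}{c} \simeq \Endun{\A}{c}^{\vee}$, and define $\trr{c}$ by composing with the $\A$-dual of the algebra unit $\unit{\A} \to \Endun{\A}{c}$. The remaining claim, that for compact $d$ the isomorphism $\Homun{\A}{d}{c}^{\vee} \simeq \Homun{\A}{\Inv{\C/\A}(c)}{d}$ is the one adjoint to composition followed by $\trr{c}$, I would deduce from a naturality argument: the isomorphism of the first part is natural in the $\C$-variable, and postcomposition with a class in $\Homun{\A}{d}{c}$ produces a commuting square comparing its values at $d$ and at $c$; combined with the definition of $\trr{c}$, this identifies the induced pairing with composition-then-trace. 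I expect this last verification to be the main obstacle, as it requires tracking the evaluation and coevaluation identifications of Proposition \ref{coreppre} through the composition of relative Homs, whereas the earlier steps are formal consequences of corepresentability and rigidity.
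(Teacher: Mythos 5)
Your proposal is correct and follows essentially the same route as the paper: both identify $F^{r}(\unit{\A})\simeq c$ by the same chain of adjunction/duality equivalences (yours read in the opposite direction) plus Yoneda, then invoke Proposition \ref{coreppre} to get $F\simeq\Homun{\A}{\Inv{\C/\A}(c)}{-}$, and both dispatch the pairing claim by naturality as for classical Serre functors. Your extra remarks on $\A$-linearity and the explicit statement of left-properness are fine but do not change the argument.
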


\begin{proof}
Let $F=\Homun{\A}{-}{c}^{\vee} : \C \rightarrow \A$. By assumption, $F$ has a continuous right adjoint $F^{r}$. For each compact object $d \in \C$, we have a natural equivalence 
\[
\Hom{\A}{d}{c} \simeq \Hom{\A}{\unit{\A}}{\Homun{\A}{d}{c}} \simeq \Hom{\A}{\Homun{\A}{d}{c}^{\vee}}{\unit{\A}}\simeq \Hom{\A}{F(d)}{\unit{\A}} \simeq \Hom{\A}{d}{F^{r}(\unit{\A})},\] hence by the Yoneda lemma $F^{r}(\unit{\A}) \simeq c$. By Proposition \ref{coreppre}, $F$ also has a left adjoint given as $F^{l}=\Inv{\C/\A} \circ F^{r}$ and $F$ is corepresented by $F^{l}(\unit{\A})$, hence $F \simeq \Homun{\A}{\Inv{\C/\A}(c)}{-}$, as claimed.

The statement about the isomorphism being induced by the pairing follows from naturality of the isomorphism, just as in the case of Serre functors.
\end{proof}

Combining Lemma \ref{indadj} and Proposition \ref{coreppre}, we have the following corepresentability result, which will be essential in understanding the tangent complex of the moduli space of objects $\M_{\C}$ in a smooth dg category $\C$.

\begin{cor}\label{corep}
Let $f : \C \longleftrightarrow \A : f^{r}$ be a continuous adjunction with smooth source and rigid target. Then the induced functor
\[F=f_{\A} : \C_{\A} \rightarrow \A 
\]
has a left adjoint $F^{l}$ and $F$ is corepresented by the compact object $E=F^{l}(\unit{\A}) \in \C_{\A}$:
\[
F \simeq \Homun{\A}{E}{-}.
\]
We have isomorphisms
\begin{align}\label{endisos}
& FF^{l}(\unit{\A}) \simeq F \Inv{\C/\A} F^{r}(\unit{\A}) \simeq  \Endun{\A}{E}  \\
& FF^{r}(\unit{\A}) \simeq \Homun{\A}{E}{F^{r}(\unit{\A})} \simeq  \Endun{\A}{E}^{\vee}  
\end{align}
\end{cor}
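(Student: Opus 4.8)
The plan is to combine Lemma \ref{indadj} with Proposition \ref{coreppre}: first convert the given adjunction into an $\A$-linear one by base change, then run the relative corepresentability machinery over $\A$. Concretely, I would feed the continuous adjunction $f : \C \longleftrightarrow \A : f^{r}$ into Lemma \ref{indadj}(1); because $\A$ is rigid, this yields a continuous, $\A$-linear adjunction
\[
F = f_{\A} : \C_{\A} \longleftrightarrow \A : F^{r},
\]
in which $\C_{\A} = \A \otimes \C$ is regarded as an $\A$-module category and $F^{r}$ is again continuous. This is exactly the setup of Proposition \ref{coreppre}, with $\C_{\A}$ in the role of the dualisable $\A$-module and $F$ the continuous $\A$-linear functor.

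The one substantive point to check before quoting Proposition \ref{coreppre} is that $\C_{\A}$ is a \emph{dualisable} $\A$-module that is moreover \emph{smooth over} $\A$; this is the only place the hypothesis that $\C$ is smooth enters. It amounts to the stability of smoothness (hence dualisability) under base change along $\Vect \longrightarrow \A$: the relative dual is $\C_{\A}^{\vee} \simeq \A \otimes \C^{\vee}$, the relative evaluation is $\ev{\C_{\A}/\A} \simeq \Id{\A} \otimes \ev{\C}$, and a left adjoint $\evL{\C_{\A}/\A} \simeq \Id{\A} \otimes \evL{\C}$ exists precisely because $\evL{\C}$ does. I expect this base-change verification to be the main --- though essentially formal --- obstacle, since everything downstream is bookkeeping with the structures it produces.

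Granting this, Proposition \ref{coreppre}(3) supplies a left adjoint $F^{l} \simeq \Inv{\C/\A} \circ F^{r}$, and part (4) identifies $F$ as corepresented by the compact object $E := F^{l}(\unit{\A})$, i.e. $F \simeq \Homun{\A}{E}{-}$; here $E$ is compact because $F^{l}$ preserves compact objects (its right adjoint $F$ being continuous) and $\unit{\A}$ is compact by rigidity. This gives the first assertions of the corollary, and the first line of \ref{endisos} is then formal: from $F^{l} \simeq \Inv{\C/\A} \circ F^{r}$ we get $E \simeq \Inv{\C/\A} F^{r}(\unit{\A})$, so applying $F = \Homun{\A}{E}{-}$ yields $F F^{l}(\unit{\A}) \simeq F \Inv{\C/\A} F^{r}(\unit{\A}) \simeq \Homun{\A}{E}{E} = \Endun{\A}{E}$.

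For the second line, I would set $c := F^{r}(\unit{\A})$, so that $E \simeq \Inv{\C/\A}(c)$ and $F F^{r}(\unit{\A}) \simeq \Homun{\A}{E}{c}$ immediately. To rewrite this I would invoke the relative inverse-Serre duality of Corollary \ref{invserre} applied to $c$ (whose right-properness over $\A$ is part of the same base-change bookkeeping): it gives a natural isomorphism $\Homun{\A}{-}{c}^{\vee} \simeq \Homun{\A}{\Inv{\C/\A}(c)}{-} = \Homun{\A}{E}{-}$ of functors. Evaluating this at the compact object $E$ yields $\Homun{\A}{E}{c}^{\vee} \simeq \Homun{\A}{E}{E} = \Endun{\A}{E}$, and dualising over the rigid category $\A$ (the objects involved being dualisable) produces $\Homun{\A}{E}{c} \simeq \Endun{\A}{E}^{\vee}$, completing \ref{endisos}.
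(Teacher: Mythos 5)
Your proposal is correct and matches the paper's own (implicit) argument: the paper gives no separate proof of Corollary \ref{corep}, stating only that it follows by combining Lemma \ref{indadj} with Proposition \ref{coreppre}, which is exactly your route, with the base-change verification that $\C_{\A}$ is dualisable and smooth over $\A$ and the appeal to Corollary \ref{invserre} filling in details the paper leaves to the reader. The identification $F F^{r}(\unit{\A}) \simeq \Endun{\A}{E}^{\vee}$ can also be seen directly from $\A$-linearity of $F$ and dualisability of the compact object $F(E)$, but your use of Corollary \ref{invserre} is equally valid.
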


We end this section with a computation that will be useful later for computing fibres of certain canonical perfect complexes on the moduli space of objects in a dg category.

\begin{lem}\label{homfibre}
Let $\C$ be a dg category, $\A$ a rigid dg category, and $\varphi : \A \longleftrightarrow \Vect: \varphi^{r}$ an adjunction with $\varphi$ a symmetric monoidal dg functor. Then for objects $E_{1},E_{2} \in \C_{\A}=\A \otimes \C$,
we have a natural isomorphism
\[
\varphi \Homun{\A}{E_{1}}{E_{2}} \simeq \Hom{\C}{(\varphi \otimes \Id{\C})(E_{1})}{(\varphi \otimes \Id{\C})(E_{2})}.
\]
\end{lem}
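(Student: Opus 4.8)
The plan is to produce the comparison morphism as a Beck--Chevalley mate and then verify it is an equivalence by reduction to compact generators, where both sides are computed by hand. Write $\psi := \varphi \otimes \Id{\C} : \C_{\A} \to \C$; since $\varphi$ is symmetric monoidal, $\psi$ is linear over $\varphi$, i.e. $\psi(a \otimes E) \simeq \varphi(a) \otimes \psi(E)$ for $a \in \A$ acting on $E \in \C_{\A}$. Consequently the square with horizontal functors $- \otimes E_{1} : \A \to \C_{\A}$ and $- \otimes \psi(E_{1}) : \Vect \to \C$ and vertical functors $\varphi$ and $\psi$ commutes. Passing to the right adjoints of the horizontal functors, which are the relative Homs $\Homun{\A}{E_{1}}{-}$ and $\Hom{\C}{\psi(E_{1})}{-}$, the mate of this commuting square is a natural transformation $\varphi \circ \Homun{\A}{E_{1}}{-} \Rightarrow \Hom{\C}{\psi(E_{1})}{\psi(-)}$; evaluating at $E_{2}$ gives the candidate isomorphism. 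Equivalently, one builds it by applying $\psi$ to the counit $\Homun{\A}{E_{1}}{E_{2}} \otimes E_{1} \to E_{2}$ and using the $\varphi$-linearity of $\psi$.

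Second, I would reduce to the case where $E_{1}$ is compact and both $E_{1}, E_{2}$ are of the form $a \otimes c$ with $a \in \A$ and $c \in \C$ compact, since such objects generate $\C_{\A}$ under colimits. The role of taking $E_{1}$ compact is that then $- \otimes E_{1}$ preserves compactness, so $\Homun{\A}{E_{1}}{-}$ is continuous; hence both $\varphi \circ \Homun{\A}{E_{1}}{-}$ and $\Hom{\C}{\psi(E_{1})}{\psi(-)}$ (note that $\psi(E_{1})$ is then compact in $\C$) are continuous in $E_{2}$, and it suffices to compare them on the generators $E_{2} = a_{2} \otimes c_{2}$.

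Third, the computation on generators. For $E_{1} = a_{1} \otimes c_{1}$ with $a_{1}$ dualisable and $c_{1}$ compact, the adjunction defining relative Hom, together with the K\"unneth formula for mapping complexes (valid since $c_{1}$ is compact) and dualisability of $a_{1}$, gives $\Homun{\A}{a_{1} \otimes c_{1}}{a_{2} \otimes c_{2}} \simeq (a_{2} \otimes a_{1}^{\vee}) \otimes \Hom{\C}{c_{1}}{c_{2}}$. Applying $\varphi$ and using that $\varphi$ is symmetric monoidal, so $\varphi(a_{1}^{\vee}) \simeq \varphi(a_{1})^{\vee}$, and $\Vect$-linear yields $\varphi(a_{2}) \otimes \varphi(a_{1})^{\vee} \otimes \Hom{\C}{c_{1}}{c_{2}}$. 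On the other side $\psi(a_{i} \otimes c_{i}) \simeq \varphi(a_{i}) \otimes c_{i}$, and since $\varphi(a_{1})$ is dualisable in $\Vect$ and $c_{1}$ is compact, $\Hom{\C}{\varphi(a_{1}) \otimes c_{1}}{\varphi(a_{2}) \otimes c_{2}} \simeq \varphi(a_{1})^{\vee} \otimes \varphi(a_{2}) \otimes \Hom{\C}{c_{1}}{c_{2}}$. The two agree, and one checks that the mate of the first paragraph induces exactly this identification.

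The main obstacle is the passage beyond compact $E_{1}$, and here compactness is genuinely needed: the left-hand side applies $\varphi$ to the relative Hom, which sends colimits in $E_{1}$ to limits, whereas $\varphi$, though continuous, need not preserve limits. Accordingly I would either restrict attention to compact $E_{1}$ --- which is the case actually used, namely fibres of perfect, hence compact, complexes --- or impose a properness condition on $\varphi$ guaranteeing it preserves the relevant limits. Away from this point, the proof is precisely the mate construction together with the one-line computation on generators above.
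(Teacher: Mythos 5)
Your mate construction of the comparison map and your computation on generators of the form $a\otimes c$ are both fine as far as they go, but the argument as written only establishes the isomorphism for compact $E_{1}$, whereas the lemma is stated---and the paper proves it---for arbitrary $E_{1},E_{2}\in\C_{\A}$. The idea you are missing is that rigidity of $\A$ lets you avoid ever commuting $\varphi$ past a limit. Since $\varphi$ is symmetric monoidal, $\Vect$ becomes an $\A$-module via $\varphi$, and by rigidity (\cite{gr1}, I.9.3.6) the right adjoint $\varphi^{r}$ is automatically $\A$-linear; hence $\varphi^{r}\varphi$ is an $\A$-linear endofunctor of $\A$ and is therefore determined by its value on the unit, $\varphi^{r}\varphi\simeq \varphi^{r}(k)\otimes -$. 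This converts $\varphi$ into a corepresented functor, $\varphi(V)\simeq \Hom{\A}{\unit{\A}}{\varphi^{r}(k)\otimes V}$. Taking $V=\Homun{\A}{E_{1}}{E_{2}}$, using the $\A$-linearity of relative Hom to move $\varphi^{r}(k)$ inside, and then the defining adjunction of relative Hom, one obtains
\begin{equation*}
\varphi\Homun{\A}{E_{1}}{E_{2}}\simeq \Hom{\A}{\unit{\A}}{\Homun{\A}{E_{1}}{\varphi^{r}(k)\otimes E_{2}}}\simeq \Hom{\C_{\A}}{E_{1}}{\varphi^{r}(k)\otimes E_{2}}\simeq \Hom{\C}{\psi(E_{1})}{\psi(E_{2})},
\end{equation*}
where $\psi=\varphi\otimes\Id{\C}$ and the last step uses $\psi^{r}\psi\simeq\varphi^{r}(k)\otimes -$ by the same token. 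No generator resolution of $E_{1}$, and hence no continuity of $\Homun{\A}{E_{1}}{-}$ and no compactness of $E_{1}$, enters.

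Consequently your closing diagnosis---that compactness of $E_{1}$ is ``genuinely needed'' and the lemma should be weakened---is not correct: the obstruction you identify ($\varphi$ failing to preserve the limits that appear when $E_{1}$ is resolved by generators) is an artifact of your reduction strategy, not of the statement, and it is precisely what the hypothesis that $\A$ is rigid and $\varphi$ is symmetric monoidal is there to circumvent. It is true that the only place the lemma is invoked (the fibre computation in Lemma \ref{tangmap}) concerns a compact object, so your weaker statement would suffice for the application; but as a proof of the lemma as stated the proposal is incomplete, and if you wish to keep your approach you should replace the generator reduction in the $E_{1}$ variable by the adjunction manipulation above.
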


\begin{proof}
First, let us note that $\Vect$ becomes an $\A$-module via $\varphi$ and that with respect to this $A$-module structure
$\varphi^{r}$ is $\A$-linear. Hence the endofunctor $\varphi^{r} \varphi$ of $\A$ is $\A$-linear and so determined by its action on $\unit{\A}$, giving an isomorphism of functors
\[\varphi^{r} \varphi \simeq \varphi^{r}(k) \otimes -.
\]
Using this isomorphism, adjunction, and $A$-linearity of internal Hom, we obtain the following sequence of isomorphisms:

\begin{gather*}
\varphi \Homun{\A}{E_{1}}{E_{2}} \simeq \Hom{k}{\varphi(\unit{A})}{\varphi \Homun{\A}{E_{1}}{E_{2}} } \simeq \Hom{\A}{\unit{\A}}{\varphi^{r} \varphi \Homun{\A}{E_{1}}{E_{2}}} \simeq \\
\Hom{\A}{\unit{\A}}{\varphi^{r}(k) \otimes \Homun{\A}{E_{1}}{E_{2}}} \simeq \Hom{\A}{\unit{\A}}{\Homun{\A}{E_{1}}{\varphi^{r}(k) \otimes E_{2}}} \simeq \\ 
\Hom{\C_{\A}}{E_{1}}{\varphi^{r}(k) \otimes E_{2}} \simeq  \Hom{\C}{(\varphi \otimes \Id{\C})(E_{1})}{(\varphi \otimes \Id{\C})(E_{2})}.
\end{gather*}

\end{proof}

\section{The moduli space of objects}\label{modofobjs}

\subsection{Quasi-coherent and ind-coherent sheaves on affine schemes}\label{qcaff}

We review some basic notions in derived algebraic geometry that we shall need later, mostly following \cite{gr1}, Chapters 2-6. For more subtle points, we give precise references.

For now on, we take $k$ be a field of characteristic $0$.

By definition, the category of (derived) {\bf affine schemes} $\Aff$ is opposite to the category $\calg^{\leq 0} \subset \calg$ of connective commutative algebras in $\Vect$. \footnote{Since we are working in characteristic $0$, it is possible to model $\calg^{\leq 0}$ in terms of cohomologically non-positive commutative differential graded algebras. See \cite{lurHA}, Proposition 7.1.4.11.} 

An affine scheme $U=\Spec{R}$ is said to be of {\bf finite type} over the ground field $k$ if $H^{0}(R)$ is finitely generated as a commutative algebra over $k$, $H^{i}(R)$ is finitely generated as a module over $H^{0}(R)$, and $H^{-i}(R)=0$ for $i>>0$. The category of affine schemes of finite type is denoted $\Aft$.

By definition, the dg-category of {\bf quasi-coherent sheaves} $\QCoh{U}$ on an affine scheme $U=\Spec{R}$ is the 
dg category of dg modules over the commutative algebra $R$. Given a map $f: U \rightarrow V$, the pullback functor $f^{*} : \QCoh{V} \rightarrow \QCoh{U}$ is given by induction of modules along the corresponding map of rings. As such, $f^{*}$ is symmetric monoidal. The naturality of pullback is expressed via a functor
\[
\QCoh{-}^{*} : \Aff^{\rm op} \rightarrow \DGCat
\]

Since we are so far considering only affine schemes, $f^{*}$ always has a continuous right adjoint $f_{*}$. 

One can show that $\QCoh{U}$ is a rigid symmetric monoidal dg category, and in particular that $\otimes$-dualisable objects coincide with compact objects. In this case, the structure sheaf $\O_{U}$, corresponding to the ring $R$, is a compact generator. The compact objects in $\QCoh{U}$ are called {\bf perfect complexes}, which form a small idempotent complete dg category denoted $\Perf{U}$. 
They are preserved by pullback. In the present affine case, we therefore have $\Ind{\Perf{U}}=\QCoh{U}$.

Given a pullback square of affine schemes
\begin{equation}\label{pullback}
\xymatrix{
U \times_{W} V \ar[d]^{q} \ar[r]^{p} & V \ar[d]^{g} \\
U \ar[r]^{f} & W
}
\end{equation}
naturality of pullback gives an isomorphism $q^{*}f^{*} \simeq p^{*}g^{*}$, so by adjunction we obtain a {\bf base-change map} 
\begin{equation}\label{basechange}
f^{*}g_{*} \rightarrow q_{*}p^{*},
\end{equation}
which is easily checked to be an isomorphism by considering its action on the generator $\O_{V} \in \QCoh{V}$. 

For affine schemes $U=\Spec{R}$ of finite type, define the small subcategory $\Coh{U} \subseteq \QCoh{U}$ of {\bf coherent sheaves} to consist of quasi-coherent sheaves with bounded, finitely generated cohomology: $F \in \Coh{U}$ if $H^{i}(F)$ is finitely generated over $H^{0}(R)$ and $H^{i}(F)=0$ for $|i|>>0$. The dg category of {\bf ind-coherent sheaves} is defined to be the ind-completion of the category of coherent sheaves:
\[\IndCoh{U}:= \Ind{\Coh{U}}.
\]
The category of ind-coherent sheaves $\IndCoh{U}$ is a module category for quasi-coherent sheaves $\QCoh{U}$,
with the action given by ind-completion of the action of $\Perf{U}$ on $\Coh{U}$. For a map of affine schemes of finite type $f : U \rightarrow V$, there is a functor \footnote{For an `elementary' definition of $f^{!}$, see \cite{gr1}, II.5.4.3.}
\[f^{!} : \IndCoh{V} \rightarrow \IndCoh{U}.
\]
More precisely, we have a functor
\[
\IndCoh{}^{!} : \Aft^{\rm op} \rightarrow \DGCat,
\]

$\IndCoh{U}$ has a natural symmetric monoidal structure, the product of which is denoted $\otimes^{!}$, and the unit of which is $\omega_{U}:=p^{!}(k)$ for $p: U \rightarrow *$. Using the action of $\QCoh{U}$ on $\IndCoh{U}$, tensoring with $\omega_{U}$ gives a symmetric monoidal functor 
\[
\Upsilon_{U}={-}\otimes \omega_{U} : \QCoh{U} \rightarrow \IndCoh{U}.
\]
The functor $\Upsilon$ intertwines $*$-pullback and $!$-pullback: $\Upsilon_{U}f^{*} \simeq f^{!} \Upsilon_{V}$.

More precisely, $\Upsilon$ is a natural transformation
\[
\Upsilon: \QCoh{-}^{*} \rightarrow \IndCoh{-}^{!}
\]
of functors from $\Aft^{\rm op}$ to $\DGCat$.

There is a self-duality equivalence $\IndCoh{U} \simeq \IndCoh{U}^{\vee}$. The corresponding equivalence between compact objects is denoted 
\[
\Dual{U}{-}=\Coh{U}^{\rm op} \simeq \Coh{U}
\]
One can show that there is an isomorphism of functors $\Dual{U}{-} \simeq \Homun{\QCoh{U}}{-}{\omega_{U}}$. The functor $\Dual{U}{-}$ can be used to define a contravariant {\bf Grothendieck-Serre duality} functor
\begin{equation}\label{contradual}
{\QCoh{U}^{-}}^{op} \rightarrow \Funex{\Coh{U}^{op}}{\Vect} \simeq \IndCoh{U}
\end{equation}
given explicitly by $E \mapsto \Hom{\QCoh{U}}{E}{\Dual{U}{-}}$ \footnote{Here, $\QCoh{U}^{-}$ denotes
quasi-coherent sheaves that are cohomologically bounded above. For more on Grothendieck-Serre duality, see \cite{gr2}, I.1.3.4.}. If $E$ is a perfect complex, then for any $F \in \Coh{U} \subset \IndCoh{U}$, we have isomorphisms
\[
\Hom{\QCoh{U}^{-}}{E}{\Dual{U}{F}} \simeq
\Hom{\IndCoh{U}}{E \otimes F}{\omega_{U}} \simeq \Hom{\IndCoh{U}}{F}{E^{\vee} \otimes \omega_{U}}
\]
hence the functor \ref{contradual} is given on perfect complexes by 
\begin{equation}\label{perfdual}
E \mapsto \Upsilon(E^{\vee}).
\end{equation} 
In particular, it is symmetric monoidal and fully faithful when restricted to perfect complexes. More generally,
one can show that $\Dual{U}{-}$ is fully faithful on bounded above quasi-coherent sheaves having coherent cohomology sheaves.










\subsection{Prestacks and the moduli of objects}

In this subsection, we fix notation by reviewing some basic constructions concerning prestacks and dg categories of sheaves on prestacks. Our basic reference is \cite{gr1},\cite{gr2}.

We denote by $\Prstk := \Fun{\calg^{\leq 0}}{\Spc}$ the category of prestacks on $\Aff$. Being a topos, $\Prstk$ is cocomplete, Cartesian closed, and colimits commute with pullbacks. We denote the internal/local mapping space adjoint to $X \times -$ by $\Mapun{X}{-}$, and the global mapping space by $\Map{X}{-}$. Moreover, there is a continuous faithful embedding $\Spc \hookrightarrow \Prstk$ sending a space $K$ to the constant prestack with value $K$. 

The embedding $\Spc \hookrightarrow \Prstk$ is symmetric monoidal for the Cartesian monoidal structures, so (abelian) groups in $\Spc$ map to (abelian) groups in $\Prstk$. We shall be especially interested in the circle group $B\ZZ=S^{1}$. 

\begin{defi}
Given a prestack $X$, its free loop space $LX$ is by definition the mapping prestack $\Mapun{S^{1}}{X}$. 
\end{defi}

The free loop space $LX$ carries a natural action of the circle group $S^{1}$, which we call `loop rotation'. Decomposing a circle into two intervals and using the fact that mapping out of a colimit gives a limit, we obtain an
isomorphism of the free loop space with the self-intersection of the diagonal:
\[
LX \simeq X \times_{X \times X} X
\]
In particular, if $X$ is affine, then the free loop space is again affine.

Mostly we shall be interested in prestacks that are {\it laft} (locally almost of finite type) and {\it def} (`have deformation theory'). Roughly, a prestack $X$ is {\bf laft} if it is determined by maps $U \rightarrow X$ with $U$ an affine of finite type, and is {\bf def} if it has a (pro-)cotangent complex $\Cotang{X}$ that behaves as expected. See the next section for what we expect of a (pro-)cotangent complex. 

Recall from Section \ref{qcaff} the functor of quasi-coherent sheaves on affine schemes:
\[
 \QCoh{-}^{*} : \Aff^{\rm op} \rightarrow \DGCat
\]

Taking the right Kan extension of $\QCoh{-}^{*}$, we obtain a functorial notion of quasi-coherent sheaves on general prestacks:
\[
\QCoh{-}^{*}:\Prstk^{\rm op} \rightarrow \DGCat.
\]
Since every prestack $X$ is tautologically a colimit over all affines mapping into it, $X=\colim_{\Aff/X} U$, we have by definition an identification
\[
\QCoh{X} = \lim_{(\Aff/X)^{op}} \QCoh{U}.
\]

For each map of prestacks $f : X \rightarrow Y$, we have by definition a pullback functor $f^{*} : \QCoh{Y} \rightarrow \QCoh{X}$. The adjoint functor theorem provides a right adjoint, denoted $f_*$, but in general
it can be poorly behaved. However, for `qca' morphisms $f$, $f_*$ is continuous and satisfies base change and the projection formula for pullbacks along maps of affines $U \rightarrow Y$ (see Corollary 1.4.5 \cite{dringaits}). A morphism $f: X \rightarrow Y$ is qca if the pullback of $X$ along a map from any affine $U \rightarrow Y$ is a nice Artin $1$-stack with affine stabilisers. This will be obvious in the situations where we need it.

One can similarly define perfect complexes on a prestack by right Kan extension from affines, so that in particular we have an identification
\[
\Perf{X} = \lim_{(\Aff/X)^{op}} \Perf{U}.
\]
For a general prestack $X$, perfect complexes need not be compact as objects in $\QCoh{X}$, but they always identify with the subcategory of $\otimes$-dualisable objects in $\QCoh{X}$. In particular, $\QCoh{X}$ is not always rigid, nor even dualisable in $\DGCat$. It shall therefore be convenient for us to formally introduce the category of {\bf ind-perfect sheaves} $\Ind{\Perf{X}}$. Note that by construction $\Ind{\Perf{X}}$ is compactly generated and that pullback preserves compact objects, hence for a map of prestacks $f : X \rightarrow Y$, we have a continuous adjunction
\[
f^{*}: \Ind{\Perf{Y}} \longleftrightarrow \Ind{\Perf{X}}: f_{*}
\]

Similarly, for a general laft prestack $X$, the category of ind-coherent sheaves is defined as the limit along $!$-pullback over all finite type affine schemes mapping to $X$:
\[
\IndCoh{X}:= \lim_{(\Aft/X)^{\rm op}} \IndCoh{U}^{!}.
\]
For a map of laft prestacks $f : X \rightarrow Y$, we have an evident pullback functor $f^{!} : \IndCoh{Y} \rightarrow \IndCoh{X}$ and a natural transformation $\Upsilon : \QCoh{-}^{*} \rightarrow \IndCoh{-}^{!}$ of functors from $\Prlft^{\rm op}$ to $\DGCat$ given at a laft-prestack $X$ by tensoring with $\omega_{X}$.

\begin{rem}
For maps of laft prestacks $f : X \rightarrow Y$ that are sufficiently algebraic, one can define a pushforward functor $f_{*} : \IndCoh{X} \rightarrow \IndCoh{Y}$. Beware, however, that unless $f$ is proper, $f^{!}$ is {\it not} right adjoint to $f_{*}$. Nonetheless, one of the main results of \cite{gr1},\cite{gr2} is that $*$-pushforward satisfies base-change with respect to $!$-pullback.
\end{rem}

We can now define the main object of interest for this paper.  

\begin{ex}
The {\bf moduli space of objects} $\M_{\C}$ in a compactly generated dg category $\C$ is the prestack given on an affine $U$ by
\[
\M_{\C}(U)=\Mapp{\dgcat}{\C^{c}}{\Perf{U}}.
\]
Note that $\Mapp{\dgcat}{\C^{c}}{\Perf{U}}$ is the space of exact functors $\C^{c} \rightarrow \Perf{U}$ from compact objects in $\C$ to perfect complexes on $U$. Equivalently, we could consider the space of continuous adjunctions $\C \leftrightarrow \QCoh{U}$. 

When $\C$ is smooth, Corollary \ref{corep} ensures that functors $F: \C^{c} \rightarrow \Perf{U}$ are precisely those 
co-represented by left proper objects $E \in \C^{c} \otimes \Perf{U}$, hence the (somewhat inaccurate) name `moduli space of objects'. In particular, a $k$-point $x : \Spec{k} \rightarrow \M_{\C}$ classifies a functor
\[
\varphi_{x}: \C^{c} \rightarrow \Perf{k},
\] 
and when $\C$ is smooth, this functor is corepresented by $\Inv{\C}\varphi^{r}_{x}(k)$. By Serre duality, we have $\Hom{\C}{\Inv{\C}\varphi^{r}_{x}(k)}{y} \simeq \Hom{\C}{y}{\varphi^{r}_{x}(k)}^{*}$ naturally in compact objects $y \in \C^{c}$, hence we have an isomorphism of functors $\varphi_{x} \simeq \Hom{\C}{-}{\varphi^{r}_{x}{k}}^{*}$. Our convention is to identify the point $x$ with the right proper object $\varphi^{r}(k)$, so that we have an isomorphism of functors
\begin{equation}\label{kpoint}
\varphi_{x} \simeq \Hom{\C}{-}{x}^{*}.
\end{equation}

By definition of the moduli space, there is a universal exact functor $\C^{c} \rightarrow \Perf{\M_{\C}}$, or equivalently, a universal continuous adjunction
\[
\Funi_{\C} : \C \longleftrightarrow \Ind{\Perf{\M_{\C}}} : \Funi^{r}_{\C}, 
\] 
so that given a continuous adjunction $F : \C \longleftrightarrow \QCoh{U} : F^{r}$ corresponding
to a morphism $f : U \rightarrow \M_{\C}$, we have an isomorphism
\[
f^{*}\Funi_{\C} \simeq F.
\]

By Corollary \ref{corep}, the universal functor $\Funi_{\C}$ is corepresented by a left proper object 
\[
\Euni_{\C} \in \Ind{\Perf{\M_{U}}} \otimes \C.
\] 
\end{ex}

\begin{rem}
The moduli space $\M_{\C}$ was introduced by To\"en-Vaqui\'e \cite{tv}, where it is shown that for $\C$ a finite type dg category, $\M_{\C}$ is locally an Artin stack of finite presentation and in particular has a perfect cotangent complex. A compactly generated dg category $\C$ is of {\bf finite type} if its category of compact objects $\C^{c}$ is compact in the category $\dgcat$ of small idempotent complete dg categories and exact functors. One can show that finite type dg categories are always smooth. See \cite{tv}, Proposition 2.14.
\end{rem}

\subsection{(Co)tangent complexes and differential forms}

In this subsection, we review the notions of cotangent complex and tangent complex, following I.1 of \cite{gr2}. 
(In fact, \cite{gr2} work with the somewhat more general notion of pro-cotangent complex, but we shall not explicitly need that.)

Given an affine scheme $U=\Spec{R}$ and a connective quasi-coherent sheaf $F \in \QCoh{U}^{\leq 0}$,
we form the trivial square-zero extension $U_{F}=\Spec{R \oplus F}$. Given a prestack $X$ and a point
$U \stackrel{x}\rightarrow X$, the {\bf space of derivations} at $x$ valued in $F$ is by definition
\[
\Mapp{U/}{U_{F}}{X}.
\]
For a fixed point $x$, the space of derivations valued in $F$ is natural in $F$ and we obtain a functor
\[\label{deratpt}
\QCoh{U}^{\leq 0} \rightarrow \Spc, F \mapsto \Mapp{U/}{U_{F}}{X}.
\]
When this functor respects fibres of maps $F_1 \rightarrow F_2$ inducing surjections on $H^0$, it can be extended to an exact functor 
\begin{equation}
\label{derminpt}
\QCoh{U}^{-} \rightarrow \Spc, F \mapsto \Mapp{U/}{U_{F}}{X}.
\end{equation}
We say that $X$ has a {\bf cotangent space} $\cotang{x}{X} \in \QCoh{U}^{-}$ at $U \stackrel{x}\rightarrow X$ if the functor \ref{derminpt} is 
corepresented by  $\cotang{x}{X}$:
\[
\Mapp{\QCoh{U}^{-}}{\cotang{x}{X}}{F} \simeq \Mapp{U/}{U_{F}}{X}.
\]

Suppose $X$ has all cotangent spaces and
\begin{equation}
\label{affsover}
\xymatrix{
U \ar[rr]^{f} \ar[dr]^{x} & & V \ar[dl]^{y} \\
& X}
\end{equation}
is a commutative diagram of affines over $X$. Then there is a natural pullback map
\begin{equation}
\label{pullcotan}
f^{*}\cotang{y}{X} \rightarrow \cotang{x}{X}.
\end{equation}
If \ref{pullcotan} is an isomorphism for all diagrams \ref{affsover}, we obtain a {\bf cotangent complex}
\[
\Cotang{X} \in \QCoh{X}= \lim_{(\Aff/X)^{\rm op}} \QCoh{U}
\]
whose fibres are the cotangent spaces:
\[
x^{*}\Cotang{X} \simeq \cotang{x}{X} \in \QCoh{U}^{-}.
\]

Similarly, given a map of prestacks $X \rightarrow Y$ and a point $x : U \rightarrow X$, the functor of {\bf relative derivations} at $x$ is
\begin{equation}
\label{relder}
F \mapsto \Mapp{U/}{U_{F}}{X} \times_{\Mapp{U/}{U_{F}}{Y}} *.
\end{equation}
If the functor \ref{relder} is co-represented by an object $\cotang{x}{X/Y} \in \QCoh{U}^{-}$, the co-representing object
$\cotang{x}{X/Y}$ is called the relative cotangent space at $x$, and if relative cotangent spaces at different points are compatible under pullback, then we obtain a {\bf relative cotangent complex} $\Cotang{X/Y} \in \QCoh{X}$.

\begin{rem}
One can show in particular that filtered colimits of Artin stacks have cotangent complexes, and that Artin stacks locally of finite presentation have perfect cotangent complexes. In particular, the moduli space $\M_{\C}$ for a finite type dg category $\C$ has a perfect cotangent complex. See \cite{tv}, Theorem 3.6.
\end{rem}

Given a laft prestack $X$ with cotangent complex $\Cotang{X}$, its {\it tangent complex}
\[
\Tan{X} \in \IndCoh{X}
\]
is defined to be the image of its cotangent complex under the contravariant duality \ref{contradual}. In particular,
when the cotangent complex of $X$ is perfect, we have by \ref{perfdual} an identification
\[
\Upsilon(\Cotang{X}^{\vee}) \simeq \Tan{X}
\]
We define the complex of {\it differential $p$-forms} on $X$ to be
\[
\Wedge^{p}\Cotang{X} \in \QCoh{X}.
\]
and the space of differential $p$-forms of degree $n$ to be
\[
\dfor{p}{X}{n}=|\Gamma(X,\Wedge^{p}\Cotang{X} [n])|. \footnote{Here $|| : \Vect \rightarrow \Spc$ is the `geometric realisation' of a complex, which is truncation above at $0$ followed by the Dold-Kan correspondence.}
\]
When $\Cotang{X}$ is perfect, we have by \ref{perfdual} isomorphisms
\begin{equation}\label{dualforms}
\Gamma(X,\Wedge^{p}\Cotang{X}[n]) \simeq \Hom{\QCoh{X}}{\O_{X}}{\Wedge^{p}\Cotang{X}[n]} \simeq \Hom{\IndCoh{X}}{\Wedge^{p}\Tan{X}[-n]}{\omega_{X}}.
\end{equation}

\subsection{The tangent complex of the moduli of objects}

In this subsection, we compute the shifted tangent complex of the moduli of objects $\Tan{\M_{\C}}[-1]$ in a finite type dg category $\C$. Our argument is an adaptation of that of \cite{gr2}, II.8.3.3, which treats the case $\C=\Vect$.

To begin with, we review the  construction of the natural Lie algebra structure on $\Tan{X}[-1] \in \IndCoh{X}$ for $X \in \Prlftdef$. 

Given $X \in \Prlftdef$, consider the completion $(X \times X)^{\wedge}$ of the diagonal $\Delta: X \rightarrow X \times X$ as a pointed formal moduli problem over $X$:
\[
\xymatrix{(X \times X)^{\wedge} \ar[d]^{p_{s}} \\
X \ar@/^1pc/[u]^{\Delta}}
\] 
Looping, we obtain a formal group $\Omega_{X}(X \times X)^{\wedge}$ over $X$ sitting in a pullback diagram
\[
\xymatrix{\Omega_{X}(X \times X)^{\wedge} \ar[d]^{\pi} \ar[r]^{\pi} & X \ar[d]^{\Delta} \\
X \ar[r]^{\Delta} & (X \times X)^{\wedge}
}
\]
It is easy to check that the formal group $\Omega_{X}(X \times X)^{\wedge}$ identifies with the completion $LX^{\wedge}$ of the loop space along the constant loops.

From the theory of formal groups developed in \cite{gr2}, II.7.3, $LX^{\wedge}$ has a cocommutative Hopf algebra of distributions in $\IndCoh{X}$ given as 
\[
\Dist{LX^{\wedge}}=\pi_{*}\omega_{LX^{\wedge}} \simeq \Delta^{!}\Delta_{*}\omega_{X},
\]
whose Lie algebra of `primitive elements' identifies with the shifted tangent complex:
\[
\Prim{\pi_{*}\omega_{LX^{\wedge}} } \simeq \Tan{X}[-1].
\]
By \cite{gr2}, II.6.1.7, there is an isomorphism of cocommutative conilpotent coalgebras
\begin{equation}\label{pbw}
\Sym{X}{\Tang{X}[-1]} \simeq \Delta^{!}\Delta_{*}\omega_{X}.
\end{equation}
By \cite{gr2}, II.7.5.2 and II.8.6.1, there is a natural identification
\begin{equation}
\label{reps}
\Delta^{!}\Delta_{*}\omega_{X} -{\rm mod}(\IndCoh{X}) \simeq \IndCoh{(X \times X)^{\wedge}}
\end{equation}
where the functor $\Delta^{!} : \IndCoh{(X \times X)^{\wedge}} \rightarrow \IndCoh{X}$ corresponds to the forgetful functor and $p_{1}^{!} : \IndCoh{X} \rightarrow \IndCoh{(X \times X)^{\wedge}}$ to the trivial module functor. Taking $!$-pullback along the other factor gives another symmetric monoidal functor $\can{X}: p_{2}^{!} : \IndCoh{X} \rightarrow \IndCoh{(X \times X)^{\wedge}}$. In particular, there is an action map $\Delta^{!}\Delta_{*}\omega_{X} \otimes^{!} F \rightarrow F$ natural in $F \in \IndCoh{X}$, and hence by adjunction an algebra map $\Delta^{!}\Delta_{*}\omega_{X} \rightarrow \Endun{X}{F}$.
In particular, for a perfect complex $\Upsilon(E) \in \IndCoh{X}$, we have an algebra map
\begin{equation}\label{algact}
\Delta^{!}\Delta_{*}\omega_{X} \rightarrow \Endun{X}{\Upsilon(E)} \simeq \Upsilon \Endun{X}{E}.
\end{equation}

\begin{rem}
One can show that for a perfect complex $\Upsilon(E) \in \IndCoh{X}$, the corresponding action map $\Tang{X}[-1] \otimes^{!} \Upsilon(E) \rightarrow \Delta^{!}\Delta_{*}\omega_{X} \otimes^{!} \Upsilon(E) \rightarrow \Upsilon(E)$ identifies with the Atiyah class of $E$. Compare \cite{henn}.
\end{rem}

We shall need the following, which combines the equivalences of \ref{reps} and Proposition \ref{intmods}.

\begin{prop}\label{envmods}
Let $M$ be a module category for $\IndCoh{X}$. Then there is an equivalence 
\begin{equation*}
\IndCoh{(X \times X)^{\wedge}} \otimes_{\IndCoh{X}} M \simeq \intmod{\Delta^{!}\Delta_{*}\omega_{X}}{M}
\end{equation*}
Tensoring over $\IndCoh{X}$ with the functor $\can{X} \simeq p_{2}^{!}: \IndCoh{X} \rightarrow \IndCoh{(X \times X)^{\wedge}} \simeq \intmod{\Delta^{!}\Delta_{*}\omega_{X}}{\IndCoh{X}}$, we obtain a functor $\can{M} : M \rightarrow \intmod{\Delta^{!}\Delta_{*}\omega_{X}}{M}$, endowing every object $m \in M$ with a canonical structure
of $\Delta^{!}\Delta_{*}\omega_{X}$-module. In particular, we obtain a canonical action map $\Delta^{!}\Delta_{*}\omega_{X} \otimes m \rightarrow m$. Adjoint to this, we obtain a natural algebra map 
\begin{equation}\label{algact}
\Delta^{!}\Delta_{*}\omega_{X} \rightarrow \Endun{X}{m}
\end{equation}
in $\IndCoh{X}$.
\end{prop}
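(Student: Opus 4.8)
The plan is to deduce the statement almost formally by combining the equivalence \ref{reps} with Proposition \ref{intmods}, the only genuine work being to keep track of the relevant $\IndCoh{X}$-module structures. Write $\alg := \Delta^{!}\Delta_{*}\omega_{X}$. First I would record that $\alg$, being the cocommutative Hopf algebra of distributions $\Dist{LX^{\wedge}}$, is in particular an associative algebra object of the symmetric monoidal category $\IndCoh{X}$, so that Proposition \ref{intmods} applies with $\A = \IndCoh{X}$ and $\A$-module category $\C = M$. It yields an equivalence
\[
\intmod{\alg}{\IndCoh{X}} \otimes_{\IndCoh{X}} M \simeq \intmod{\alg}{M}.
\]
Substituting the identification \ref{reps}, namely $\intmod{\alg}{\IndCoh{X}} \simeq \IndCoh{(X \times X)^{\wedge}}$, into the left-hand side gives the asserted equivalence $\IndCoh{(X \times X)^{\wedge}} \otimes_{\IndCoh{X}} M \simeq \intmod{\alg}{M}$.

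For the functor $\can{M}$, I would base-change $\can{X} \simeq p_{2}^{!}$ along $- \otimes_{\IndCoh{X}} M$. Under \ref{reps} the source of $\can{X}$ is $\IndCoh{X}$ with its regular module structure, while the target $\IndCoh{(X \times X)^{\wedge}} \simeq \intmod{\alg}{\IndCoh{X}}$ carries the $\IndCoh{X}$-module structure transported from the trivial-module functor $p_{1}^{!}$. Applying $- \otimes_{\IndCoh{X}} M$ and using $\IndCoh{X} \otimes_{\IndCoh{X}} M \simeq M$ on the source, together with the equivalence just established on the target, produces $\can{M} : M \rightarrow \intmod{\alg}{M}$. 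It remains to unwind structures: as recalled before Proposition \ref{intmods}, an object of $\intmod{\alg}{M}$ is precisely the datum of an action $\alg \otimes m \rightarrow m$, equivalently, by the adjunction $- \otimes m \dashv \Homun{X}{m}{-}$, an algebra morphism $\alg \rightarrow \Endun{X}{m}$. Applying $\can{M}$ to $m$ thus endows it with the desired canonical $\alg$-module structure, the action map, and the algebra map $\Delta^{!}\Delta_{*}\omega_{X} \rightarrow \Endun{X}{m}$.

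The step requiring care, which I expect to be the main obstacle, is the legitimacy of the base change defining $\can{M}$: this rests on $\can{X} \simeq p_{2}^{!}$ being a morphism of $\IndCoh{X}$-module categories when the target is given its module structure via $p_{1}^{!}$. Concretely one must verify that $p_{2}^{!}(a \otimes^{!} m) \simeq p_{1}^{!}(a) \otimes^{!} p_{2}^{!}(m)$ for $a, m \in \IndCoh{X}$, i.e. that the two projections of the formal completion $(X \times X)^{\wedge}$ interact correctly with the $\otimes^{!}$-action. This compatibility is exactly what is encoded by the symmetric monoidal structures of $p_{1}^{!}$ and $p_{2}^{!}$ relative to $X$ in the formal-group formalism of \cite{gr2}, II.7; granting it, the remainder of the argument is purely formal.
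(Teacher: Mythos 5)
Your proposal is correct and follows essentially the same route as the paper, which offers no separate proof but introduces the proposition as the combination of the equivalence \ref{reps} with Proposition \ref{intmods}; your substitution of $\A = \IndCoh{X}$, $\alg = \Delta^{!}\Delta_{*}\omega_{X}$, $\C = M$ and the subsequent tensoring of $\can{X} \simeq p_{2}^{!}$ over $\IndCoh{X}$ with $M$ is exactly the intended argument. Your closing remark about the $\IndCoh{X}$-linearity of $p_{2}^{!}$ is a legitimate point of care, and it is indeed supplied by the formal-group formalism of \cite{gr2} as you indicate.
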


For later use, we elaborate on a particular case of the above proposition.

\begin{lem}\label{deltadual}
Let $\C$ be a smooth dg category, $f: \C \rightarrow \QCoh{U}$ a continuous functor with continuous right adjoint $f^{r}$, where $U$ is an affine scheme of finite type, and $E \in \C_{U}=\QCoh{U} \otimes \C $ the object
corepresenting $F=f \otimes \Id{U}$, so that $F \simeq \Homun{U}{E}{-}$. Then the map $\Delta^{!}\Delta_{*}\omega_{U} \rightarrow \Upsilon \Endun{U}{E}$ in $\IndCoh{U}$ from \ref{algact} is Grothendieck-Serre dual to the natural map $\Endun{U}{E}^{\vee} \rightarrow \Delta^{*}\Delta_{*}\O_{U}$ in $\QCoh{U}$.
\end{lem}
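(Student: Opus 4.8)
The plan is to establish the duality by tracing both maps back to their common origin in the equivalence \ref{reps} and the corepresentability setup, then applying Grothendieck-Serre duality functorially. First I would recall the construction of the algebra map \ref{algact} via Proposition \ref{envmods}: the object $\Upsilon(E) \in \IndCoh{U}$ acquires a canonical $\Delta^{!}\Delta_{*}\omega_{U}$-module structure through the functor $\can{U} \simeq p_{2}^{!}$, and the map $\Delta^{!}\Delta_{*}\omega_{U} \to \Upsilon\Endun{U}{E}$ is adjoint to the resulting action $\Delta^{!}\Delta_{*}\omega_{U} \otimes^{!} \Upsilon(E) \to \Upsilon(E)$. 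Using Lemma \ref{homfibre} (or the identification $\Endun{U}{\Upsilon(E)} \simeq \Upsilon\Endun{U}{E}$), I would express this action map concretely in terms of the corepresenting object $E$ and the relative evaluation/coevaluation data from Proposition \ref{coreppre}.

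Next I would identify the dual side. The object $\Delta^{*}\Delta_{*}\O_{U}$ is the $\QCoh{U}$-theoretic counterpart of $\Delta^{!}\Delta_{*}\omega_{U}$, related via the functor $\Upsilon$ and the intertwining $\Upsilon f^{*} \simeq f^{!}\Upsilon$. The natural map $\Endun{U}{E}^{\vee} \to \Delta^{*}\Delta_{*}\O_{U}$ should arise dually: since $E$ is corepresenting with $F \simeq \Homun{U}{E}{-}$, the coalgebra $\Delta^{*}\Delta_{*}\O_{U}$ coacts on $E$ (this is the PBW/divided-power dual of \ref{pbw}), inducing by adjunction a comodule structure and hence a map $\Endun{U}{E}^{\vee} \to \Delta^{*}\Delta_{*}\O_{U}$. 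The key structural input is that $\Delta^{!}\Delta_{*}\omega_{U}$ and $\Delta^{*}\Delta_{*}\O_{U}$ are exchanged under the self-duality $\IndCoh{U} \simeq \IndCoh{U}^{\vee}$, equivalently under the Grothendieck-Serre duality functor $\Dual{U}{-} \simeq \Homun{U}{-}{\omega_{U}}$ of Section \ref{qcaff}, with $\Upsilon(E)$ and $E^{\vee}$ likewise interchanged by \ref{perfdual}.

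With both maps realized concretely, the actual verification is a duality computation: I would apply $\Dual{U}{-}$ to the action map defining \ref{algact} and check that it coincides, up to the canonical identifications $\Dual{U}{\Upsilon\Endun{U}{E}} \simeq \Endun{U}{E}^{\vee}$ and $\Dual{U}{\Delta^{!}\Delta_{*}\omega_{U}} \simeq \Delta^{*}\Delta_{*}\O_{U}$, with the coaction map. Since the action map is adjoint to $\can{U} = p_{2}^{!}$ and Grothendieck-Serre duality intertwines $!$-pullback along $p_{2}$ with $*$-pullback in the appropriate sense, the compatibility should follow from naturality of the base-change isomorphism \ref{basechange} and the self-duality properties of $\IndCoh{U}$.

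The main obstacle I anticipate is bookkeeping the precise compatibility between the two monoidal/duality structures: the map \ref{algact} is an \emph{algebra} map in the $\otimes^{!}$-monoidal structure on $\IndCoh{U}$, whereas its claimed dual is a map in $\QCoh{U}$, and one must check that Grothendieck-Serre duality carries the algebra structure on $\Delta^{!}\Delta_{*}\omega_{U}$ to the coalgebra structure on $\Delta^{*}\Delta_{*}\O_{U}$ compatibly with how $E$ corepresents $F$. In other words, the difficulty is not any single identification but confirming that the functor $\Dual{U}{-}$ (equivalently the self-duality of $\IndCoh{U}$) genuinely exchanges the action-adjunction producing \ref{algact} with the coaction-adjunction producing the dual map, rather than merely relating the source and target objects. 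Once the functoriality of $\Dual{U}{-}$ with respect to the relevant (co)evaluation data is pinned down, the statement reduces to unwinding definitions.
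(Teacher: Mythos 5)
Your proposal matches the paper's proof in all essentials: both identify the objects under $\Dual{U}{-}$ (perfectness of $\Endun{U}{E}$ gives $\Dual{U}{\Endun{U}{E}^{\vee}} \simeq \Upsilon\Endun{U}{E}$, and a Yoneda computation identifies $\Dual{U}{\Delta^{*}\Delta_{*}\O_{U}}$ with $\Delta^{!}\Delta_{*}\omega_{U}$), then realize each map as a unit transposed across the $p_{2}$-adjunction followed by $\Delta^{!}$ resp.\ $\Delta^{*}$, and conclude from the fact that $\mathbf{D}$ exchanges $*$- and $!$-pullback. Your anticipated obstacle about carrying the algebra structure on $\Delta^{!}\Delta_{*}\omega_{U}$ to the coalgebra structure on $\Delta^{*}\Delta_{*}\O_{U}$ is not actually needed: the lemma asserts only that the underlying maps are dual, and the paper sidesteps the multiplicative bookkeeping by reducing both maps to the unit $\omega_{U} \rightarrow \Upsilon\Endun{U}{E}$ and the dual of the unit $\O_{U} \rightarrow \Endun{U}{E}$.
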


\begin{proof}
The assertion is clear at the level of objects. Indeed, since $\Endun{U}{E}$ is perfect, $\Dual{U}{\Endun{U}{E}^{\vee}} \simeq \Endun{U}{E} \otimes \omega_{U} \simeq \Upsilon \Endun{U}{E}$. Moreover, by definition of the duality functor $\Dual{U}{-}$ \ref{contradual}, we have 
\begin{eqnarray*}
\Hom{\IndCoh{U}}{F}{\Dual{U}{\Delta^{*}\Delta_{*}\O_{U}}} \simeq \Hom{\QCoh{U}}{\Delta^{*}\Delta_{*}\O_{U}}{\Dual{U}{F}} \simeq 
\Hom{\QCoh{U \times U}}{\Delta_{*}\O_{U}}{\Dual{U \times U}{\Delta_{*}F}} \simeq \\ \Hom{\IndCoh{U \times U}}{{\Delta_{*}F}}{\Delta_{*}\omega_{U}} \simeq \Hom{\IndCoh{U}}{F}{\Delta^{!}\Delta_{*}\omega_{U}}
\end{eqnarray*}
for $F \in \Coh{U}$, hence
by the Yoneda lemma $\Dual{U}{\Delta^{*}\Delta_{*}\O_{U}}$ and $\Delta^{!}\Delta_{*}\omega_{U}$ are naturally isomorphic.

At the level of morphisms, writing  $\Upsilon\Endun{U}{E} \simeq \Delta^{!}p^{!}_{2} \Upsilon \Endun{U}{E}$,
we have that the map $\Delta^{!}\Delta_{*}\omega_{U} \rightarrow \Upsilon \Endun{U}{E}$ is obtained by applying $\Delta^{!}$ to the natural map $\Delta_{*}\omega_{U} \rightarrow p^{!}_{2}\Upsilon \Endun{U}{E}$ adjoint to the unit $\omega_{U} \rightarrow \Delta^{!}p^{!}_{2}\Upsilon \Endun{U}{E} \simeq \Upsilon \Endun{U}{E}$. Similarly,
writing $\Endun{U}{E}^{\vee}  \simeq \Delta^{*}p^{*}_2 \Endun{U}{E}^{\vee}$, the natural map $\Endun{U}{E}^{\vee} \rightarrow \Delta^{*}\Delta_{*}\O_{U}$ is obtained by applying $\Delta^{*}$ to the natural map $p^{*}_2 \Endun{U}{E}^{\vee} \rightarrow \Delta_{*}\O_{U}$ adjoint to the map $\Endun{U}{E}^{\vee} \rightarrow \O_{U}$
dual to the unit $\O_{U} \rightarrow \Endun{U}{E}$. Since the duality functor $\mathbf{D}$ exchanges $*$-pullback and $!$-pullback, the assertion follows.
\end{proof}

We now proceed to compute the shifted tangent complex of the moduli of objects $\M_{\C}$ in a dg category 
$\C$ of finite type.

Recall that by definition we have a universal continuous adjunction
\[
\Funi_{\C}: \C \longleftrightarrow \IndPerf{\M_{\C}} : \Funi^{r}_{\C}
\]
and hence by Corollary \ref{corep}, there is a left proper object
\[
\Euni_{\C} \in \IndPerf{\M_{\C}} \otimes \C
\]
corepresenting $\Funi_{\C}$. In particular, we obtain an associative algebra $\Endun{\M_{\C}}{\Euni_{\C}}$ in $\Perf{\M_{\C}}$ and hence an associative algebra $\Upsilon \Endun{\M_{\C}}{\Euni_{\C}} \simeq \Endun{\M_{\C}}{\Upsilon \Euni_{\C}}$ in $\IndCoh{\M_{\C}}$. 

Using Proposition \ref{envmods} with $M=\IndCoh{\M_{\C}} \otimes \C$ and $m=\Upsilon \Euni_{\C}$, 
we obtain a natural map of algebras
\[
\Delta^{!}\Delta_{*}\omega_{\M_{\C}} \rightarrow \Endun{\M_{\C}}{\Upsilon \Euni_{\C}}
\]
and hence a map of Lie algebras
\begin{equation}\label{liemap}
\Tang{\M_{\C}}[-1] \rightarrow \Endun{\M_{\C}}{\Upsilon \Euni_{\C}}.
\end{equation}

\begin{prop}\label{tangmod}
The map of Lie algebras \eqref{liemap} is an isomorphism.
\end{prop}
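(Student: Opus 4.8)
The plan is to show that \eqref{liemap} is an isomorphism in $\IndCoh{\M_{\C}}$ by checking it after $!$-pullback to finite type affines and then comparing with the deformation theory of the universal object $\Euni_{\C}$. Since $\M_{\C}$ is laft, we have $\IndCoh{\M_{\C}} \simeq \lim_{(\Aft/\M_{\C})^{\rm op}} \IndCoh{U}$, and a morphism in this limit is an isomorphism if and only if its $!$-pullback along every finite type affine $x : U \to \M_{\C}$ is one. So first I would fix such a point $x$. By definition of $\M_{\C}$ and Corollary \ref{corep}, $x$ corresponds to a continuous adjunction $F : \C \longleftrightarrow \QCoh{U} : F^{r}$ with left proper corepresenting object $E := x^{*}\Euni_{\C} \in \C_{U}$, so that $x^{*}\Funi_{\C} \simeq F$.

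Next I would identify the $!$-pullbacks of both sides of \eqref{liemap} with $\Upsilon\Endun{U}{E}$. For the target, since $\Endun{\M_{\C}}{\Upsilon\Euni_{\C}} \simeq \Upsilon\Endun{\M_{\C}}{\Euni_{\C}}$ and $\Upsilon$ intertwines $*$- and $!$-pullback, while internal endomorphisms of the dualisable object $\Euni_{\C}$ commute with the symmetric monoidal functor $x^{*}$ (cf.\ Lemma \ref{homfibre}), we get $x^{!}\Endun{\M_{\C}}{\Upsilon\Euni_{\C}} \simeq \Upsilon\,x^{*}\Endun{\M_{\C}}{\Euni_{\C}} \simeq \Upsilon\Endun{U}{E}$. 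For the source, the cotangent complex of $\M_{\C}$ is perfect (To\"en-Vaqui\'e), so $\Tang{\M_{\C}} \simeq \Upsilon(\Cotang{\M_{\C}}^{\vee})$ and $x^{!}\Tang{\M_{\C}} \simeq \Upsilon(\cotang{x}{\M_{\C}}^{\vee})$; combined with the To\"en-Vaqui\'e computation $\cotang{x}{\M_{\C}} \simeq \Endun{U}{E}^{\vee}[-1]$ of the cotangent complex of the moduli of objects, this yields $x^{!}(\Tang{\M_{\C}}[-1]) \simeq \Upsilon\Endun{U}{E}$. It then remains to check that the pulled-back map is an equivalence under these identifications.

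The core of the argument is identifying this pulled-back map, and here Lemma \ref{deltadual} is the essential input. By the functoriality of the construction of Proposition \ref{envmods} under $x$ — that is, the compatibility of the formal completion of the diagonal and of the canonical module structure with $!$-pullback — the $!$-pullback of the global algebra map $\Delta^{!}_{\M_{\C}}(\Delta_{\M_{\C}})_{*}\omega_{\M_{\C}} \to \Endun{\M_{\C}}{\Upsilon\Euni_{\C}}$ fits into a commuting triangle with the local algebra map $\Delta^{!}_{U}(\Delta_{U})_{*}\omega_{U} \to \Endun{U}{\Upsilon E}$ and the differential $\Delta^{!}_{U}(\Delta_{U})_{*}\omega_{U} \to x^{!}\Delta^{!}_{\M_{\C}}(\Delta_{\M_{\C}})_{*}\omega_{\M_{\C}}$ induced by $x$. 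Lemma \ref{deltadual} identifies the local map, via Grothendieck-Serre duality, with the natural comparison $\Endun{U}{E}^{\vee} \to \Delta^{*}_{U}(\Delta_{U})_{*}\O_{U}$; passing to primitives and using the HKR decomposition \eqref{pbw} and its dual, which exhibits $\Cotang{U}[1]$ as the primitives of $\Delta^{*}_{U}(\Delta_{U})_{*}\O_{U}$, presents the local map on Lie algebras as the universal Atiyah/Kodaira-Spencer class of the family $E$ over $U$. Since the To\"en-Vaqui\'e identification $\tang{x}{\M_{\C}} \simeq \Endun{U}{E}[1]$ is itself induced by this Atiyah class, the pulled-back map is the identity, hence an isomorphism.

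I expect the hard part to be the last step: proving that the map produced by the formal-group / canonical-module machinery of Proposition \ref{envmods} \emph{realizes} the deformation-theoretic identification of the tangent complex, rather than merely matching the two sides as abstract objects. Concretely, one must show that the action of $\Tang{\M_{\C}}[-1]$ on $\Upsilon\Euni_{\C}$ — the universal Atiyah class of $\Euni_{\C}$ — induces precisely the To\"en-Vaqui\'e isomorphism $\tang{x}{\M_{\C}} \simeq \Endun{U}{E}[1]$. This is exactly the point where the argument adapts \cite{gr2}, II.8.3.3 from $\C = \Vect$ to a general smooth $\C$. The remaining ingredients — locality of $\IndCoh{-}$, the interaction of $\Upsilon$ with pullback, and base change for $*$-pushforward along $!$-pullback — are formal.
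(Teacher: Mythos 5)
Your outline---reduce to $!$-pullback along finite type affines $x : U \to \M_{\C}$, identify both sides with $\Upsilon\Endun{U}{E}$, then identify the map---starts the same way as the paper, but it has a genuine gap exactly where you flag ``the hard part'': you never prove that the pulled-back map \eqref{liemap} realizes the identification of the two sides. Knowing that $x^{!}\Tang{\M_{\C}}[-1]$ and $\Upsilon\Endun{U}{E}$ are abstractly isomorphic (the former via To\"en--Vaqui\'e's computation of $\Cotang{\M_{\C}}$, the latter via Lemma \ref{homfibre}) says nothing about whether the particular map produced by the formal-group machinery of Proposition \ref{envmods} is an equivalence. Your proposed way to close this---that Lemma \ref{deltadual} together with the decomposition \eqref{pbw} exhibits the local map as the universal Atiyah class of $E$, and that the To\"en--Vaqui\'e isomorphism ``is itself induced by this Atiyah class''---is precisely the nontrivial compatibility that requires an argument, and you assert it rather than prove it. (Note also that Lemma \ref{deltadual} is not used in the paper's proof of this proposition; it enters only later, in Proposition \ref{pforms}, and it identifies the map under Grothendieck--Serre duality, which by itself does not supply the deformation-theoretic compatibility you need.)

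The paper avoids the difficulty by never invoking the To\"en--Vaqui\'e cotangent complex at all. By definition of the Lie algebra of the formal group $L\M_{\C}^{\wedge}$ one has $\Tang{\M_{\C}}[-1] \simeq s^{!}\Tang{L\M_{\C}/\M_{\C}}$, so $x^{!}\Tang{\M_{\C}}[-1]$ represents relative derivations of $L\M_{\C} \to \M_{\C}$ at the point $(f,\Id{f})$. Unwinding, a derivation valued in $F \in \Coh{U}$ is an automorphism $\alpha$ of the trivial extension $\pi^{*}f$ over $U_{F}$ restricting to $\Id{f}$, which by adjunction is a map $E \to E \oplus F \otimes E$ with first component $\Id{E}$, i.e.\ a map $E \to F \otimes E$; a short chain of adjunctions then identifies homotopy classes of such maps with $\Hom{\IndCoh{U}}{\Dual{U}{F}}{\Upsilon\Endun{U}{E}}$. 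Since both this representing-functor description and the map \eqref{liemap} arise from the same canonical module structure of Proposition \ref{envmods}, the compatibility is built in. To salvage your route you would have to prove the Atiyah-class statement; it is simpler to compute directly, as the paper does, the functor that $x^{!}\Tang{\M_{\C}}[-1]$ represents.
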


\begin{proof}
Given a point $x: U \rightarrow \M_{\C}$ classifying a functor $f: \C \rightarrow \QCoh{U}$, let $E \in \QCoh{U} \otimes \C$ be the left proper object corepresenting the functor $f \otimes \Id{U} : \C_{U}=\QCoh{U} \otimes \C \rightarrow \QCoh{U}$. Applying $!$-pullback to \eqref{liemap}, we obtain for every $\F \in \Coh{U}$ a map
\begin{equation}\label{pulledliemap}
\Hom{U}{\Dual{U}{F}}{x^{!}\Tang{\M_{\C}}[-1]} \rightarrow  \Hom{U}{\Dual{U}{F}}{\Endun{U}{\Upsilon E}}.
\end{equation}
Since $\M_{\C}$ and $L\M_{\C}$ are laft-def, to show that \eqref{liemap} is an isomorphism it suffices
to check that \eqref{pulledliemap} is an isomorphism for all $\F \in \Coh{U}$, and since $\IndCoh{U}$ is stable, 
it is in fact enough to check that \eqref{pulledliemap} induces an isomorphism on homotopy classes of maps.
We shall do this by showing that $x^{!}\Tang{\M_{\C}}[-1]$ and $\Endun{U}{\Upsilon E}$ represent the
same functor at the level of homotopy categories.

By definition of the Lie algebra of a formal group, we have $\Tan{\M_{\C}}[-1] \simeq s^{!}\Tan{L\M_{\C}/\M_{\C}}$,
hence represents relative derivations for $L\M_{\C} \rightarrow \M_{\C}$ at each point $U \rightarrow \M_{\C} \stackrel{s}\rightarrow L\M_{\C}$.  By definition of $\M_{\C}$ and of the loop space, a point $U \rightarrow \M_{\C} \stackrel{s}\rightarrow L\M_{\C}$ is given by a pair $(f,\Id{f})$, where $f$ is a functor $f : \C \rightarrow \QCoh{U}$ and $\Id{f}$ is the identity automorphism of the functor $f$. Therefore to give a relative derivation into $F \in \Coh{U}$ is to give an automorphism $\alpha$ of the trivial extension $\tilde{f}: \C \stackrel{f}\rightarrow \QCoh{U} \stackrel{\pi^{*}}\rightarrow \QCoh{U_{F}}$ of the functor $f$ together with an identification $i^{*}\alpha \simeq \Id{f}$.

By adjunction, the automorphism $\alpha : \pi^{*}f \rightarrow \pi^{*}f$ is equivalent to a map $E \rightarrow \pi_{*}\pi^{*}E \simeq E \oplus F \otimes E$ in $\C_{U}$ whose first component is just $\Id{E}$. Such a map is therefore determined by its second component $E \rightarrow F \otimes E$.
In short, homotopy classes of derivations with values in $F \in \Coh{U}$ at $(f,\Id{f}): U \rightarrow L\M_{\C}$ relative to $\M_{\C}$ naturally identify with homotopy classes of maps $E \rightarrow F \otimes E$
in $\C_{U}$.

We claim that such  maps are naturally identified with maps $\Dual{U}{F} \rightarrow \Upsilon \Endun{U}{E}$, and thus $\Upsilon \Endun{U}{E}$ identifies with the relative tangent space for every point. Indeed, we have

\begin{align*}
& \Hom{\C_{U}}{E}{F \otimes E}  \simeq \Hom{\QCoh{U}}{\O_{U}}{\Homun{\QCoh{U}}{E}{F \otimes E}} \simeq \\
& \Hom{\QCoh{U}}{\O_{U}}{F \otimes \Endun{\QCoh{U}}{E}} \simeq  \Hom{\QCoh{U}}{\Endun{\QCoh{U}}{\E}^{\vee}}{F} \simeq \\ & \Hom{\IndCoh{U}}{\Dual{U}{F}}{\Dual{U}{{\Endun{\QCoh{U}}{\E}^{\vee}}}} \simeq \Hom{\IndCoh{U}}{\Dual{U}{F}}{\Upsilon \Endun{\QCoh{U}}{E}}.
\end{align*}

\end{proof}

We conclude this section with a computation of the (co)tangent map induced by a dg functor.

\begin{lem}\label{tangmap}
Let $f : \C \longleftrightarrow \D: f^{r}$ be a continuous adjunction between smooth dg categories and $\varphi: \M_{\D} \rightarrow \M_{\C}$ the induced map of moduli spaces. Then there is a natural map
of functors $\Funi_{\D}\Funi^{l}_{\D} \varphi^{*} \rightarrow \varphi^{*}\Funi_{\C}\Funi^{l}_{\C}$
which when evaluated on $\O_{\M_{\C}}$ gives a map $\Endun{\M_{\D}}{\Euni_{\D}} \rightarrow \varphi^{*}\Endun{\M_{\C}}{\Euni_{\C}}$. After applying $\Upsilon$, the latter map gives the shifted tangent map
\[
\Tang{\M_{\D}}[-1] \rightarrow \varphi^{!}\Tang{\M_{\C}}[-1].
\]
The fibre of the above shifted tangent map at a point $x \in \M_{\D}$ corresponding to a functor $\varphi_{x}=\Hom{\D}{-}{x}^{*} : \D \rightarrow \Vect$ identifies with the map $\End{\D}{x} \rightarrow \End{\C}{f^{r}(x)}$ induced by the functor $f^{r} : \D \rightarrow \C$.

Dually, there is a natural map of functors $\varphi^{*}\Funi_{C}\Funi^{r}_{\C} \rightarrow \Funi_{\D}\Funi^{r}_{\D}\varphi^{*}$ which when evaluated on $\O_{\M_{\C}}$ gives a map $\varphi^{*}\Endun{\M_{\C}}{\Euni_{\C}}^{\vee} \rightarrow \Endun{\M_{\D}}{\Euni_{\D}}$. The latter map identifies with the shifted cotangent map
\[
\varphi^{*}\Cotang{\M_{\C}}[1] \rightarrow \Cotang{\M_{\D}}[1].
\]
\end{lem}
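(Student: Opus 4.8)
The plan is to deduce every map in the statement from a single compatibility between the universal families and $\varphi$, and then to identify the resulting maps with the geometric (co)tangent maps by a fibrewise computation. First I would record the basic compatibility. Since $f$ has a continuous right adjoint it preserves compact objects, so $\varphi$ is defined by sending a point of $\M_{\D}$ classifying an adjunction $G : \D \leftrightarrow \QCoh{U}$ to the point of $\M_{\C}$ classifying $G \circ f$. Testing against all such points $h : U \to \M_{\D}$ and using the defining property $h^{*}\Funi_{\D} \simeq G$ of the universal functors, one obtains a canonical isomorphism
\[
\varphi^{*}\Funi_{\C} \simeq \Funi_{\D} \circ f
\]
of functors $\C \to \IndPerf{\M_{\D}}$. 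Applying the same reasoning to a $k$-point $x$ classifying $\varphi_{x} = \Hom{\D}{-}{x}^{*}$ and using the adjunction $f \dashv f^{r}$ shows that $\varphi(x)$ is classified by $\Hom{\C}{-}{f^{r}(x)}^{*} = \varphi_{f^{r}(x)}$, that is, $\varphi(x) = f^{r}(x)$; this is what will make $f^{r}$ appear in the fibre.

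Next I would construct the tangent transformation as a mate. Consider the square with horizontal functors $\Funi_{\C}, \Funi_{\D}$, vertical functors $f, \varphi^{*}$, and the $2$-cell $\varphi^{*}\Funi_{\C} \simeq \Funi_{\D} f$. Since both $\C$ and $\D$ are smooth, Corollary \ref{corep} provides left adjoints $\Funi^{l}_{\C}, \Funi^{l}_{\D}$, and the mate of the $2$-cell with respect to them is a natural transformation $\Funi^{l}_{\D}\varphi^{*} \to f\Funi^{l}_{\C}$, built from the unit of $\Funi^{l}_{\C} \dashv \Funi_{\C}$ and the counit of $\Funi^{l}_{\D} \dashv \Funi_{\D}$. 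Whiskering by $\Funi_{\D}$ and using the isomorphism of the first step gives
\[
\Funi_{\D}\Funi^{l}_{\D}\varphi^{*} \to \Funi_{\D} f \Funi^{l}_{\C} \simeq \varphi^{*}\Funi_{\C}\Funi^{l}_{\C}.
\]
Evaluating on $\O_{\M_{\C}}$, using $\Funi^{l}_{\C}(\O_{\M_{\C}}) \simeq \Euni_{\C}$ together with the identity $FF^{l}(\unit{\A}) \simeq \Endun{\A}{E}$ of Corollary \ref{corep}, yields $\Endun{\M_{\D}}{\Euni_{\D}} \to \varphi^{*}\Endun{\M_{\C}}{\Euni_{\C}}$. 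Applying $\Upsilon$, intertwining $\Upsilon\varphi^{*} \simeq \varphi^{!}\Upsilon$, and invoking $\Tang{\M}[-1] \simeq \Upsilon\Endun{\M}{\Euni}$ from Proposition \ref{tangmod} turns this into $\Tang{\M_{\D}}[-1] \to \varphi^{!}\Tang{\M_{\C}}[-1]$.

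The main obstacle is to show that this abstractly built map is the geometric tangent map of $\varphi$, and to compute its fibre; the two are best handled together. Since $\M_{\C}$ and $\M_{\D}$ are laft-def it suffices to work fibrewise over each affine point, where I would use the derivation description from the proof of Proposition \ref{tangmod}: a tangent vector at $x$ is a first-order deformation of the classifying functor, which $\varphi$ transports by precomposition with $f$, equivalently by applying $f^{r}$ to the underlying right proper object. Thus the geometric tangent map restricts at $x$ to the functoriality map $\End{\D}{x} \to \End{\C}{f^{r}(x)}$ of $f^{r}$, compatibly with $\varphi(x) = f^{r}(x)$. The delicate point is to check that the mate unwinds, at $x$, to exactly this functoriality map; this is where Lemma \ref{dualise} is needed, to match the formal unit/counit manipulations of the mate with the explicit identification of derivations used in Proposition \ref{tangmod}. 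Carrying this out simultaneously proves that the map is the shifted tangent map and yields the claimed fibre.

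Finally, the cotangent map is the dual construction. The mate of the same square with respect to the right adjoints $\Funi^{r}_{\C}, \Funi^{r}_{\D}$ (built from the unit of $\Funi_{\D} \dashv \Funi^{r}_{\D}$ and the counit of $\Funi_{\C} \dashv \Funi^{r}_{\C}$) is a transformation $f\Funi^{r}_{\C} \to \Funi^{r}_{\D}\varphi^{*}$, whence, whiskering by $\Funi_{\D}$ and using $\varphi^{*}\Funi_{\C} \simeq \Funi_{\D} f$, a transformation $\varphi^{*}\Funi_{\C}\Funi^{r}_{\C} \to \Funi_{\D}\Funi^{r}_{\D}\varphi^{*}$. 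Evaluating on $\O_{\M_{\C}}$ and using $FF^{r}(\unit{\A}) \simeq \Endun{\A}{E}^{\vee}$ from Corollary \ref{corep} gives $\varphi^{*}\Endun{\M_{\C}}{\Euni_{\C}}^{\vee} \to \Endun{\M_{\D}}{\Euni_{\D}}^{\vee}$. Since taking duals exchanges the roles of left and right adjoints (Lemma \ref{dualise}(4)), this transformation is the $(-)^{\vee}$-dual of the tangent transformation, so under $\Cotang{\M}[1] \simeq \Endun{\M}{\Euni}^{\vee}$ — itself a consequence of Proposition \ref{tangmod} — it is identified with the shifted cotangent map $\varphi^{*}\Cotang{\M_{\C}}[1] \to \Cotang{\M_{\D}}[1]$, dual to the tangent map already obtained.
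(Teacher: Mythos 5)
Your proposal is correct and follows essentially the same route as the paper: the canonical identification $\varphi^{*}\Funi_{\C}\simeq\Funi_{\D}f$ from the universal property, the mate construction via the unit of $\Funi^{l}_{\C}\dashv\Funi_{\C}$ and the counit of $\Funi^{l}_{\D}\dashv\Funi_{\D}$, evaluation on $\O_{\M_{\C}}$ with Corollary \ref{corep}, identification via Proposition \ref{tangmod}, a fibrewise check for the tangent/fibre claims (the paper invokes Lemma \ref{homfibre} here, which your derivation-level computation amounts to), and dualisation for the cotangent statement.
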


\begin{proof}
The universal property of the moduli spaces gives a commutative diagram of functors
\[
\xymatrix{\C \ar[r]^{f} \ar[d]^{\Funi_{\C}} & \D \ar[d]^{\Funi_{\D}} \\
\Ind{\Perf{\M_{\C}}} \ar[r]^{\varphi^{*}} & \Ind{\Perf{\M_{\D}}} }.
\]
We then have a composition of natural maps of functors $
\Funi^{l}_{\D}\varphi^{*} \rightarrow \Funi^{l}_{\D} \varphi^{*}\Funi_{\C}\Funi^{l}_{\C} \simeq \Funi^{l}_{\D}\Funi_{\D}f\Funi^{l}_{\C} \rightarrow f\Funi^{l}_{\C}$ where the first arrow is induced by the unit $\Id{\C} \rightarrow \Funi_{\C} \Funi^{l}_{\C}$ and the second by the counit $\Funi^{l}_{\D}\Funi_{\D} \rightarrow \Id{\D}$. Applying $\Funi_{\D}$ to this composition gives the desired map $\Funi_{\D}\Funi^{l}_{\D}\varphi^{*} \rightarrow \Funi_{\D}f\Funi^{l}_{\C} \simeq \varphi^{*}\Funi_{\C} \Funi^{l}_{\C}$. Evaluating on $\O_{\M_{\C}}$ indeed gives a map
$\Endun{\M_{\D}}{\Euni_{\D}} \rightarrow \varphi^{*}\Endun{\M_{\C}}{\Euni_{\C}}$ by Corollary \ref{corep}. Using Proposition \ref{tangmod} and applying $\Upsilon$, we obtain a map $\Tang{\M_{\D}}[-1] \rightarrow \varphi^{!}\Tang{\M_{\C}}[-1]$. That this map agrees with the natural tangent map follows easily from the same kind of argument as in the proof of Proposition \ref{tangmod}. Finally, the claim about the fibres follows from Lemma \ref{homfibre}.

The dual statement for the cotangent map is proved dually.

\end{proof}

\section{Traces and Hochschild chains}\label{traces}

\subsection{Traces and circle actions}\label{subtraces}


We begin by reviewing the theory of traces in (higher) symmetric monoidal categories. Our main reference is Hoyois-Scherotzke-Sibilla \cite{hss}, which among other things provides enhanced functoriality for a construction of To\"en-Vezzosi \cite{tvchern}. Other references making use of this circle of ideas include \cite{bzntraces} and \cite{kondprih}. We follow \cite{hss}, but slightly modify the notation and language to be consistent with other parts of the paper. In particular, we call a symmetric monoidal category `very rigid' rather than `rigid' if all its objects are dualisable. 

Following \cite{hss}, given a symmetric monoidal $2$-category $\CC$, we consider the symmetric monoidal $1$-category $\bendo{\CC}$, defined as the symmetric monoidal category of `oplax natural transfors', in the sense of Scheimbauer-Johnson-Freyd \cite{sjf}, from the free very rigid category generated $B\NN$ to $\CC$:
\begin{equation}
\bendo{\CC}:=\Funop{\vrig{B\NN}}{{\CC}}.
\end{equation}
Accordingly, we shall informally say that that $\bendo{\CC}$ is `oplax corepresentable'. At the level of homotopy categories,  $\bendo{\CC}$ admits the following description: an object of $\bendo{\CC}$ is a pair $(\C,\Phi)$, where $\C \in \CC$ is a $1$-dualisable object and $\Phi$ is an endomorphism
of $x$. Given two objects $(\C,\Phi)$ and $(\D,\Psi)$, a morphism between them is a pair $(f,\alpha)$, where
$f : \C \rightarrow \D$ is a $1$-morphism admitting a right adjoint $f^{r}$ in $\CC$ and $\alpha : f \Phi \Rightarrow \Psi f$ is a $2$-morphism. Such a morphism is usually displayed as a lax commutative square

\begin{equation}\label{laxsq}
\xymatrix{\C \ar[d]^{f} \ar[r]^{\Phi} & \C \ar[d]^{f} \ar@{=>}[dl]^{\alpha}\\
\D \ar[r]^{\Psi} & \D}
\end{equation}
The symmetric monoidal structure on $\bendo{\CC}$ is given `pointwise'. We also consider the symmetric monoidal category $\Omega \CC$, whose objects are endomorphisms of the unit $\unit{\CC}$ and whose morphisms are natural transformations between such endomorphisms. 

Definitions 2.9 and 2.11 of \cite{hss} give a symmetric monoidal trace functor 
\begin{equation}\label{endtr}
{\rm Tr}: \bendo{\CC} \rightarrow \Omega \CC.
\end{equation}
The value of ${\rm Tr}$ on an object $(C,\Phi)$ is computed simply as the trace of the endomorphism adjoint to $\Phi$, namely, as the composition $\unit{\CC} \stackrel{\ev{\C}^{\vee}}\longrightarrow \C^{\vee} \otimes \C \stackrel{\Id{\C^{\vee}} \otimes \Phi} \longrightarrow \C^{\vee} \otimes \C \stackrel{\ev{\C}}\longrightarrow \unit{\CC}$. In other words, the trace of $\Phi$ is the composition of the morphism $\unit{\CC} \stackrel{\Phi^{\rm ad}}\rightarrow \C^{\vee} \otimes \C$ adjoint to $\C \stackrel{\Phi}\rightarrow \C$ with the evaluation morphism $\ev{\C} : \C^{\vee} \otimes \C \rightarrow \unit{\CC}$:

\begin{equation}\label{tracephi}
\Tr{\Phi}=\ev{\C}(\Phi^{\rm ad}).
\end{equation}
Given a morphism $(f,\alpha) : (\C,\Phi) \rightarrow (\D,\Psi)$ in $\bendo{\CC}$, the induced map of traces $\Tr{\Phi} \Rightarrow \Tr{\Psi}$ is computed as the left-to-right composition of lax-commutative squares
\begin{equation}
\xymatrixcolsep{2.5pc}
\xymatrixrowsep{3.5pc}
\xymatrix{ 1   \ar@{=}[d] \ar[r]^-{\ev{\C}^{\vee}} & \C^{\vee} \otimes \C \ar[d]|-{(f^{r})^{\vee} \otimes f } \ar[r]^{\Id{\C} \otimes \Phi} \ar@{=>}[dl] & \C^{\vee} \otimes \C \ar[d]|-{(f^{r})^{\vee} \otimes f } \ar[r]^-{\ev{\C}}    \ar@{=>}[dl]  & 1 \ar@{=}[d]  \ar@{=>}[dl] \\
 1 \ar[r]^-{\ev{\D}^{\vee}} & \D^{\vee} \otimes \D \ar[r]^{\Id{\D} \otimes \Psi} & \D^{\vee} \otimes \D \ar[r]^-{\ev{\D}} & 1  }.
\end{equation}
Here, we have used Lemma \ref{dualise} to define the $2$-morphisms in the left-most and right-most squares
as $(f^{r})^{\vee} \otimes f \circ \ev{\C}^{\vee}\simeq (\Id{\D^{\vee}} \otimes ff^{r}) \circ \ev{\D}^{\vee}  \stackrel{(1 \otimes \varepsilon) \circ \ev{\D}^{\vee}}\longrightarrow \ev{\D}^{\vee}$ and $\ev{\C} \stackrel{\ev{\C} \circ (1 \otimes \eta)}\longrightarrow \ev{\C} \circ (\Id{\C^{\vee}} \otimes f^{r}f) \simeq \ev{\D} \circ (f^{r})^{\vee} \otimes f$, while the $2$-morphism in the central square is $1 \otimes \alpha$.

\begin{lem}\label{tracefact}
Given a morphism $(f,\alpha):(\C,\Phi) \rightarrow (\D,\Psi)$ corresponding to a lax commutative square \ref{laxsq}, the induced map of traces $\Tr{f,\alpha}:\Tr{\Phi} \rightarrow \Tr{\Psi}$ factors as
\begin{equation*}
\Tr{\Phi} \stackrel{\Tr{\Phi \eta }}\longrightarrow \Tr{\Phi f^{r}f} \simeq \Tr{f\Phi f^{r}} \stackrel{\Tr{\alpha f^{r}}}\longrightarrow \Tr{\Psi ff^{r}}
\stackrel{\Tr{\Psi \varepsilon}}\longrightarrow \Tr{\Psi}
\end{equation*}
\end{lem}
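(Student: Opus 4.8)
The plan is to realise the map of traces $\Tr{f,\alpha}$ as the image under the trace functor $\Tr : \bendo{\CC} \to \Omega\CC$ of an explicit factorisation of the morphism $(f,\alpha)$ inside $\bendo{\CC}$, and then to read off each factor using \eqref{tracephi} together with the functoriality recipe displayed just before the statement. Concretely, writing $\eta : \Id{\C}\Rightarrow f^{r}f$ and $\varepsilon : f f^{r}\Rightarrow\Id{\D}$ for the unit and counit of $f\dashv f^{r}$, I would factor
\[
(\C,\Phi)\xrightarrow{(\Id{\C},\Phi\eta)}(\C,\Phi f^{r}f)\xrightarrow{(f,\id)}(\D,f\Phi f^{r})\xrightarrow{(\Id{\D},\alpha f^{r})}(\D,\Psi f f^{r})\xrightarrow{(\Id{\D},\Psi\varepsilon)}(\D,\Psi),
\]
where the three squares with identity $1$-morphism carry the evident whiskered $2$-cells and the middle one carries the identity $2$-cell.

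First I would check that this really is a factorisation of $(f,\alpha)$ in $\bendo{\CC}$. The underlying $1$-morphisms compose to $f$, so it remains to see that the pasted $2$-morphism is $\alpha$. Using the composition rule $(g_{2},\gamma_{2})\circ(g_{1},\gamma_{1})=(g_{2}g_{1},(\gamma_{2}g_{1})\circ(g_{2}\gamma_{1}))$ in $\bendo{\CC}$, the composite $2$-cell works out to
\[
(\Psi\varepsilon f)\circ(\alpha f^{r}f)\circ(f\Phi\eta) : f\Phi\Rightarrow\Psi f.
\]
By the interchange law the last two factors combine to the horizontal composite $\alpha * \eta = (\Psi f\eta)\circ\alpha$, so the expression becomes $(\Psi\varepsilon f)\circ(\Psi f\eta)\circ\alpha = \Psi\big((\varepsilon f)\circ(f\eta)\big)\circ\alpha$, and the triangle identity $(\varepsilon f)\circ(f\eta)=\Id{f}$ collapses this to $\alpha$, as needed.

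Next I would apply the functor $\Tr$ and identify the four images. For a morphism of the form $(\Id{\C},\beta)$ the general recipe degenerates: since $f=\Id{\C}$, the comparison $2$-cells built from Lemma \ref{dualise} and from $\eta,\varepsilon$ are all trivial, so the two outer squares are identities and $\Tr{(\Id{\C},\beta)}$ is just the trace $\Tr{\beta}$ of the $2$-morphism $\beta$. This identifies the first, third, and fourth factors with $\Tr{\Phi\eta}$, $\Tr{\alpha f^{r}}$, and $\Tr{\Psi\varepsilon}$. The middle factor $\Tr{(f,\id)}$ is the cyclic-invariance isomorphism $\Tr{\Phi f^{r}f}\simeq\Tr{f\Phi f^{r}}$, and functoriality of $\Tr$ then yields exactly the claimed four-term factorisation.

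The main obstacle is precisely the identification of $\Tr{(f,\id)}$ with the cyclic-symmetry isomorphism. Here the central comparison $2$-cell is the identity on $(f^{r})^{\vee}\otimes f\Phi f^{r}f$, while the two outer squares still involve a unit $\eta$ (at the $\ev{\C}$ end) and a counit $\varepsilon$ (at the $\ev{\C}^{\vee}$ end) coming from Lemma \ref{dualise}. I would show that, after pasting through the identity central cell, these cancel by the same triangle-identity mechanism as above, leaving the isomorphism induced solely by the two duality comparisons of Lemma \ref{dualise}, i.e. the rotation $\Tr{(\Phi f^{r})\circ f}\simeq\Tr{f\circ(\Phi f^{r})}$ furnished by the cyclic structure of $\Tr$. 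This cancellation is the only point that is not purely formal bookkeeping; everything else is functoriality of $\Tr$ and the interchange law.
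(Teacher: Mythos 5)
Your proof is correct and is essentially the paper's own argument: the paper establishes the lemma by exhibiting exactly the same vertical factorisation of the lax square into four squares — $(\Id{\C},\Phi\eta)$, then $(f,\id)$ with identity $2$-cell, then $(\Id{\D},\alpha f^{r})$, then $(\Id{\D},\Psi\varepsilon)$ — and applying functoriality of $\on{Tr}$. The additional details you supply (the triangle-identity verification that the pasted $2$-cell is $\alpha$, and the identification of $\on{Tr}(f,\id)$ with the cyclicity isomorphism $\Tr{\Phi f^{r}f}\simeq\Tr{f\Phi f^{r}}$) are left implicit in the paper's proof.
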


\begin{proof}
Observe that the diagram
\[
\xymatrix{\C \ar[d]^{f} \ar[r]^{\Phi} & \C \ar[d]^{f} \ar@{=>}[dl]^{\alpha}\\
\D \ar[r]^{\Psi} & \D}
\]
factors as

\[
\xymatrixrowsep{3.5pc}
\xymatrixcolsep{2.75pc}
\xymatrix{\C \ar@{=}[d] \ar[r]^{\Phi} & \C \ar@{=}[d] \ar@{=>}[dl]^{\Phi \eta}\\
\C \ar[d]^{f} \ar[r]^{\Phi ff^{r}} & \C \ar[d]^{f} \ar@{=}[dl] \\
\D \ar@{=}[d] \ar[r]^{f \Phi f^{r}} & \D\ar@{=}[d] \ar@{=>}[dl]^{\alpha f^{r}} \\
\D \ar@{=}[d] \ar[r]^{\Psi ff^{r}} & \D \ar@{=}[d] \ar@{=>}[dl]^{\Psi \varepsilon} \\
\D \ar[r]^{\Psi} & \D}
\]

\end{proof}

An important feature of the theory of traces developed in \cite{hss} is the naturality in $\CC$ of the trace functor
$\Trsub{\CC}: \bendo{\CC} \rightarrow \Omega \CC$. While not explicitly stated in \cite{hss}, the following lemma follows immediately from `oplax corepresentability' of $\bendo{\CC}$. 

\begin{lem}\label{trnat}
Given a symmetric monoidal 2-functor $F: \CC \rightarrow \DD$, we have a commutative diagram of symmetric monoidal $2$-functors
\[
\xymatrix{
\bendo{\CC} \ar[r]^{\Trsub{\CC}} \ar[d]^{F} & \Omega \CC \ar[d]^{F} \\
\bendo{\DD} \ar[r]^{\Trsub{\DD}} & \Omega \DD}
\]
Explicitly, given an object $(\C,\Phi) \in \CC$, we have an equivalence
\[F(\Trr{\CC}{\Phi}) \simeq \Trr{\DD}{F(\Phi)}.
\]
Furthermore, if $G$ is right adjoint to $F$, then for any object $(\D,\Psi)$ in $\D$, the counit $F \circ G \Rightarrow \Id{\D}$ induces a natural map
$F(\Trr{\CC}{G \Psi}) \simeq \Trr{\DD}{FG \Psi} \rightarrow \Trr{\DD}{\Psi}$ and hence, by adjunction, a natural map
\[
\Trr{\CC}{G \Psi} \rightarrow G \Trr{\DD}{\Psi}.
\]
\end{lem}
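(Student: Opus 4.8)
The plan is to obtain both assertions formally from the ``oplax corepresentability'' underlying the trace construction of \cite{hss}. The key structural input is that $\bendo{\CC} \simeq \Funop{\vrig{B\NN}}{\CC}$, that $\Omega\CC$ is likewise a mapping category out of a fixed symmetric monoidal category, and that $\Trsub{\CC}$ is induced by precomposition with one universal morphism of corepresenting objects that does not depend on $\CC$. Consequently $\bendo{-}$, $\Omega(-)$ and the trace $\on{Tr}$ are all natural in the source symmetric monoidal $2$-category: they are functorial in symmetric monoidal $2$-functors as well as in their natural transformations, and, being functorial, they send adjunctions to adjunctions. Granting this, both parts of the lemma are formal consequences.

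For the commuting square, note that $F_{*}$ is postcomposition with $F$ while $\Trsub{\CC}$ is precomposition with the universal morphism; since $\Funop{-}{-}$ is contravariant in its first and covariant in its second slot, the two commute. Concretely, a symmetric monoidal $2$-functor preserves dualisable objects together with their evaluations and coevaluations, and hence transports the defining composite $\Trr{\CC}{\Phi} = \ev{\C} \circ (\Id{\C^{\vee}} \otimes \Phi) \circ \ev{\C}^{\vee}$ to the same composite for $(F\C, F\Phi)$, yielding $F(\Trr{\CC}{\Phi}) \simeq \Trr{\DD}{F\Phi}$ and the commutativity of the square.

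For the second statement I read $G$ as a symmetric monoidal $2$-functor right adjoint to $F$, so that it induces $G_{*}$ on $\bendo{-}$ and $\Omega(-)$ and sends $(\D, \Psi)$ to $(G\D, G\Psi) \in \bendo{\CC}$. Applying the first part to $F$ and the object $(G\D, G\Psi)$ gives $F(\Trr{\CC}{G\Psi}) \simeq \Trr{\DD}{F(G\Psi)} = \Trr{\DD}{FG\Psi}$. The counit $\varepsilon : FG \Rightarrow \Id_{\DD}$ is a symmetric monoidal natural transformation, so naturality of $\on{Tr}$ carries it to a map $\Trr{\DD}{FG\Psi} \to \Trr{\DD}{\Psi}$; equivalently, evaluating $\varepsilon$ at $(\D, \Psi)$ and filling the resulting lax square by the naturality isomorphism $\varepsilon_{\D} \circ FG\Psi \simeq \Psi \circ \varepsilon_{\D}$ gives a morphism $(FG\D, FG\Psi) \to (\D, \Psi)$ in $\bendo{\DD}$ whose image under $\Trsub{\DD}$ is the same map. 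Composing produces $F(\Trr{\CC}{G\Psi}) \to \Trr{\DD}{\Psi}$ in $\Omega\DD$. Finally, since $\Omega(-)$ preserves the adjunction $F \dashv G$, we have $\Omega F \dashv \Omega G$, and transposing the previous map across this adjunction yields the desired $\Trr{\CC}{G\Psi} \to G\,\Trr{\DD}{\Psi}$.

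The point requiring care, rather than a genuine obstacle, is the compatibility between the two descriptions of the counit-induced map: that the transformation of traces produced by naturality of $\on{Tr}$ in the natural transformation $\varepsilon$ agrees with $\Trsub{\DD}$ applied to the $\bendo{\DD}$-morphism $(\varepsilon_{\D}, \mathrm{can})$, and in particular that $\varepsilon_{\D}$ is of the type admitted as a $1$-morphism of $\bendo{\DD}$. All of this is dictated by the universal property of $\bendo{-}$, so the verification is formal, but it is the step at which one must be attentive.
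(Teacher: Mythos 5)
Your proof takes essentially the same approach as the paper, which in fact offers no written argument at all: it simply asserts that the lemma ``follows immediately from oplax corepresentability of $\bendo{\CC}$'', which is exactly the naturality-of-$\Trsub{\CC}$-in-$\CC$ argument you spell out. Your elaboration of the second part (applying the first part to $(G\D,G\Psi)$, pushing the counit through $\bendo{\DD}$, and transposing across the induced adjunction on $\Omega(-)$) is correct and consistent with how the lemma is used later in the paper, e.g.\ in Proposition \ref{hochrig}.
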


Similarly to the category of endomorphisms $\bendo{\CC}$, we define the category of automorphisms
as
\[
\bAut{\CC}:=\Funop{\vrig{S^{1}}}{\CC}.
\]
At the level of homotopy categories, $\bAut{\CC}$ admits the following description. The objects of $\bAut{\CC}$ are pairs $(\C,\Phi)$ of a dualisable object in $\CC$ together with an automorphism $\Phi$. The $1$-morphisms in $\bAut{\CC}$ are the same as those in $\bendo{\CC}$. Restricting along $B\NN \rightarrow B\ZZ=S^{1}$, we obtain a symmetric monoidal trace functor
\begin{equation}
\Trr{\CC}{-} :\bAut{\CC} \rightarrow \Omega \CC.
\end{equation}

The main result that we need from \cite{hss} is Theorem 2.14 (refining Corollaire 2.19 of \cite{tvchern}), which states that the trace functor $\Trr{\CC}{-} : \bAut{\CC} \rightarrow \Omega \CC$ admits a unique $S^{1}$-equivariant lift natural in symmetric monoidal functors $\CC \rightarrow \DD$. Here $\bAut{\CC}=\Funop{\vrig{S^{1}}}{\CC}$ carries the $S^{1}$-action induced by that on $\vrig{S^{1}}$, while $\Omega \CC$ carries the trivial $S^{1}$-action.
Here we explicitly formulate the result from \cite{hss} that we shall need later.

\begin{prop}\label{circleeq}
Given an $S^{1}$-fixed point $(\C,\Phi) \in \bAut{\CC}$, there is an induced $S^{1}$-fixed point structure on $\Trr{\CC}{\Phi} \in \Omega \CC$, that is, an $S^{1}$ action on $\Trr{\CC}{\Phi}$. Given a second $S^{1}$-fixed point $(\D,\Psi) \in \bAut{\CC}$, and an $S^{1}$-fixed map $(f,\alpha) : (\C,\Phi) \rightarrow (\D,\Psi)$, we get an induced $S^{1}$-equivariant map $\Trr{\CC}{\Phi} \rightarrow \Trr{\DD}{\Psi}$.

Moreover, given a symmetric monoidal functor $F: \CC \rightarrow \DD$ between symmetric monoidal $2$-categories, 
we obtain an $S^{1}$-equivariant equivalence 
\begin{equation}
F(\Trr{\CC}{\Phi}) \simeq \Trr{\DD}{F(\Phi)}
\end{equation}
\end{prop}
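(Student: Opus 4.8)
The plan is to obtain the entire statement as a formal consequence of the existence-and-naturality theorem for the $S^{1}$-equivariant lift of the trace (Theorem 2.14 of \cite{hss}, recalled just above), once we unwind what an $S^{1}$-fixed point amounts to on each side. First I would take the $S^{1}$-equivariant lift $\Trr{\CC}{-} : \bAut{\CC} \rightarrow \Omega\CC$, where $\bAut{\CC}$ carries the loop-rotation action inherited from $\vrig{S^{1}}$ and $\Omega\CC$ carries the trivial action, and pass to homotopy fixed points. Since $(-)^{hS^{1}}$ is functorial in $S^{1}$-equivariant functors, this produces a functor $(\bAut{\CC})^{hS^{1}} \rightarrow (\Omega\CC)^{hS^{1}}$. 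The one point to unwind is the target: because the action on $\Omega\CC$ is trivial, there is a canonical equivalence $(\Omega\CC)^{hS^{1}} \simeq \Fun{BS^{1}}{\Omega\CC}$, under which an object is an object of $\Omega\CC$ equipped with an $S^{1}$-action and a morphism is an $S^{1}$-equivariant map.

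The first two assertions then follow immediately. An $S^{1}$-fixed point $(\C,\Phi)$ is by definition an object of $(\bAut{\CC})^{hS^{1}}$, so its image is an object of $(\Omega\CC)^{hS^{1}} \simeq \Fun{BS^{1}}{\Omega\CC}$, that is, the object $\Trr{\CC}{\Phi}$ equipped with an $S^{1}$-action. Likewise, an $S^{1}$-fixed map $(f,\alpha) : (\C,\Phi) \rightarrow (\D,\Psi)$ is a morphism in $(\bAut{\CC})^{hS^{1}}$, hence maps to a morphism in $\Fun{BS^{1}}{\Omega\CC}$, i.e.\ an $S^{1}$-equivariant map $\Trr{\CC}{\Phi} \rightarrow \Trr{\CC}{\Psi}$.

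For the naturality clause I would invoke the corresponding part of Theorem 2.14: a symmetric monoidal $2$-functor $F : \CC \rightarrow \DD$ induces $S^{1}$-equivariant functors $\bAut{\CC} \rightarrow \bAut{\DD}$ and $\Omega\CC \rightarrow \Omega\DD$ by postcomposition, and the lift comes with a commuting square of $S^{1}$-equivariant functors refining the non-equivariant square of Lemma \ref{trnat}. Applying $(-)^{hS^{1}}$ to this square and evaluating at the fixed point $(\C,\Phi)$ yields the desired $S^{1}$-equivariant equivalence $F(\Trr{\CC}{\Phi}) \simeq \Trr{\DD}{F(\Phi)}$.

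Since the genuine content — the existence, uniqueness, and naturality of the equivariant lift — is exactly the cited result, the only work is the bookkeeping above. The step requiring the most care is checking that the identification $(\Omega\CC)^{hS^{1}} \simeq \Fun{BS^{1}}{\Omega\CC}$ is compatible with the naturality square, so that the $S^{1}$-action produced on $\Trr{\CC}{\Phi}$ is genuinely the loop-rotation action transported through the trace and not a spurious one; concretely this reduces to observing that $F : \Omega\CC \rightarrow \Omega\DD$ is $S^{1}$-equivariant for the trivial actions, which is immediate but worth recording explicitly.
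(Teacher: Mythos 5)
Your proposal is correct and matches the paper's (implicit) argument: the paper offers no separate proof, presenting the proposition as the explicit formulation of Theorem 2.14 of \cite{hss}, and your passage to homotopy fixed points together with the identification $(\Omega\CC)^{hS^{1}} \simeq \Fun{BS^{1}}{\Omega\CC}$ for the trivial action is exactly the bookkeeping that deduction requires. The only quibble is notational: since $(\D,\Psi)$ is an object of $\bAut{\CC}$, the induced map lands in $\Omega\CC$, so your $\Trr{\CC}{\Psi}$ is in fact more accurate than the statement's $\Trr{\DD}{\Psi}$.
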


The case of most interest to us will be the trace of the identity functor $\Id{\C}$ on a dualisable object $\C \in \CC$, which is naturally $S^{1}$-fixed. In the next subsection, we consider the special case of the symmetric monoidal $2$-category of presentable dg categories, in which case $\Tr{\Id{\C}}$ gives a natural realisation of Hochschild chains of $\C$ with its functorial $S^{1}$-action. In the following subsection, we consider the special case of the symmetric monoidal $2$-category of correspondences of affine (derived) schemes, and use Proposition \ref{circleeq} to identify Hochschild chains and functions on the loop space as $S^{1}$-complexes.

\begin{rem}
While the constructions above were described mostly at the level of homotopy categories, which is sufficient for later computations, the {\it existence} of a homotopy coherent trace functor and its $S^{1}$-equivariant lift
are important for us and provided by \cite{hss} and \cite{tvchern}. As we have briefly indicated, homotopy coherence and functoriality are handled by defining the symmetric monoidal categories $\bendo{\CC}$ and $\bAut{\CC}$ to be `oplax corepresentable' by $\vrig{B\NN}$ and $\vrig{B\ZZ}$ respectively.  
\end{rem}

\subsection{Hochschild chains of dg categories}

We now specialise to the case of the symmetric monoidal $2$-category $\DGCattwo$ of presentable dg categories. Given a dualisable dg category $\C \in \DGCattwo$, we define {\bf Hochschild chains} of $\C$ to be trace of the identity functor on $\C$ endowed with the $S^{1}$-action described in the last section:

\[
\HH{\C}:= \Tr{\Id{\C}}
\]

\begin{rem}
There are various approaches in the literature to the $S^{1}$-action on Hochschild chains. Most classically, the $S^{1}$-action is described in terms of the cyclic bar complex, as in the book of Loday \cite{loday}. Comparable to this is the construction of Hochschild chains in terms of factorisation homology, as in \cite{lurHA} and \cite{amr}. In this paper we use the $S^{1}$-action coming from the cobordism hypothesis, as in \cite{tvchern}. While the comparison between the first two $S^{1}$-actions and the third seem to be known to experts, we so far have not found a reference. Nonetheless, we have chosen not to reflect this ambiguity in the notation.
\end{rem}

Given a continuous adjunction $f : \C \longleftrightarrow \D: f^{r}$ between dualisable dg categories, we obtain
from the formalism of traces an induced $S^{1}$-equivariant map
\[\HH{\C} \rightarrow \HH{\D}.\]

Recall from Section \ref{sectdgcat} that when $\C$ is smooth, then by definition the evaluation functor $\ev{\C}: \C^{\vee} \otimes \C \rightarrow \Vect$
has a left adjoint $\evL{\C} : \Vect \rightarrow \C^{\vee} \otimes \C$. Under the identification $\C ^{\vee} \otimes \C \simeq \endo{\C}$, $\evL{\C}(k)$ corresponds to a continuous endofunctor of $\C$, denoted $\Id{\C}^{!}$ and called the inverse dualising functor of $\C$. By definition of the identification $\C^{\vee} \otimes \C \simeq \endo{\C}$, the action of $\Id{\C}^{!}$ is given by the composition

\begin{equation}\label{invdual}
\Id{\C}^{!} : \C \stackrel{\Id{\C} \otimes \evL{\C}}\longrightarrow \C \otimes \C^{\vee} \otimes \C \stackrel{\tau \otimes \Id{\C}}\simeq \C^{\vee} \otimes \C \otimes \C  \stackrel{\ev{\C} \otimes \Id{\C}}\longrightarrow \C
\end{equation}

Forgetting the $S^{1}$-action, we obtain the following expression for Hochschild chains of a smooth dg category $\C$ in terms of ${\rm Hom}$-complexes:

\begin{equation}\label{hochsmooth}
\HH{\C}=\Tr{\Id{\C}}=\Hom{k}{k}{\ev{\C} \circ \ev{\C}^{\vee}(k)} \simeq \Hom{\C^{\vee} \otimes \C} {\evL{\C}(k)}{\ev{\C}^{\vee}(k)} \simeq \Hom{\endo{\C}}{\Id{\C}^{!}}{\Id{\C}}.
\end{equation}

Using the above identification, we can compute the map on Hochschild chains for a dualisable functor with smooth source and dualisable target and in particular for smooth source and smooth target.

\begin{prop}\label{hochfact}
Let $f: \C \leftrightarrow \D: f^{r}$ be a continuous adjunction with smooth source and dualisable target. Given a Hochschild chain $k[i] \rightarrow \Tr{\Id{\C}}$ adjoint to a natural transformation $\alpha : \Inv{\C}[i] \rightarrow \Id{\C}$, the composition $k[i] \rightarrow \Tr{\Id{\C}} \rightarrow \Tr{\Id{\D}}$ giving the image of the Hochschild chain under the functor $f$ identifies with the composition
\begin{equation*}
k[i] \rightarrow \Tr{\Inv{\C}}[i] \stackrel{\Tr{\Inv{\C}}\eta}[i]\longrightarrow \Tr{\Inv{\C}f^{r}f}[i]\simeq \Tr{f \Inv{\C}f^{r}}[i] \stackrel{\Tr{f \alpha f^{r}}}\longrightarrow \Tr{ff^{r}} \stackrel{\Tr{\varepsilon}}\longrightarrow \Tr{\Id{\D}}. 
\end{equation*}

When $\D$ is also smooth, there is a natural unit map $\tilde{\eta}: \Inv{\D} \rightarrow f \Inv{\C} f^{r}$ so that the image of $\alpha$ identifies with the composition
\begin{equation}
\Inv{\D}[i] \stackrel{\tilde{\eta}[i]}\longrightarrow  f \Inv{\C}[i] f^{r} \stackrel{f \alpha f^{r}}\longrightarrow ff^{r}\stackrel{\varepsilon}\longrightarrow \Id{\D}
\end{equation}
under the isomorphism $\Tr{\Id{\D}} \simeq \Hom{\endo{\D}}{\Inv{\D}}{\Id{\D}}$.

\end{prop}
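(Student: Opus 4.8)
The plan is to run everything through the functoriality of the trace functor $\Tr{-}: \bendo{\DGCattwo} \rightarrow \Omega \DGCattwo$ and to recognise the displayed first composition as a single application of Lemma \ref{tracefact}. First I would record two ingredients. The induced map on Hochschild chains $\HH{\C} = \Tr{\Id{\C}} \rightarrow \Tr{\Id{\D}} = \HH{\D}$ is, by construction, the image under $\Tr{-}$ of the morphism $(f,\id): (\C,\Id{\C}) \rightarrow (\D,\Id{\D})$ in $\bendo{\DGCattwo}$ whose lax square is the identity $2$-morphism $f\Id{\C} \Rightarrow \Id{\D}f$. Second, the smooth identification \eqref{hochsmooth} refines to an isomorphism $\Tr{\Phi} \simeq \Hom{\endo{\C}}{\Inv{\C}}{\Phi}$ natural in the endofunctor $\Phi$, under which the canonical class $u: k \rightarrow \Tr{\Inv{\C}}$ corresponds to $\id_{\Inv{\C}}$; hence the chain $c: k[i] \rightarrow \Tr{\Id{\C}}$ adjoint to $\alpha: \Inv{\C}[i] \rightarrow \Id{\C}$ factors as $k[i] \xrightarrow{u[i]} \Tr{\Inv{\C}}[i] = \Tr{\Inv{\C}[i]} \xrightarrow{\Tr{\Id{\C},\alpha}} \Tr{\Id{\C}}$, where $\Tr{\Id{\C},\alpha}$ is the trace of $(\Id{\C},\alpha): (\C,\Inv{\C}[i]) \rightarrow (\C,\Id{\C})$.

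With these in hand the first assertion becomes formal: the image is $f_{*}c = \Tr{f,\id} \circ \Tr{\Id{\C},\alpha} \circ u[i]$, and by functoriality of $\Tr{-}$ this equals $\Tr{(f,\id)\circ(\Id{\C},\alpha)} \circ u[i]$. The composite in $\bendo{\DGCattwo}$ is exactly $(f, f\alpha): (\C, \Inv{\C}[i]) \rightarrow (\D,\Id{\D})$, its pasted lax square being $f\alpha: f\Inv{\C}[i] \Rightarrow f = \Id{\D}f$. I would then apply Lemma \ref{tracefact} to $(f,f\alpha)$, whose conclusion is precisely the factorisation $\Tr{\Inv{\C}[i]} \rightarrow \Tr{\Inv{\C}[i]f^{r}f} \simeq \Tr{f\Inv{\C}[i]f^{r}} \rightarrow \Tr{ff^{r}} \rightarrow \Tr{\Id{\D}}$; precomposing with $u[i]$ and factoring the shift through yields the stated composition verbatim. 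Here the middle sliding isomorphism $\Tr{\Inv{\C}f^{r}f} \simeq \Tr{f\Inv{\C}f^{r}}$ is the cyclic invariance of the trace for the pair $\Inv{\C}f^{r}: \D \rightarrow \C$ and $f: \C \rightarrow \D$, as already used in Lemma \ref{tracefact}.

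For the second assertion, which presumes $\D$ smooth, I would transport the whole composition through the smooth identification for $\D$, namely the natural isomorphism $\Tr{\Psi} \simeq \Hom{\endo{\D}}{\Inv{\D}}{\Psi}$ in $\Psi \in \endo{\D}$. Under it $\Tr{f\alpha f^{r}}$ and $\Tr{\varepsilon}$ become postcomposition with the natural transformations $f\alpha f^{r}: f\Inv{\C}[i]f^{r} \rightarrow ff^{r}$ and $\varepsilon: ff^{r} \rightarrow \Id{\D}$, so the image is classified by $\varepsilon \circ f\alpha f^{r} \circ \tilde\eta[i]: \Inv{\D}[i] \rightarrow \Id{\D}$. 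Here $\tilde\eta: \Inv{\D} \rightarrow f\Inv{\C}f^{r}$ is the natural transformation classifying the head $k \rightarrow \Tr{\Inv{\C}} \xrightarrow{\Tr{\Inv{\C}\eta}} \Tr{\Inv{\C}f^{r}f} \simeq \Tr{f\Inv{\C}f^{r}} \simeq \Hom{\endo{\D}}{\Inv{\D}}{f\Inv{\C}f^{r}}$; being assembled from the unit $\eta$ of $f \dashv f^{r}$ and the sliding isomorphism, it merits the name unit map, and one may match it with the expected comparison of inverse dualising functors using $f^{l} \simeq \Inv{\C}f^{r}$ from Proposition \ref{coreppre}.

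The hard part will not be the abstract manipulation but the bookkeeping of naturality of the smooth identifications $\Tr{\Phi} \simeq \Hom{\endo{\C}}{\Inv{\C}}{\Phi}$ in the endofunctor argument: it is exactly this naturality that converts the opaque trace maps $\Tr{\Id{\C},\alpha}$, $\Tr{f\alpha f^{r}}$, and $\Tr{\varepsilon}$ into honest pre- and postcompositions with $\alpha$, $f\alpha f^{r}$, and $\varepsilon$, and that pins the canonical class $u$ to the identity. Verifying that this naturality is compatible with the cyclic sliding isomorphism, so that the two smooth identifications (for $\C$ at the source and for $\D$ at the target) may be applied simultaneously, is the one genuinely non-formal point.
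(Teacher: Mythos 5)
Your argument is correct and is essentially the paper's proof. Both factor the given Hochschild chain as $k[i] \to \Tr{\Inv{\C}}[i] \to \Tr{\Id{\C}}$ using the unit of $\evL{\C} \dashv \ev{\C}$ under the identification \eqref{hochsmooth}, and then reduce the first displayed composition to Lemma \ref{tracefact}; your packaging (compose $(\Id{\C},\alpha)$ with $(f,\id)$ in $\bendo{\DGCattwo}$ and apply the lemma once to $(f,f\alpha)$) and the paper's (apply the lemma to $(f,\id)$ and slide $\alpha$ past $\eta$ by naturality) amount to the same computation. The one substantive difference is in the second half: the paper constructs $\tilde\eta$ explicitly as a composite built from $\evL{\D}$, the counit of the evaluation adjunction, and the unit $k \to \ev{\C}\circ\evL{\C}(k)$ (and it is this explicit map \eqref{shriekunit} that is later cited in the definition of relative Calabi--Yau structure), whereas you define $\tilde\eta$ by its classifying property under the corepresentability $\Tr{\Psi}\simeq\Hom{\endo{\D}}{\Inv{\D}}{\Psi}$; unwinding that adjunction shows the two definitions agree, and for the present proposition your version suffices, though it renders the final identification of the image of $\alpha$ essentially definitional, with all the content concentrated (as you correctly flag) in the naturality of the corepresentability isomorphism in the endofunctor argument.
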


\begin{proof}
First note that $\Tr{\Inv{\C}}=\ev{\C} \circ \evL{\C}(k)$, so there is a natural unit $k \rightarrow \Tr{\Inv{\C}}$. After suspension, that gives the first arrow. Then by adjunction, the composition $k[i] \rightarrow \Tr{\Inv{\C}}[i] \stackrel{\Tr{\alpha}}\rightarrow \Tr{\Id{\C}}$ identifies with the original Hochschild chain $k[i] \rightarrow \Tr{\Id{\C}}$. Now using Lemma \ref{tracefact}, and the naturality of $\eta : \Id{\C} \rightarrow f^{r}f$, we obtain the commutative diagram
\begin{equation*}
\xymatrix{k[i] \ar[r] & \Tr{\Inv{\C}[i]} \ar[r] \ar[d]& \Tr{\Id{\C}} \ar[d] \\
                              & \Tr{ \Inv{\C}f^{r}f}[i] \ar@{}[d]|*=0[@]{\simeq} \ar[r] & \Tr{f^{r}f} \ar@{}[d]|*=0[@]{\simeq} \\
                              & \Tr{f \Inv{\C} f^{r}}[i] \ar[r] & \Tr{ff^{r}} \ar[r] & \Tr{\Id{\D}}}
\end{equation*}

Now suppose both $\C$ and $\D$ are smooth. Since they are in particular dualisable, we have a natural transformation $\ev{\C} \rightarrow \ev{\D} \circ (f^{r})^{\vee} \otimes f$.
Applying $\evL{\D}$ on the left and $\evL{\C}$ on the right of this natural transformation, we obtain
a map $\evL{\D} \circ \ev{\C} \circ \evL{\C} \rightarrow \evL{\D} \circ \ev{\D} \circ (f^{r}) \otimes f \circ \evL{\C}$.
Post-composing with the counit $\evL{\D} \circ \ev{\D} \rightarrow \Id{\D}$, we obtain a map 
\begin{equation}\label{sechalf}
\evL{\D} \circ \ev{\C} \circ \evL{\C} \rightarrow (f^{r}) \otimes f \circ \evL{\C}
\end{equation} 
Since $\C$ is smooth, we have a unit $k \rightarrow \ev{\C} \circ \evL{\C}(k)=\endo{\Inv{\C}}$. Applying
$\evL{\D}$ on the left of this unit, we obtain a map 
\begin{equation}\label{frsthalf}
\evL{\D}(k) \rightarrow  \evL{\D} \circ \ev{\C} \circ \evL{\C}(k).
\end{equation} 
Composing \ref{sechalf} and \ref{frsthalf} and using the usual identifications, we obtain the desired unit
\begin{equation}\label{shriekunit}
\Inv{\D} \stackrel{\tilde{\eta}}\longrightarrow f \Inv{\C} f^{r}.
\end{equation}
The claim about the image of $\alpha$ then follows as in the case of $C$ smooth and $D$ dualisable.

\end{proof}

Our main interest is in computing the Hochschild map $\HH{\C} \rightarrow \HH{\A}$ induced
by a continuous adjunction  $f: \C \leftrightarrow \A : f^{r}$ with smooth source and rigid target. By Corollary \ref{corep},
the induced $A$-linear functor $F=f_{\A}: C_{\A} \rightarrow \A$ has a left adjoint $F^{l}: \A \rightarrow \C_{\A}$ and 
$F$ is corepresentable by $F^{l}(\unit{\A})=E \in \C_{\A}$: $F \simeq \Homun{\A}{E}{-}$. Thus given a Hochschild class $k[i] \rightarrow \HH{\C}$ adjoint to a natural transformation $\Inv{\C}[i] \stackrel{{\alpha}}\rightarrow \Id{\C}$, we
get an induced natural transformation $\Inv{\C_{\A}/\A}[i] \stackrel{{\alpha}_{\A}}\rightarrow \Id{\C_{\A}}$ and hence an induced natural transformation $F\Inv{\C_{\A}}F^{r}[i] \stackrel{F {\alpha}_{\A} F^{r}}\rightarrow F F^{r}$. Post-composing with the counit $FF^{r} \rightarrow \Id{\A}$ and applying the tensor product $\mult{A} : \A \otimes \A \rightarrow \A$, we obtain a composition 
\[
F \Inv{\C_{\A}} F^{r}(\unit{\A})[i] \stackrel{{\alpha}_{\A}}\rightarrow F F^{r}(\unit{\A}) \rightarrow \mult{\A} \mult{A}^{r}(\unit{\A}).
\] 
Using the isomorphisms  $\Trr{\A}{F\Inv{C_{\A}} F^{r}} \simeq F F^{l}(\unit{\A}) \simeq  \Endun{\A}{E}$ and $\Trr{\A}{F F^{r}}\simeq F F^{r}(\unit{\A}) \simeq \Endun{\A}{E}^{\vee}$ from Corollary \ref{corep}, we obtain the composition 
\begin{equation}
\label{localhochmap}
\unit{\A}[i] \rightarrow \Endun{\A}{E}[i] \rightarrow \Endun{\A}{E}^{\vee} \rightarrow  \mult{\A} \mult{\A}^{r}({\unit{\A}}). 
\end{equation}
where $\unit{\A}[i] \rightarrow \Endun{\A}{E}[i]$ is the shifted unit map. Note that under the isomorphism $\Endun{\A}{E}^{\vee} \simeq \Hom{\A}{\Inv{\C_{\A}/\A}(E)}{E}$, the map $\Endun{\A}{E}[i] \rightarrow \Endun{\A}{E}^{\vee} \simeq \Homun{\A}{\Inv{\C_{\A}/\A}(E)}{E}$ identifies with $\Hom{\A}{-}{E}$ applied to $\alpha_{\A}: \Inv{\C_{\A}/\A}[i] \rightarrow \Id{\C_{\A}}$ evaluated on $E$.

\begin{prop}\label{hochrig}

Given a continuous adjunction $F: \C \leftrightarrow \A : F^{r}$ with smooth source and rigid
target, the image of a Hochschild chain adjoint to $\alpha : \Inv{\C}[i] \rightarrow \Id{\C}$ under
the induced map $\HH{\C} \rightarrow \HH{\A}$ is obtained by applying the functor
$\Hom{\A}{\unit{\A}}{-}: \A \rightarrow \Vect$ to the composition \eqref{localhochmap} and
precomposing with the unit $k \rightarrow \End{\A}{\unit{\A}}$. 
\end{prop}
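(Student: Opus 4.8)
The plan is to combine the explicit trace formula of Proposition \ref{hochfact} with the self-duality of the rigid target $\A$, turning the absolute traces appearing there into the relative endomorphism objects of \eqref{localhochmap}. Since $\A$ is rigid it is in particular dualisable, so the first part of Proposition \ref{hochfact} applies to $f$ and identifies the image under $\HH{\C}\to\HH{\A}$ of the chain adjoint to $\alpha$ with the composition
\[
k[i] \to \Tr{\Inv{\C}}[i] \to \Tr{f\Inv{\C}f^{r}}[i] \stackrel{\Tr{f\alpha f^{r}}}\to \Tr{ff^{r}} \stackrel{\Tr{\varepsilon}}\to \Tr{\Id{\A}}=\HH{\A}.
\]
The last three terms are absolute traces of endofunctors of $\A$, and the task is to rewrite them through the corepresenting object $E$.

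To do so I would use rigidity to factor the trace. Under the self-duality $\A\simeq\A^{\vee}$ furnished by $\ev{\A}=\Hom{\A}{\unit{\A}}{\mult{\A}(-)}$, the object of $\A\otimes\A$ corresponding to $\Id{\A}$ under $\endo{\A}\simeq\A^{\vee}\otimes\A\simeq\A\otimes\A$ is $\mult{\A}^{r}(\unit{\A})$, so the trace formula \eqref{tracephi} reads, naturally in an endofunctor $G'$ of $\A$,
\[
\Tr{G'} \simeq \Hom{\A}{\unit{\A}}{\mult{\A}\bigl((\Id{\A}\otimes G')\,\mult{\A}^{r}(\unit{\A})\bigr)}.
\]
In particular $\HH{\A}\simeq\Hom{\A}{\unit{\A}}{\mult{\A}\mult{\A}^{r}(\unit{\A})}$, the value of $\Hom{\A}{\unit{\A}}{-}$ on the target of \eqref{localhochmap}.

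The crux is then immediate from the factorisations $F=\mult{\A}\circ(\Id{\A}\otimes f)$ and $F^{r}=(\Id{\A}\otimes f^{r})\circ\mult{\A}^{r}$ of Lemma \ref{indadj}: for any endofunctor $G$ of $\C$,
\[
\mult{\A}\bigl((\Id{\A}\otimes fGf^{r})\,\mult{\A}^{r}(\unit{\A})\bigr) \simeq F\,(\Id{\A}\otimes G)\,F^{r}(\unit{\A}).
\]
Taking $G=\Inv{\C}$, so that $\Id{\A}\otimes\Inv{\C}\simeq\Inv{\C_{\A}/\A}$, the isomorphisms \eqref{endisos} of Corollary \ref{corep} identify the right-hand side with $\Endun{\A}{E}$, and taking $G=\Id{\C}$ identifies it with $\Endun{\A}{E}^{\vee}$. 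Feeding these into the factorisation above rewrites the last three terms of the composition as $\Hom{\A}{\unit{\A}}{-}$ applied to $\Endun{\A}{E}[i]\to\Endun{\A}{E}^{\vee}\to\mult{\A}\mult{\A}^{r}(\unit{\A})$, the tail of \eqref{localhochmap}; the opening $k[i]\to\Tr{\Inv{\C}}[i]\to\Tr{f\Inv{\C}f^{r}}[i]$ matches $\Hom{\A}{\unit{\A}}{-}$ of the shifted algebra unit $\unit{\A}[i]\to\Endun{\A}{E}[i]$ precomposed with the unit $k\to\End{\A}{\unit{\A}}$.

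I expect the main obstacle to be the bookkeeping in this final matching. One must check that naturality of the factorisation carries $\Tr{f\alpha f^{r}}$ to $F\alpha_{\A}F^{r}(\unit{\A})$, that this in turn agrees with $\Hom{\A}{-}{E}$ applied to $\alpha_{\A}$ evaluated on $E$ under $\Endun{\A}{E}^{\vee}\simeq\Homun{\A}{\Inv{\C_{\A}/\A}(E)}{E}$ (as asserted just before the proposition), and that $\Tr{\varepsilon}$ becomes the counit $FF^{r}\to\Id{\A}$ assembled with the unit of $\mult{\A}\dashv\mult{\A}^{r}$, together with the compatibility of the trace unit $k\to\Tr{\Inv{\C}}$ with the algebra unit $\unit{\A}\to\Endun{\A}{E}$. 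Each follows from naturality and the explicit self-duality of $\A$, but keeping the various (co)units coherent is the delicate point.
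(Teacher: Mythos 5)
Your proposal is correct, and it reaches the same identifications as the paper, but it is organised differently. The paper's proof stays one level up: it invokes the naturality of the trace functor under the induction/restriction adjunction between $k$-linear and $\A$-linear dg categories (Lemma \ref{trnat}) to base-change the whole computation to the $\A$-linear world, computes the relative trace there via the $\A$-linear adjunction $F : \C_{\A} \leftrightarrow \A$ and Corollary \ref{corep}, and then observes that $\res{\A}{k} = \Hom{\A}{\unit{\A}}{-}$ restricts everything back. You instead remain entirely $k$-linear: you run Proposition \ref{hochfact} for $f : \C \to \A$ and then evaluate the resulting absolute traces of endofunctors of $\A$ using the explicit rigid self-duality data $\ev{\A} = \Hom{\A}{\unit{\A}}{\mult{\A}(-)}$, $\co{\A}(k) = \mult{\A}^{r}(\unit{\A})$, together with the factorisations $F = \mult{\A}\circ(\Id{\A}\otimes f)$ and $F^{r} = (\Id{\A}\otimes f^{r})\circ\mult{\A}^{r}$ from Lemma \ref{indadj}. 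The two routes are related by the general fact that for rigid $\A$ the $k$-linear trace is $\Hom{\A}{\unit{\A}}{-}$ applied to the $\A$-linear trace; your formula $\Tr{G'} \simeq \Hom{\A}{\unit{\A}}{\mult{\A}((\Id{\A}\otimes G')\mult{\A}^{r}(\unit{\A}))}$ is precisely this fact made explicit. What the paper's route buys is that the compatibility you flag as the delicate point --- that the opening $k[i]\to\Tr{\Inv{\C}}[i]\to\Tr{f\Inv{\C}f^{r}}[i]$ matches the algebra unit $\unit{\A}[i]\to\Endun{\A}{E}[i]$ under your identifications, and that $\Tr{f\alpha f^{r}}$ and $\Tr{\varepsilon}$ become the expected maps --- is absorbed into the (already established) naturality of the trace under a symmetric monoidal $2$-functor, so no further check is needed; in your version these compatibilities must be verified by hand from the coherence of the rigid duality data, which is routine but is genuinely the content being swept under the rug. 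What your route buys is concreteness: it makes visible exactly which object of $\A$ each trace is, without appealing to the $2$-categorical machinery of relative traces.
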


\begin{proof}
Using the above isomorphisms and naturality of trace with respect to induction and restriction between $k$-linear 
and $A$-linear dg categories, we obtain a commutative diagram
\[
\xymatrix{
\Trr{k}{\Inv{\C}}[i] \ar[r] \ar[d] & \Trr{k}{\Id{\C}} \ar[d] \\
\res{\A}{k} \Trr{\A}{\Inv{\C_{\A}}}[i] \ar[r] \ar[d] & \res{\A}{k} \Trr{\A}{\Id{\C_{\A}}} \ar[d] \\
\res{\A}{k} \Trr{\A}{F\Inv{\C_{\A}}F^{r}}[i] \ar[r] \ar@{=}[d] &  \res{\A}{k} \Trr{\A}{F F^{r}} \ar@{=}[d] \\
\res{\A}{k} \Endun{\A}{E}[i] \ar[r] & \res{\A}{k} \Endun{\A}{E}^{\vee} \ar[r] & \res{\A}{k} \mult{\A} \mult{\A}^{r}({\unit{\A}}) \simeq \Trr{k}{\Id{\A}}
}
\]
Finally, note that the restriction functor $\res{\A}{k} : \A \rightarrow \Vect$ is just $\Hom{k}{\unit{\A}}{-}$
\end{proof}

\subsection{Functions on the loop space and Hochschild chains}

In order to encode the functoriality of base change maps \eqref{basechange}, it is best to use the $2$-category $\Corr$ of {\bf correspondences} with the symmetric monoidal structure induced by the Cartesian monoidal structure on affine schemes $\Aff$. At the level of homotopy categories, the objects of $\Corr$ are just affine schemes, a $1$-morphism in $\Corr$ from $U$ to $V$ is a correspondence
\[
\xymatrix{Z \ar[r]^{f} \ar[d]^{g} & U \\
V},
\]
and a $2$-morphism is a commutative diagram
\[\xymatrixrowsep{2.5pc}
\xymatrixcolsep{2.5pc}
\xymatrix{Z \ar[dr]^{h} \ar[drr]^{f} \ar[ddr]^{g} \\ & Z^{'} \ar[r]^{f^{'}} \ar[d]^{g^{'}}  & U \\
& V}
\]
with $h$ proper.

Composition of $1$-morphisms is given by pullback:
\[
\xymatrix{Z^{'} \times_{V} Z \ar[d] \ar[r]& Z \ar[r] \ar[d] & U \\
Z^{'} \ar[d] \ar[r] & V \\
W},
\]

It is easy to check that all objects $U \in \Corr$ are dualisable, with evaluation and coevaluation
\[
\xymatrix{U \ar[r]^-{\Delta} \ar[d] & U \times U & {} \\ {*} }
\xymatrix{U \ar[r] \ar[d]^{\Delta} & {*} \\ U \times U}
\]

Applying the formalism of traces from subsection \ref{subtraces}, we obtain that the trace of $\Id{U}$ in $\Corr$
is the correspondence
\[
\xymatrix{U \times_{U \times U} U \ar[r] \ar[d] & U \ar[d]^{\Delta} \ar[r] & {*} \\
U \ar[r]^-{\Delta} \ar[d] & U \times U \\
{*}}
\]
and is endowed with a natural $S^{1}$-action. Decomposing the circle $S^{1}$ into two intervals glued along their endpoints, one obtains an identification $\Mapun{S^{1}}{U} \simeq U \times_{U \times U} U \simeq \Trr{\Corr}{\Id{U}}$, and one can identify the natural $S^{1}$-action on $\Trr{\Corr}{\Id{U}}$ with `loop rotation' on $\Mapun{S^{1}}{U}$.

\begin{rem}
The formalism of correspondences makes sense for more general prestacks, usually with some restrictions on the arrows, but we shall only need to use it for affine schemes.
\end{rem}

As noted in \cite{gr1} 5.5.3, base change isomorphisms for ${\rm QCoh}$ give rise to a symmetric monoidal functor between $2$-categories

\begin{equation}
{\rm QCoh} : \Corr \rightarrow (\DGCattwo)^{\rm 2-op}
\end{equation}

Concretely, ${\rm QCoh} : \Corr \rightarrow (\DGCattwo)^{\rm 2-op}$ takes an object $U$ to $\QCoh{U}$, a morphism $V \stackrel{g}\leftarrow Z \stackrel{f} \rightarrow U$ to the functor $g_{*}f^{*} : \QCoh{U} \rightarrow \QCoh{V}$, and a $2$-morphism $h : Z \rightarrow Z^{'}$ to a natural transformation $g^{'}_{*}{f^{'}}^{*} \Rightarrow {g^{'}}_{*}h_{*}h^{*} {f^{'}}^{*} \simeq g_{*}f^{*}$ induced by the unit $\Id{Z^{'}} \Rightarrow h_{*}h^{*}$. 

\begin{rem}Note the contravariance between $h$ and the induced natural transformation. This is the reason for the `2-op' in $(\DGCattwo)^{\rm 2-op}$. Note that the `2-op' affects only the direction of functoriality of trace,
not the trace itself.
\end{rem}

We end this section with a comparison of geometrically and algebraically defined $S^{1}$-actions.

\begin{thm}\label{circacts}
For an affine scheme $U$, there is a natural isomorphism of $S^{1}$-complexes
\[
\Gamma(LU,\O_{LU}) \simeq \HH{\QCoh{U}}
\]
where the left-hand side has the $S^{1}$-action coming from the identification $LU=\Trr{\Corr}{\Id{U}}$ and the right-hand side has the $S^{1}$-action coming from the identification $\HH{\QCoh{U}}=\Trr{\DGCattwo}{\Id{\QCoh{U}}}$.
\end{thm}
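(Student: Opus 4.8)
The plan is to deduce the statement directly from the $S^{1}$-equivariant naturality of the trace functor recorded in Proposition \ref{circleeq}, applied to the symmetric monoidal functor ${\rm QCoh}: \Corr \to (\DGCattwo)^{\rm 2-op}$ built from base change. No new computation is required beyond invoking this naturality and then unwinding what ${\rm QCoh}$ does on the relevant $\Omega$-categories.

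First I would apply Proposition \ref{circleeq} to $F = {\rm QCoh}$ and the canonical $S^{1}$-fixed point $\Id{U} \in \bAut{\Corr}$, whose fixedness is the one attached to an identity endomorphism. This yields an $S^{1}$-equivariant equivalence
\[
\QCoh{\Trr{\Corr}{\Id{U}}} \simeq \Trr{(\DGCattwo)^{\rm 2-op}}{\QCoh{\Id{U}}},
\]
where the target lives in $\Omega(\DGCattwo)^{\rm 2-op}$ equipped with the trivial $S^{1}$-action. The identity correspondence on $U$ is sent by ${\rm QCoh}$ to $\Id{\QCoh{U}}$, and by the remark following the definition of ${\rm QCoh}$ the passage to the ``$2$-op'' reverses only the direction of functoriality, not the trace object itself; hence the right-hand side is identified $S^{1}$-equivariantly with $\Trr{\DGCattwo}{\Id{\QCoh{U}}} = \HH{\QCoh{U}}$.

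Next I would identify the left-hand side. Recall that $\Trr{\Corr}{\Id{U}}$ is the self-intersection $LU = \Mapun{S^{1}}{U}$ regarded as an endomorphism $* \leftarrow LU \rightarrow *$ of the unit object $* \in \Corr$, that is, as an object of $\Omega \Corr$. The functor ${\rm QCoh}$ sends such a correspondence $* \stackrel{g}\leftarrow Z \stackrel{f}\rightarrow *$ to $g_{*}f^{*}$, so on the unit $k$ it returns $g_{*}f^{*}(k) = \Gamma(Z, \O_{Z})$; applied to $Z = LU$ this gives exactly $\Gamma(LU, \O_{LU})$. Since the $S^{1}$-action on $\Trr{\Corr}{\Id{U}}$ has already been identified, just before the theorem, with loop rotation on $LU$, the resulting $S^{1}$-complex is precisely the left-hand side of the theorem, and combining the two identifications produces the asserted $S^{1}$-equivariant isomorphism.

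The only genuine content beyond the formal invocation of naturality is the bookkeeping of the $S^{1}$-actions through the ``$2$-op'', and I expect this to be the most delicate point. The key thing to check is that reversing the direction of $2$-morphisms affects only the induced maps of traces and leaves unchanged the $S^{1}$-equivariant structure on the trace of a fixed identity endomorphism; this follows because the $S^{1}$-fixed structure on $\Id{U}$ is produced from the oplax-corepresentability of $\bAut{\Corr}$ by $\vrig{S^{1}}$, a construction insensitive to the orientation carried by the ``$2$-op''. Any residual discrepancy amounts at worst to an orientation-reversal of $S^{1}$, which is harmless for the claimed isomorphism of $S^{1}$-complexes.
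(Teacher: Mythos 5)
Your proposal is correct and follows exactly the paper's argument: the paper's entire proof is to apply the naturality of $S^{1}$-actions from Proposition \ref{circleeq} to the symmetric monoidal functor ${\rm QCoh} : \Corr \rightarrow (\DGCattwo)^{\rm 2-op}$, and your unwinding of the two sides (identifying ${\rm QCoh}$ of the self-intersection correspondence with $\Gamma(LU,\O_{LU})$, and noting that the ``2-op'' affects only the direction of functoriality of the trace, not the trace itself) matches the surrounding discussion in the paper.
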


\begin{proof}
Apply the naturality of $S^{1}$-actions from Proposition \ref{circleeq} to the symmetric monoidal 
functor ${\rm QCoh} : \Corr \rightarrow (\DGCattwo)^{\rm 2-op}$.
\end{proof}

\section{Shifted symplectic and Lagrangian structures on the moduli of objects}\label{sympstr}

\subsection{Graded $S^{1}$-complexes}

Given a group prestack $G$, recall that its classifying prestack is the geometric realisation of the corresponding simplicial prestack: $BG : = | \cdots  G \times G \triparr G \doubarr *|$. The dg-category of representations of $G$ is by definition the category of quasi-coherent sheaves on the classifying prestack $BG$: 
$\Rep{G} := \QCoh{BG}$.

Consider the quotient map $* \stackrel{q}\rightarrow BG$ and the map to a point $BG \stackrel{\pi}\rightarrow *$. We have adjoint pairs of functors
\begin{eqnarray*} q^{*} : \Rep{G}=\QCoh{BG} \leftrightarrow \Vect: q_{*} \\ \pi^{*} : \Vect \leftrightarrow \QCoh{BG}=\Rep{G} : \pi_{*}.
\end{eqnarray*}
In terms of representations, $q^{*}$ forgets the $G$-action, $q_{*}$ coinduces from the trivial group, $\pi^{*}$ gives the trivial representation, and $\pi_{*}$ takes $G$-invariants. For $G$ sufficiently nice, the right adjoints are continuous.

More generally, given a map between group prestacks $\varphi: G_{1} \rightarrow G_{2}$, we have an induced map $f :BG_{1} \rightarrow BG_{2}$ of classifying prestacks. In good circumstances, we have a continuous adjunction
$f^{*} : \Rep{G_{2}}=\QCoh{BG_{2}} \longleftrightarrow \QCoh{BG_{1}}: f_{*}$, which we refer to as {\bf restriction} and {\bf coinduction} of representations. \footnote{For classical group schemes, these functors correspond to the usual (derived) restriction and coinduction functors.}

In particular, consider the abelian group $S^{1}$ in $\Prstk$. We define an {\bf $S^{1}$-complex} to be a quasi-coherent sheaf on the classifying prestack $BS^{1}$. \footnote{It is easy to show that this category of  $S^{1}$-complexes is equivalent to others in the literature, for example, with the category of functors $\Fun{BS^{1}}{\Vect}$.} By \cite{bznloops} Corollary 3.11, applying $B$ to the affinisation map\footnote{Given a prestack $X$, the affinisation of $X$ is by definition the prestack $\Mapp{\calg}{\Gamma(X,\O_{X})}{-}: \calg^{\leq 0} \rightarrow \Spc$. It is not hard to show that the affinisation of $S^{1}$ is $B\Ga$. See \cite{bznloops}, Lemma 3.13.} $S^{1} \rightarrow B\Ga$ induces an equivalence under pullback
\[
\QCoh{B^{2}\Ga} \simeq \QCoh{BS^{1}}.
\]
We may therefore identify $S^{1}$-complexes with $B\Ga$-complexes, and we freely do so. We shall also be interested in {\bf graded $S^{1}$-complexes}, which by definition are objects of $\QCoh{B(B\Ga \rtimes \Gm)}$. \footnote{One can show that restriction of representations along $\Gm \rightarrow B\Ga \rtimes \Gm$ is conservative and preserves limits, so restriction/coinduction is comonadic in this case. Thus we may identify $\QCoh{B(B\Ga \rtimes \Gm)}$ with certain comodules in $\QCoh{B\Gm}$. One can use this to identify objects of $\QCoh{B(B\Ga \rtimes \Gm)}$ with $S^{1}$-complexes in $\QCoh{B\Gm}$, hence the name `graded $S^{1}$-complex'.}

Using the pullback square
\begin{equation}
\label{grpfibseq} 
\xymatrix{B^{2}\Ga \ar[r]^-{i} \ar[d]^{\pi} & B(B\Ga \rtimes \Gm) \ar[d]^{p} \\
{*} \ar[r]^{q} & B\Gm}
\end{equation}
and the section $j : B\Gm \rightarrow B(B\Ga \rtimes \Gm)$ of $p : B(B\Ga \rtimes \Gm) \rightarrow B\Gm$,
we can define various complexes and maps of complexes functorially associated to (graded) $S^1$-complexes.\footnote{Achtung: Quasi-coherent base change {\it does not} hold for the pullback square \ref{grpfibseq}.}
By definition, the {\bf negative cyclic complex} $\NC{E}$ of an $S^{1}$-complex $E \in \QCoh{B^{2}\Ga}$ is
the complex of $B\Ga$-invariants:
\[
\NC{E}:= \pi_{*}E \in \Vect.
\]
Similarly, given a graded $S^{1}$-complex $F \in \QCoh{B(B\Ga \rtimes \Gm)}$, we define its {\bf weight-graded negative cyclic complex} as the pushforward to $B\Gm$:
\[
\NCw{F}:= p_{*}F \in \Vect^{gr} \simeq \QCoh{B\Gm}.
\]

While the functors $q^* : \QCoh{B\Gm} \rightarrow \Vect$ and $i^*: \QCoh{B(B\Ga \rtimes \Gm)} \rightarrow \QCoh{B^{2}\Ga}$ are given concretely by {\it summing} over the weight-graded components of a graded (mixed) complex, for our purposes it will be more relevant to take the {\it product} over the weight-graded components. More formally, we note that the right adjoint functors $q_*: \Vect \rightarrow \QCoh{B\Gm}$ and $i_*: \QCoh{B^{2}\Ga} \rightarrow \QCoh{B(B\Ga \rtimes \Gm)}$ can be shown to be continuous and satisfy the projection formula (using \cite{dringaits}, Corollary 1.4.5, and the fact that the morphisms are qca), and hence themselves admit (non-continuous) right adjoints $(q_*)^r$ and $(i_*)^r$, which concretely are given by taking the product over weight-graded components. There are natural transformations 
\begin{align}
\begin{split}
& q^* \Rightarrow (q_*)^r \\
& i^* \Rightarrow (i_*)^r
\end{split}
\end{align}
concretely given by mapping the direct sum to the direct product. More precisely, the natural transformation $q^* \Rightarrow (q_*)^r$ is adjoint to a natural transformation $q_* q^* \Rightarrow \Id{\QCoh{B\Gm}}$
induced via the projection formula from the natural map $q_* q^* \O_{B\Gm} \rightarrow \O_{B\Gm}$ corresponding to the projection $k[t,t^{-1}] \rightarrow k$ of the regular representation onto the trivial representation. An analogous construction gives the natural transformation $i^* \Rightarrow (i_*)^r$.

The above long song and dance leads to the following simple and important observations.

\begin{lem}\label{pthcomp} 
Given a graded mixed complex $E \in \QCoh{B(B\Ga \rtimes \Gm)}$, there is a natural map
\[
\NC{i^{*}E} \rightarrow \prod_p \NCw{E}(p) 
\]
and so in particular a natural `pth component' map 
\begin{equation}\label{pthcomp}
\NC{i^{*}E} \rightarrow \NCw{E}(p)
\end{equation}
for each $p$.

Moreover, applying $p_{*}$  to the unit $\Id{\QCoh{B(B\Ga \rtimes \Gm)}} \Rightarrow j_* j^*$, we obtain a natural transformation $p_* \Rightarrow j^*$. Passing to weight-graded components, we obtain for each $p$ a natural map
\[
\NCw{E}(p) \rightarrow E(p).
\]

\end{lem}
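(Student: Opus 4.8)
The plan is to assemble both maps directly from the adjunction data prepared just above the statement, handling the two assertions in turn.

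For the first assertion I would apply $\pi_{*}$ to the natural transformation $i^{*} \Rightarrow (i_{*})^{r}$ (the ``sum to product'' comparison), obtaining
\[
\NC{i^{*}E} = \pi_{*}i^{*}E \longrightarrow \pi_{*}(i_{*})^{r}E ,
\]
and then identify the target with $\prod_{p}\NCw{E}(p)$. Concretely, $(i_{*})^{r}E$ is the \emph{product} $\prod_{p}E(p)$ of the weight-graded pieces regarded as a single $B\Ga$-complex, and since $\pi_{*}$ (formation of $B\Ga$-invariants) is a limit it commutes with this product; thus $\pi_{*}(i_{*})^{r}E \simeq \prod_{p}\pi_{*}(E(p)) \simeq \prod_{p}\NCw{E}(p)$, the last step being the identification $\NCw{E}(p) \simeq \pi_{*}(E(p))$. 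Invariantly, this is the Beck--Chevalley mate $\pi_{*}(i_{*})^{r} \simeq (q_{*})^{r}p_{*}$ of the square \ref{grpfibseq} (built from $p_{*}i_{*} \simeq q_{*}\pi_{*}$ and the unit/counit of $i_{*} \dashv (i_{*})^{r}$ and $q_{*} \dashv (q_{*})^{r}$), combined with $(q_{*})^{r}p_{*}E = (q_{*})^{r}\NCw{E} = \prod_{p}\NCw{E}(p)$. Composing gives $\NC{i^{*}E} \to \prod_{p}\NCw{E}(p)$, and post-composing with the projection onto the $p$-th factor yields the $p$-th component map.

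The step I expect to require the most care is exactly this identification, since the footnote to \ref{grpfibseq} warns that base change fails for the square. The resolution is that the statement which fails is the ``sum'' base change $q^{*}p_{*} \simeq \pi_{*}i^{*}$ --- false because $B\Ga$-invariants do not commute with the infinite direct sum computing $i^{*}$ --- whereas the ``product'' variant $\pi_{*}(i_{*})^{r} \simeq (q_{*})^{r}p_{*}$ that I use does hold, precisely because $\pi_{*}$, $(i_{*})^{r}$ and $(q_{*})^{r}$ are now all limits (products/invariants). Isolating which of the two base-change squares is available is the only real content of the first part; note that the first displayed map need not be an isomorphism, but the lemma asks only for a map.

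For the second assertion I would use that $j$ is a section of $p$, so $p \circ j \simeq \Id{B\Gm}$ and hence $p_{*}j_{*} \simeq \Id{\QCoh{B\Gm}}$. Applying $p_{*}$ to the unit $\Id{\QCoh{B(B\Ga \rtimes \Gm)}} \Rightarrow j_{*}j^{*}$ of the adjunction $j^{*} \dashv j_{*}$ gives $p_{*} \Rightarrow p_{*}j_{*}j^{*} \simeq j^{*}$, the desired natural transformation $p_{*} \Rightarrow j^{*}$. Evaluating on $E$ and passing to the weight-$p$ component then produces $\NCw{E}(p) = (p_{*}E)(p) \to (j^{*}E)(p) = E(p)$, concretely the canonical map $\pi_{*}(E(p)) \to E(p)$ forgetting $B\Ga$-invariance. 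This part uses no base change and is immediate.
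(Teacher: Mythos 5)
Your proof is correct and takes essentially the same route as the paper: the paper gives no separate proof of this lemma, deriving both maps directly from the natural transformation $i^* \Rightarrow (i_*)^r$ (sum-to-product) and the section $j$ of $p$ constructed in the immediately preceding paragraph, which is exactly what you do. Your reading of the footnote — that the failing base change is the sum-variant $q^*p_* \simeq \pi_* i^*$ while the product-variant $\pi_*(i_*)^r \simeq (q_*)^r p_*$ survives because all functors involved are limits — is the correct resolution of the only delicate point.
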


\subsection{Closed differential forms}

Given an affine scheme $U$,  the map $S^{1} \rightarrow B\Ga$ induces an equivalence $\Map{B\Ga}{U} \simeq \Map{S^{1}}{U}=LU$, by definition of affinisation. The action of $B\Ga \rtimes \Gm$ on $B\Ga$ 
then induces an action of $B\Ga \rtimes \Gm$ on $LU$ and hence the functions on $LU$ carry a natural structure of graded $S^{1}$-module.\footnote{For a more detailed discussion in the not necessarily affine case, see Section 4 of \cite{bznloops}.} More formally, $LU$ is a $B\Ga \rtimes \Gm$-space, and we have a fibre square
\[
\xymatrix{LU \ar[d]^{r} \ar[r]^{p} & {*} \ar[d]^{q} \\
\widetilde{LU} \ar[r]^-{\pi} & B(B\Ga \rtimes \Gm)}
\]
where $\widetilde{LU}  \simeq LU/B\Ga \rtimes \Gm$. One can check that $q$ is a `good' morphism
\footnote{More precisely, $q$ is a `qca' morphism in the sense of \cite{dringaits}, since its fibre
is $B\Ga \rtimes \Gm$, which is qca.}, so that
base change in this fibre square gives an isomorphism
\[
\Gamma(LU,\O_{LU}) \simeq p_{*}\O_{LU} \simeq p_{*}r^{*}\O_{\widetilde{LU}} \simeq q^{*}\pi_{*} \O_{\widetilde{LU}}.
\]
We thus obtain a direct sum decomposition
\[
\Gamma(LU,\O_{LU}) = \bigoplus_p \Gamma(LU,\O_{LU})(p)
\]
into weight-graded components. On the other hand, we have isomorphisms
\begin{align*}
& \Gamma(LU,\O_{LU}) \simeq \Hom{\QCoh{LU}}{\O_{LU}}{\O_{LU}} \simeq \Hom{\IndCoh{LU}}{\omega_{LU}}{\omega_{LU}} \simeq \\
&  \Hom{U}{\pi_{*}\omega_{LU}}{\omega_{U}} \simeq \prod_{p} \Gamma(U,\Wedge^{p}\Cotang{U}[p]), 
\end{align*}
where the last isomorphism uses \ref{pbw} and base change along the diagonal $\Delta: U \rightarrow U \times U$. Altogether, we obtain an identification 
\[
 \Gamma(LU,\O_{LU})(p) \simeq \Gamma(U,\Wedge^{p}\Cotang{U}[p])
\]
of the weight-graded components of the functions on $LU$.\footnote{The fact that the direct sum and direct product agree depends on the fact that $\Cotang{U}$ is connective.}

We introduce the following terminology, following \cite{ptvv}:

The {\bf space of $p$-forms of degree $n$} on an affine scheme $U$ is 
\[
\dfor{p}{U}{n}:=|\Gamma(LU,\O_{LU})(p)[n-p]| \simeq |\Wedge^{p}\Cotang{U}[n]|
\]
The {\bf space of closed $p$-forms of degree $n$} on $U$ is
\[
\cldfor{p}{U}{n}:=|\NCw{\Gamma(LU,\O_{LU})}(p)[n-p]|
\]
The natural map $\NCw{\Gamma(LU,\O_{LU}}(p) \rightarrow \Gamma(LU,\O_{LU})(p)$ from the second part of Lemma \ref{pthcomp} induces a map
\[\cldfor{p}{U}{n} \rightarrow \dfor{p}{U}{n}
\]
giving the `underlying $p$-form' of a closed $p$-form. The constructions being functorial in $U$, we obtain
a map of prestacks
\begin{equation}\label{formmap}
\cldfor{p}{-}{n} \rightarrow \dfor{p}{-}{n}
\end{equation}
on $\Aff$.

Following \cite{ptvv}, for a general laft-def prestack $X$, we define the {\bf space of closed $p$-forms} and the {\bf space of $p$-forms}, as well as the map between them, by applying $\Map{X}{-}$ to \ref{formmap}:

\[\cldfor{p}{X}{n}=\Map{X}{\cldfor{p}{-}{n}} \rightarrow \Map{X}{\dfor{p}{-}{n}}= \dfor{p}{X}{n}.
\]

We now give the central construction of this paper.

For a prestack $X$, we tautologically write $X=\colim_{(\Aff/X)} U$. Then
\[
\cldfor{p}{X}{n-p} := \Map{X}{\cldfor{p}{-}{p-n}} \simeq \lim_{(\Aff/X)^{\rm op}}  \cldfor{p}{U}{p-n} \simeq 
\lim_{(\Aff/X)^{\rm op}}  |\NCw{U}(p)[-n]|.
\]
The universal continuous adjunction $\Funi_{\C} : \C \longleftrightarrow \Ind{\Perf{\M_{\C}}} : \Funi^{r}_{\C}$ gives
an $S^{1}$-equivariant map $\HH{\C} \rightarrow \HH{\Ind{\Perf{\M_{\C}}}}$. Composing with the natural $S^{1}$-equivariant map $\HH{\Ind{\Perf{\M_{\C}}}} \rightarrow \lim_{(\Aff/X)^{\rm op}} \HH{\QCoh{U}} \simeq
\lim_{(\Aff/X)^{\rm op}} \Gamma(LU,\O_{LU})$, taking invariants, and using Lemma \ref{pthcomp}, we obtain for each $p$ a natural map 
\[
\NC{\C} \rightarrow \lim_{(\Aff/X)^{\rm op}} \NCw{\Gamma(LU,\O_{LU})}(p).
\]
Truncating and shifting gives a map $\tilde{\kappa}_p: |\NC{\C}[-n]| \rightarrow \cldfor{p}{\M_{\C}}{p-n}$.
Similarly, define a map $\kappa_{p} : |\HH{\C}[-n]| \rightarrow \dfor{p}{\M_{\C}}{p-n}$.
Functoriality of invariants and of the $p$th component map \ref{pthcomp} gives the following.

\begin{prop}\label{negtoclos}
For each  $n \in \ZZ, p \in \NN$, there is a commutative square of spaces
\[
\xymatrix{|\NC{\C}[-n]| \ar[d] \ar[r]^{\tilde{\kappa}_p} & \cldfor{p}{\M_{\C}}{p-n} \ar[d] \\
|\HH{\C}[-n]| \ar[r]^{\kappa_p} & \dfor{p}{\M_{\C}}{p-n} }
\]
In words: from a negative cyclic class $\alpha: k[n] \rightarrow \NC{\C}$ of degree $n$, we obtain for each $p$ 
a closed $p$-form $\tilde{\kappa}(\alpha)_p$ of degree $p-n$ on the moduli space $\M_{\C}$, and the underlying $p$-form is associated to the underlying Hochschild class.
\end{prop}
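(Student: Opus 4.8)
The plan is to exhibit the whole square as the image, under the functorial operation of geometric realisation with a shift, of a single commuting square of maps of complexes, which in turn is obtained by applying natural transformations of (graded) $S^{1}$-complexes to one common input. That input is the $S^{1}$-equivariant map
\[
\HH{\C} \longrightarrow \HH{\IndPerf{\M_{\C}}} \longrightarrow \lim_{(\Aff/\M_{\C})^{\op}} \HH{\QCoh{U}} \simeq \lim_{(\Aff/\M_{\C})^{\op}} \Gamma(LU,\O_{LU}),
\]
coming from the universal adjunction $\Funi_{\C}$ and Theorem~\ref{circacts}; write $E$ for the target, which is a limit of \emph{graded} $S^{1}$-complexes via the loop-space grading, and write $e:\ast\to B^{2}\Ga$ for the base point, so that $e^{*}i^{*}E$ is the underlying complex of the $S^{1}$-complex $i^{*}E$. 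Both $\tilde{\kappa}_{p}$ and $\kappa_{p}$ are built from this one map: $\tilde{\kappa}_{p}$ by taking $B\Ga$-invariants (i.e. forming $\NC{-}$) and then applying the $p$th-component map of Lemma~\ref{pthcomp}, and $\kappa_{p}$ by passing to $e^{*}i^{*}E$ and projecting onto its weight-$p$ summand.

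First I would identify the two vertical maps of the proposition. The left-hand one is the realisation of the forget-invariants map $\NC{\C} \to \HH{\C}$, i.e. of the counit $\pi_{*}(-) \to (-)$ for $\pi : B^{2}\Ga \to *$. The right-hand one is, by the very definition of the underlying-$p$-form map, the realisation of the second natural transformation of Lemma~\ref{pthcomp}, the map $\NCw{E}(p) \to E(p)$ induced by $p_{*} \Rightarrow j^{*}$. Since realisation, the passage to weight-$p$ components, and the formation of $\NC{-}$ are all functorial, the proposition's square is the realisation of the outer rectangle obtained from the equivariant map above followed by the square of complexes
\[
\xymatrix{
\NC{i^{*}E} \ar[r] \ar[d] & \NCw{E}(p) \ar[d] \\
e^{*}i^{*}E \ar[r] & E(p),
}
\]
where the top map is the $p$th-component map of Lemma~\ref{pthcomp}, the bottom map projects the underlying complex $e^{*}i^{*}E \simeq \bigoplus_{p} E(p)$ onto weight $p$, the left map $\pi_{*}i^{*}E \to e^{*}i^{*}E$ forgets $B\Ga$-invariants, and the right map is $\NCw{E}(p) \to E(p)$.

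It remains to check that this last square commutes, and this is where the only real content lies. Both composites compute ``forget the $B\Ga$-invariants and keep the weight-$p$ part'', differing only in the order of the two operations, so the assertion is that forming negative cyclic invariants commutes with isolating a single weight. I would verify this by unwinding both routes through the pullback square~\ref{grpfibseq} using the explicit structure maps $q^{*} \Rightarrow (q_{*})^{r}$, $i^{*} \Rightarrow (i_{*})^{r}$, and $p_{*} \Rightarrow j^{*}$ recorded just before Lemma~\ref{pthcomp}, after which commutativity is a formal diagram chase among these transformations and the relevant (co)units. The main obstacle is precisely the warning attached to \ref{grpfibseq}: quasi-coherent base change \emph{fails} for that square, so one cannot naively transport invariants past the weight projection, and must instead work with the product-over-weights right adjoints $(q_{*})^{r}$ and $(i_{*})^{r}$ that replace the naive pullbacks and verify that the two ways of forming ``invariants in a fixed weight'' are identified by exactly the coherences already set up in Lemma~\ref{pthcomp}. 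Granting this compatibility, naturality of realisation and of the $p$th-component map yields the commutative square, hence the closed $p$-form $\tilde{\kappa}(\alpha)_{p}$ together with the stated identification of its underlying $p$-form.
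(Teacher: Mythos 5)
Your proposal is correct and follows essentially the same route as the paper: both maps $\tilde{\kappa}_{p}$ and $\kappa_{p}$ are built from the single $S^{1}$-equivariant map $\HH{\C} \rightarrow \lim_{(\Aff/\M_{\C})^{\rm op}} \Gamma(LU,\O_{LU})$ supplied by the universal adjunction and Theorem \ref{circacts}, and the square then commutes by functoriality of invariants and of the $p$th-component map of Lemma \ref{pthcomp}. If anything, you spell out more explicitly than the paper the one compatibility that needs checking (that forming $B\Ga$-invariants commutes with isolating a single weight, using the product-over-weights right adjoints rather than naive base change), which is a welcome elaboration rather than a departure.
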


We now describe how to compute the $p$-forms on $\M_{\C}$ corresponding to Hochschild classes $k[n] \rightarrow \HH{C}$, in the case of a smooth dg category $\C$. Using the isomorphism \ref{hochsmooth}, we  represent a Hochschild class by a map of endofunctors $\alpha: \Inv{\C}[n] \rightarrow \Id{\C}$. Inducing the universal continuous adjunction $\Funi_{\C} : \C \longleftrightarrow \Ind{\Perf{\M_{\C}}}: \Funi^{r}_{\C}$ along the symmetric monoidal functor $\Upsilon : \Ind{\Perf{\M_{\C}}} \rightarrow \IndCoh{\M_{\C}}$, we obtain a continuous adjunction
\[
{\Funny}_{\C} : \IndCoh{\M_{\C}} \otimes \C \longleftrightarrow \IndCoh{\M_{\C}}:{\Funny}^{r}_{\C}
\] 
in which the left adjoint $\Funny_{\C}$ is corepresentable by $\Upsilon(\Euni_{\C}) \in \IndCoh{\M_{\C}} \otimes \C$. Applying the induced map of endofunctors  $\tilde{\alpha}: \Inv{\C_{\M_{\C}}}[n] \rightarrow \Id{\C_{\M_{\C}}}$ to the object  ${\Funny}^{r}_{\C}(\omega_{\M_{\C}})$ followed by applying the functor $\widetilde{\Funi_{\C}}=\Homun{\M_{\C}}{\Upsilon\Euni_{\C}}{-}$, we obtain a map
\[
\Endun{\M_{\C}}{\Upsilon \Euni_{\C}}[n] \stackrel{\tilde{\alpha}}\rightarrow \Endun{\M_{\C}}{\Upsilon \Euni_{\C}}^{\vee}.
\]
Here we have used the isomorphisms $\Endun{\M_{\C}}{\Upsilon \Euni_{\C}} \simeq \Funny_{\C}\Inv{\C_{\M_{\C}}}\Funny^{r}_{\C}(\omega_{\M_{\C}})$ and $\Endun{\M_{\C}}{\Upsilon \Euni_{\C}}^{\vee} \simeq \Funny_{\C} \Funny^{r}_{\C}(\omega_{\M_{\C}})$ induced by \eqref{endisos}.
Pre-composing with the isomorphism \eqref{liemap} and the trace map of Corollary \ref{invserre}, we obtain a map
\begin{equation}\label{oneform}
\alpha_1 : \Tang{\M_{\C}}[-1+n]  \simeq \Endun{\M_{\C}}{\Upsilon \Euni_{\C}}[n] \stackrel{\tilde{\alpha}}\rightarrow {\Endun{\M_{\C}}{\Upsilon \Euni_{\C}}}^{\vee} \stackrel{\tr}\rightarrow \omega_{\M_{\C}}.
\end{equation}

\begin{prop}\label{pforms}
Let $\C$ be a smooth dg category. Given a Hochschild chain $k[n] \rightarrow \HH{\C}$ corresponding to a map of endofunctors $\alpha : \Inv{\C}[n] \rightarrow \Id{\C}$, the corresponding $1$-form of degree $1-n$ on $\M_{\C}$ is (dual to) the map $\alpha_{1}$ from \eqref{oneform}, while the corresponding $p$-form $\kappa_{p}(\alpha)$ of degree $p-n$ is (dual to) the composition
\[
\sym{p}{\Tang{\M_{\C}}[-1]}[n] \rightarrow {\Tang{\M_{\C}}[-1]}^{\otimes p}[n] \stackrel{\circ}\rightarrow \Tang{\M_{\C}}[-1][n] \stackrel{\tr}\rightarrow \omega_{\M_{\C}},
\]
where the map $\sym{p}{\Tang{\M_{\C}}[-1]} \rightarrow {\Tang{\M_{\C}}[-1]}^{\otimes p}$ is symmetrisation, the map ${\Tang{\M_{\C}}[-1]}^{\otimes p} \stackrel{\circ}\rightarrow \Tang{\M_{\C}}[-1]$ is the $p$-fold multiplication in the associative algebra structure on $\Tang{\M_{\C}}[-1]$, and the map $\Tang{\M_{\C}}[-1][n] \stackrel{\tr}\rightarrow \omega_{\M_{\C}}$
is induced by the trace map of Corollary \ref{invserre}.
\end{prop}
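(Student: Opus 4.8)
The plan is to check the asserted description of $\kappa_{p}(\alpha)$ after restriction to each affine $x : U \rightarrow \M_{\C}$, which suffices since the spaces of (closed) $p$-forms and the map $\kappa_{p}$ are all defined as limits over $(\Aff/\M_{\C})^{\rm op}$, and since the algebra structure of Proposition \ref{tangmod}, the trace of Corollary \ref{invserre}, and the map $\alpha_{1}$ of \eqref{oneform} are manifestly functorial. Over such a $U$ the point $x$ classifies a continuous adjunction $f : \C \leftrightarrow \QCoh{U}$ with smooth source and rigid target, so that by Corollary \ref{corep} the induced $\QCoh{U}$-linear functor $F$ is corepresented by $E = F^{l}(\O_{U}) \in \C_{U}$. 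The first step is to apply Lemma \ref{hochrig}: the image of the Hochschild chain adjoint to $\alpha$ under $\HH{\C} \rightarrow \HH{\QCoh{U}} \simeq \Gamma(LU,\O_{LU})$, the last isomorphism being Theorem \ref{circacts}, is computed by applying $\Hom{\QCoh{U}}{\O_{U}}{-}$ to the composition \eqref{localhochmap}. Since the monoidal product of $\QCoh{U}$ is $\Delta^{*}$ with right adjoint $\Delta_{*}$, this identifies the local image with a map $\O_{U}[n] \rightarrow \Endun{U}{E}[n] \stackrel{\tilde\alpha}\rightarrow \Endun{U}{E}^{\vee} \rightarrow \Delta^{*}\Delta_{*}\O_{U}$, where the last arrow comes from the counit and multiplication of $\QCoh{U}$.

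The second step treats the last arrow $\Endun{U}{E}^{\vee} \rightarrow \Delta^{*}\Delta_{*}\O_{U}$ weight by weight. By Lemma \ref{deltadual} it is Grothendieck--Serre dual to the algebra map $\Delta^{!}\Delta_{*}\omega_{U} \rightarrow \Upsilon\Endun{U}{E}$ of \eqref{algact}. Using the isomorphism of cocommutative coalgebras \eqref{pbw}, $\Sym{U}{\Tang{U}[-1]} \simeq \Delta^{!}\Delta_{*}\omega_{U}$, this is an algebra map whose restriction to the weight-one summand $\Tang{U}[-1]$ is the local form of the isomorphism \eqref{liemap} of Proposition \ref{tangmod}. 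For $p = 1$ this already identifies the weight-one part of the local image with $\Tang{U}[-1][n] \simeq \Endun{U}{E}[n] \stackrel{\tilde\alpha}\rightarrow \Endun{U}{E}^{\vee} \stackrel{\tr}\rightarrow \omega_{U}$, that is, with the local restriction of $\alpha_{1}$; dualising via \eqref{dualforms} yields the assertion about the $1$-form.

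For general $p$ the key point is that the map $\Sym{U}{\Tang{U}[-1]} \rightarrow \Upsilon\Endun{U}{E}$ is a map of \emph{algebras}, the source being the distribution algebra of the formal loop group, which by the theory of formal groups is the enveloping algebra of the Lie algebra $\Tang{U}[-1]$. Under the Poincar\'e--Birkhoff--Witt identification \eqref{pbw}, the weight-$p$ inclusion $\sym{p}{\Tang{U}[-1]} \hookrightarrow \Delta^{!}\Delta_{*}\omega_{U}$ is symmetrisation, and composing it with the algebra map gives symmetrisation into $\Tang{U}[-1]^{\otimes p}$ followed by the $p$-fold product in $\Endun{U}{E}$. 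Dualising this weight-$p$ statement with Lemma \ref{deltadual}, and projecting onto the weight-$p$ summand $\Wedge^{p}\Cotang{U}[p]$ of $\Delta^{*}\Delta_{*}\O_{U}$, shows that the weight-$p$ component of the local image is dual to
\[
\sym{p}{\Tang{U}[-1]}[n] \longrightarrow \Tang{U}[-1]^{\otimes p}[n] \stackrel{\circ}\longrightarrow \Tang{U}[-1][n] \stackrel{\tr}\longrightarrow \omega_{U},
\]
whose last arrow is the same local $\alpha_{1}$ as in the case $p = 1$. Assembling these identifications over $(\Aff/\M_{\C})^{\rm op}$, globalising $\Tang{\M_{\C}}[-1] \simeq \Endun{\M_{\C}}{\Upsilon\Euni_{\C}}$ via Proposition \ref{tangmod} and extracting $\kappa_{p}$ through the weight-graded projection of Lemma \ref{pthcomp}, gives the proposition.

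The hard part will be the compatibility of the two structures carried by $\Delta^{!}\Delta_{*}\omega_{U}$: the cocommutative coalgebra structure of \eqref{pbw}, which produces the symmetric power $\sym{p}{-}$ and the symmetrisation map in the statement, and the associative algebra structure of \eqref{algact}, which produces the $p$-fold multiplication. Making the Poincar\'e--Birkhoff--Witt comparison precise in this setting, and verifying that the Grothendieck--Serre duality of Lemma \ref{deltadual} carries the weight-$p$ part of the algebra map to the dual of the $p$-fold product $\Tang{U}[-1]^{\otimes p} \stackrel{\circ}\rightarrow \Tang{U}[-1]$, is where the real content lies; keeping track of the $S^{1}$-weight grading through $\Upsilon$ and the duality is the principal source of potential shift and sign errors.
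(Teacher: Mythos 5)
Your proposal is correct and follows essentially the same route as the paper's own (much terser) proof: reduce to affine charts $U \rightarrow \M_{\C}$, compute the local Hochschild map via Lemma \ref{hochrig}, dualise via Lemma \ref{deltadual}, and identify the weight-graded pieces through the isomorphism \eqref{pbw}. The compatibility of the coalgebra and algebra structures on $\Delta^{!}\Delta_{*}\omega_{U}$ that you flag as the hard part is indeed the real content, and the paper likewise leaves it implicit in the appeal to \eqref{pbw}.
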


\begin{proof} The maps are defined globally, so to check that the composition is dual to that giving the $p$-form 
$\kappa_{p}(\alpha)$, it is enough to check this by restricting along each map $U \rightarrow \M_{\C}$ from an affine $U$ of finite type. For such a map, we use Lemma \ref{hochrig} on Hochschild maps with smooth source and rigid target. Taking the Grothendieck-Serre dual of this map as in Lemma \ref{deltadual} and using the isomorphism \ref{pbw} completes the identification of the $p$-form $\kappa_{p}(\alpha)$.
\end{proof}

\begin{rem}
In \cite{ptvv}, it is shown that if $X$ is locally an Artin stack, then $\dfor{p}{X}{n} \simeq |\Gamma(X,\Wedge^{p}\Cotang{X}[n])|$, so the above notion of the space of forms is at least reasonable in this case. Since the moduli space $\M_{C}$ is locally Artin when $C$ is of finite type, this will suffice for our purposes. For a general laft-def prestack, it is perhaps more natural to work directly with the Hodge filtration on de Rham cohomology. 
\end{rem}

\subsection{Symplectic and Lagrangian structures on the moduli of objects}

Recall from \cite{cy1} that a {\bf Calabi-Yau structure of dimension $d$} on a smooth dg category $\C$ is an $S^{1}$-equivariant map $\theta: k[d] \rightarrow \HH{\C}$ (equivalently, a map $k[d] \rightarrow \NC{\C}=\HH{\C}^{S^{1}}$) such that the corresponding map of endofunctors $\Inv{\C}[d] \rightarrow \Id{\C}$ is an isomorphism. More generally, given a continuous adjunction  $f: \C \leftrightarrow \D : f^{r}$ between smooth dg categories, a {\bf relative Calabi-Yau structure of dimension $d$} on the functor $f$ is a map $\eta: k[d] \rightarrow {\rm fib}(\NC{\C} \rightarrow \NC{\D})$ such that in the induced diagram
\begin{equation}\label{nondeg}
\xymatrix{
\Inv{\D}[d] \ar[r]\ar[d] & f \Inv{\C}[d] f^{r} \ar[r] \ar[d] & {\rm cof} \ar[d]\\
{\rm fib} \ar[r] & ff^{r} \ar[r] & \Id{\D}
}
\end{equation}
all vertical arrows are isomorphisms.\footnote{In \cite{cy1}, this was called a `left relative Calabi-Yau structure'. Since `right Calabi-Yau structures' do not appear explicitly in this paper, we drop `left'.} Here let us note that the map $\Inv{\D}[d] \rightarrow f \Inv{\C}[d] f^{r}$ 
is that given by \ref{shriekunit}.

In particular, a relative Calabi-Yau structure on $0 \rightarrow \D$ of dimension $d$ is just a Calabi-Yau structure of dimension $d+1$ on $\D$. We are especially interested in relative Calabi-Yau structures giving an absolute Calabi-Yau structure on $\C$. 

We have the following easy lemma, which will be used in the proof of the main theorem below.

\begin{lem}\label{leftimpliesright}
Let $\C$ and $\D$ be compactly generated smooth dg categories, $f : \C \rightarrow \D : f^{r}$ a continuous adjunction equipped with a relative Calabi-Yau structure of dimension $d$, and $x \in \D$ a right proper object so that $F_{\D}=\Hom{\D}{-}{x}^{*} : \D \rightarrow \Vect$ has continuous right adjoint $F^{r}_{\D}$. Then we have a commutative diagram 
\begin{equation*}
\xymatrix{
F_{\D}\Inv{\D}F^{r}_{D}[d] \ar[r]\ar@{}[d]|*=0[@]{\simeq}  & F_{\D}f \Inv{\C} f^{r}F^{r}_{\D}[d] \ar[r] \ar@{}[d]|*=0[@]{\simeq}  & F_{D}{\rm cof}F^{r}_{D} \ar@{}[d]|*=0[@]{\simeq} \\
F_{\D}{\rm fib}F^{r}_{\D} \ar[r] & F_{\D}ff^{r}F^{r}_{\D} \ar[r] & F_{\D}F^{r}_{\D}}
\end{equation*}
of endofunctors of $\Vect$ induced by applying $F^{r}_{\D}$ on the right and $F_{\D}$ the left of the diagram \ref{nondeg}. When evaluated on $k$, we obtain a commutative diagram
\begin{equation}\label{nondegfibres}
\xymatrix{
\End{\D}{x}[d] \ar[r]\ar@{}[d]|*=0[@]{\simeq}  & \End{\C}{f^{r}(x)}[d] \ar[r] \ar@{}[d]|*=0[@]{\simeq}  & \widetilde{\rm cof} \ar@{}[d]|*=0[@]{\simeq} \\
\widetilde{\rm fib} \ar[r] & \End{\C}{f^{r}(x)}^{*} \ar[r] & \End{\D}{x}^{*}}
\end{equation}
in which the upper left horizontal arrow is induced by applying the functor $f^{r}: \D \rightarrow \C$ and the lower right horizontal arrow is dual to that induced by $f^{r}$.
\end{lem}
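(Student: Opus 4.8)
The plan is to apply the exact endofunctor-level operation $G \mapsto F_\D\, G\, F^r_\D$ to the non-degeneracy diagram \ref{nondeg} and then evaluate on $k$, and to identify the resulting six objects together with the two highlighted arrows. Since $F_\D$ is continuous and $F^r_\D$ is a right adjoint between stable categories, both are exact; hence $G \mapsto F_\D G F^r_\D$ and evaluation at $k \in \Vect$ carry cofibre sequences to cofibre sequences, fibre sequences to fibre sequences, and isomorphisms to isomorphisms. Applying this to \ref{nondeg} at once yields a diagram of the asserted shape, with cofibre top row, fibre bottom row, and invertible verticals; what remains is to name the terms and the two maps.

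For the terms I would use the defining property of $x$ together with Serre duality. By the Yoneda lemma $F^r_\D(k) \simeq x$, and since $F_\D \circ f \simeq \Hom\C{-}{f^r(x)}^*$ by the adjunction $f \dashv f^r$, the object $f^r(x)$ is again right proper with $f^r F^r_\D(k) \simeq f^r(x)$. Corollary \ref{invserre} then provides the second descriptions $F_\D \simeq \Hom\D{\Inv\D(x)}{-}$ and $F_\D f \simeq \Hom\C{\Inv\C(f^r(x))}{-}$. Comparing the two descriptions of each functor on the objects $x$ and $\Inv\D(x)$ (respectively $f^r(x)$ and $\Inv\C(f^r(x))$), and using that $\End\D{x}$ and $\End\C{f^r(x)}$ are perfect over $k$ hence reflexive, I obtain $F_\D F^r_\D(k)\simeq \End\D{x}^*$, $F_\D f f^r F^r_\D(k) \simeq \End\C{f^r(x)}^*$, $F_\D \Inv\D F^r_\D(k)\simeq \End\D{x}$ and $F_\D f \Inv\C f^r F^r_\D(k)\simeq \End\C{f^r(x)}$; the shifts $[d]$ then produce the top row as displayed, and $\widetilde{\rm fib}$, $\widetilde{\rm cof}$ are by definition the (co)fibres. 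I emphasise that this uses only Serre duality for the proper objects $x$ and $f^r(x)$, not any absolute Calabi--Yau hypothesis on $\C$ or $\D$; the invertibility of the verticals is precisely the non-degeneracy of the relative structure.

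The lower-right arrow is the image $F_\D(\varepsilon_x)$ of the counit $\varepsilon : ff^r \to \Id\D$. Its $k$-linear predual is the map $\End\D{x} \to \Hom\D{ff^r(x)}{x}$, $\phi \mapsto \phi\circ \varepsilon_x$; transporting the target through the adjunction isomorphism $\Hom\D{ff^r(x)}{x}\simeq \End\C{f^r(x)}$ and invoking the triangle identity $f^r(\varepsilon_x)\circ \eta_{f^r(x)} = \Id{f^r(x)}$ collapses this to $\phi \mapsto f^r(\phi)$. Hence the lower-right arrow is the $k$-linear dual of $f^r : \End\D{x} \to \End\C{f^r(x)}$, as required.

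The upper-left arrow is the crux and the main obstacle. It is the image of the shriek unit $\tilde\eta : \Inv\D \to f\Inv\C f^r$ of \ref{shriekunit}, and must be identified with $f^r$ itself; unlike the counit, $\tilde\eta$ is not the unit of a manifest adjunction, so the direct triangle-identity computation is unavailable. My plan is to exploit the self-duality built into \ref{nondeg}: by construction in \cite{cy1}, the top cofibre sequence is the inverse-dualising (Serre) dual of the bottom fibre sequence, the vertical isomorphisms being the resulting perfect pairings. Applying $F_\D(-)F^r_\D$ and evaluating at $k$ is exactly the operation that realises this pairing at the point $x$, so the top and bottom rows of \ref{nondegfibres} become $k$-linearly dual to one another (up to the shift by $[d]$); under this row-duality the upper-left arrow is the transpose of the lower-right arrow, and since the latter is the dual of $f^r$, the former is $(f^r)^{\vee\vee}\simeq f^r$ by reflexivity. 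The safe but laborious alternative, which I would fall back on if the self-duality is not citable in the precise form needed, is to unwind $\tilde\eta$ from the evaluation/coevaluation data of Proposition \ref{hochfact} and to check its fibre at $x$ directly by a computation dual to the triangle-identity argument above; this is the step demanding the most care.
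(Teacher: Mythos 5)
Your proposal is correct and follows essentially the same route as the paper: everything reduces to the observation that $F_{\C}:=F_{\D}f\simeq \Hom{\C}{-}{f^{r}(x)}^{*}$ via the adjunction $f\dashv f^{r}$, after which the six objects are read off from Corollary \ref{corep} (equivalently Corollary \ref{invserre} with $\A=\Vect$), exactly as in the paper's one-line proof. Your extra care with the two highlighted arrows goes beyond the paper's terse ``follows easily from Corollary \ref{corep}''; the only assertion you leave unproved is the row-duality claim used to identify the upper-left arrow with $f^{r}$, and your stated fallback --- unwinding the shriek unit \ref{shriekunit} from the evaluation/coevaluation data of Proposition \ref{hochfact} --- is indeed the correct way to close that step.
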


\begin{proof}
If we define $F_{\C}:=F_{\D}f$, then for any compact object $y \in \C$, $F_{\C}(y)=\Hom{\D}{f(y)}{x}^{*} \simeq
\Hom{\C}{y}{f^{r}(x)}^{*}$, naturally in $y$, hence $F_{\C} \simeq \Hom{\C}{-}{f^{r}(x)}^{*}$. The other assertions then follow easily from Corollary \ref{corep}.
\end{proof}

We are now ready to prove the main theorem of this paper.

\begin{thm}\label{mainthm}
\begin{enumerate}
\item Given a smooth dg category $\C$ with Calabi-Yau structure $\theta: k[d] \rightarrow \NC{\C}$ of dimension $d$, 
the corresponding closed $2$-form  $\tilde{\kappa}_{2}(\theta) \in \cldfor{2}{\M_{\C}}{2-d}$ is non-degenerate. In words, a Calabi-Yau structure of dimension $d$ on a smooth dg category $\C$ induces on the moduli space of objects $\M_{\C}$ a symplectic form of degree $2-d$.

\item Given a continuous adjunction $f: \C \longleftrightarrow \D : f^{r}$ between smooth dg categories equipped with a relative Calabi-Yau structure  $\eta: k[d] \rightarrow {\rm fib}(\NC{\C} \rightarrow \NC{\D})$ of dimension $d$ that agrees with the absolute Calabi-Yau structure $\theta$, there is an induced Lagrangian structure on the map of moduli spaces
\[
\M_{\D} \rightarrow \M_{\C}.
\]
\end{enumerate}
\end{thm}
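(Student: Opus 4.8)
The plan is to handle the two parts separately, in each case reducing the required non-degeneracy to the non-degeneracy already built into the (relative) Calabi-Yau datum, via the explicit description of forms in Proposition \ref{pforms}.

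For part (1) the closed $2$-form is already produced as $\tilde\kappa_2(\theta)$, so the only content is that its contraction map $\Tang{\M_{\C}} \to \Cotang{\M_{\C}}[2-d]$ is an equivalence. First I would apply Proposition \ref{pforms} with $p=2$, $n=d$ to identify the underlying pairing of $\tilde\kappa_2(\theta)$, under the isomorphism $\Tang{\M_{\C}}[-1] \simeq \Endun{\M_{\C}}{\Upsilon\Euni_{\C}}$ of Proposition \ref{tangmod}, with the global Serre pairing \eqref{globalserre}, that is, the composite of the multiplication on $\Endun{\M_{\C}}{\Euni_{\C}}$ with the trace of Corollary \ref{invserre}. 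Contracting one slot, and using $\Endun{\M_{\C}}{\Euni_{\C}}^{\vee} \simeq \Cotang{\M_{\C}}[1]$, the contraction map is identified with the map
\[
\Endun{\M_{\C}}{\Upsilon\Euni_{\C}}[d] \stackrel{\tilde\alpha}{\longrightarrow} \Endun{\M_{\C}}{\Upsilon\Euni_{\C}}^{\vee}
\]
occurring in the construction of \eqref{oneform}, where $\tilde\alpha$ is induced by the underlying endofunctor map $\alpha:\Inv{\C}[d]\to\Id{\C}$ of $\theta$. The defining non-degeneracy of the Calabi-Yau structure is precisely that $\alpha$ is an isomorphism; its base change to $\C_{\M_{\C}}$ is then an isomorphism, and applying $\widetilde{\Funi_{\C}}$ to $\tilde\alpha$ evaluated on $\Funny^{r}_{\C}(\omega_{\M_{\C}})$, using the identifications \eqref{endisos}, shows $\tilde\alpha$ is an isomorphism. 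Hence the contraction is an equivalence and the form is symplectic.

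For part (2) I would first build the isotropic structure and then verify its non-degeneracy. The isotropic structure comes from functoriality: the construction $\tilde\kappa_2$ of Proposition \ref{negtoclos} is natural in the dg category, so $f$ and the induced map $\varphi:\M_{\D}\to\M_{\C}$ assemble into a commutative square relating $\NC{\C}\to\cldfor{2}{\M_{\C}}{2-d}$ to $\NC{\D}\to\cldfor{2}{\M_{\D}}{2-d}$ through $\varphi^*$. Consequently $\varphi^*\tilde\kappa_2(\theta)$ is the closed $2$-form attached to the image of $\theta$ in $\NC{\D}$. As $\eta:k[d]\to{\rm fib}(\NC{\C}\to\NC{\D})$ has image $\theta$ in $\NC{\C}$ (the hypothesis that $\eta$ agrees with $\theta$) and its composite to $\NC{\D}$ is canonically null, $\eta$ supplies a null-homotopy of that image; transporting it through the functorial $\tilde\kappa_2$ yields a null-homotopy of $\varphi^*\tilde\kappa_2(\theta)$, i.e. the desired isotropic structure.

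It then remains to prove non-degeneracy, namely that the induced comparison $\Tang{\M_{\D}}\to\Cotang{\M_{\D}/\M_{\C}}[1-d]$ is an equivalence. Since $\M_{\D}$ is laft-def, I would check this on fibres at finite-type affine points $x:U\to\M_{\D}$. By Lemma \ref{tangmap} the fibre of the tangent map $\Tang{\M_{\D}}[-1]\to\varphi^!\Tang{\M_{\C}}[-1]$ is $\End{\D}{x}\to\End{\C}{f^{r}(x)}$ and the fibre of the cotangent map is its dual, so the fibre of $\Cotang{\M_{\D}/\M_{\C}}$ is ${\rm fib}(\End{\C}{f^{r}(x)}^{*}\to\End{\D}{x}^{*})$. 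The commutative diagram \eqref{nondegfibres} of Lemma \ref{leftimpliesright} is exactly the fibrewise incarnation of the comparison: its leftmost vertical arrow is the fibre of $\Tang{\M_{\D}}\to\Cotang{\M_{\D}/\M_{\C}}[1-d]$, and it is an isomorphism precisely by the non-degeneracy \eqref{nondeg} of the relative Calabi-Yau structure (its middle vertical arrow being the fibrewise absolute non-degeneracy from part (1)). This gives the fibrewise, hence global, equivalence.

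The \textbf{main obstacle} is the final matching step: identifying the comparison map that the (PTVV-style) isotropic structure induces on relative tangent and cotangent complexes with the concrete map coming from diagram \eqref{nondeg}, and confirming that the null-homotopy produced from $\eta$ is the one realising the vertical isomorphisms of Lemma \ref{leftimpliesright}. This demands a relative refinement of Proposition \ref{pforms}, computing not merely the pulled-back form but its chosen trivialisation, via the functoriality of Hochschild maps (Propositions \ref{hochfact} and \ref{hochrig}) together with Lemma \ref{leftimpliesright}, and checking compatibility of all the tangent/cotangent fibre identifications with those appearing in \eqref{nondegfibres}.
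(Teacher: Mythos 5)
Your proposal is correct and follows essentially the same route as the paper: part (1) via Proposition \ref{pforms} identifying the underlying pairing with the Serre pairing of Corollary \ref{invserre} (non-degenerate because $\alpha:\Inv{\C}[d]\to\Id{\C}$ is an isomorphism), and part (2) by producing the isotropic structure from naturality of $\tilde{\kappa}_2$ and checking non-degeneracy fibrewise at $k$-points using Lemma \ref{tangmap} and Lemma \ref{leftimpliesright}. The ``main obstacle'' you flag --- matching the PTVV comparison map with the concrete diagram \eqref{nondeg} --- is precisely the step the paper also treats tersely, asserting the compatibility via the commutative diagram of functors induced by Lemma \ref{tangmap}.
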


\begin{proof}
The proof of 1) is immediate from Lemma \ref{pforms}: the pairing $\Tang{\M_{\C}}[-1] \otimes \Tang{\M_{\C}}[-1] \rightarrow \omega_{\M_{\C}}[-d]$ given by the underlying $2$-form is exactly the Serre pairing of Corollary \ref{invserre}, after using the isomorphism $\Endun{\M_{\C}}{\Upsilon \Euni} \simeq \Tang{\M_{\C}}[-1]$. 

For the proof of 2), we have to describe the induced isotropic structure. For this, we use naturality of the map $|\NC{-}[d]| \rightarrow \cldfor{2}{\M_{-}}{2-d}$ to obtain a diagram of fibre sequences
\[\xymatrix{
{\rm fib}(|\NC{f}|) \ar[r]\ar[d] & |\NC{\C}[-d]| \ar[r]^{|\NC{f}|}\ar[d] & |\NC{\D}[-d]| \ar[d] \\
{\rm fib}({\varphi^{*}}_{\rm cl}) \ar[r]\ar[d] & \cldfor{2}{\M_{\C}}{2-d}  \ar[r]^{\varphi^{*}_{\rm cl}}\ar[d] & \cldfor{2}{\M_{\D}}{2-d} \ar[d] \\
{\rm fib}(\varphi^{*}) \ar[r] & \dfor{2}{\M_{\C}}{2-d}  \ar[r]^{\varphi^{*}} & \dfor{2}{\M_{\D}}{2-d} 
}
\]
The relative Calabi-Yau structure $k \rightarrow {\rm fib}(|\NC{\C}[-d]| \rightarrow |\NC{\D}[-d]|)$ determines
a point in ${\rm fib}(|\NC{f}|)$, which maps under the upper left vertical arrow to a point in ${\rm fib}({\varphi^{*}}_{\rm cl})$, determining an isotropic structure. 

To prove non-degeneracy of the isotropic structure, note that the maps of functors from Lemma \ref{tangmap}, together with the relative Calabi-Yau structure, induce a commutative diagram of functors
\[
\xymatrix{\Funi_{\D}\Inv{\D}\Funi^{r}_{\D}[d] \ar[r] \ar[d] & \varphi^{*}\Funi_{\C}\Inv{\C}\Funi^{r}_{\C}[d] \ar[r] \ar@{}[d]|*=0[@]{\simeq} & {\rm fib} \ar[d] \\
{\rm cof} \ar[r] & \varphi^{*}\Funi_{\C}\Funi^{r}_{\C} \ar[r] & \Funi_{\D}\Funi^{r}_{\D}}.
\]
Evaluating this diagram on $\O_{\M_{\D}}$ and applying $\Upsilon$ gives a commutative diagram
\[
\xymatrix{\Tang{\M_{\D}}[-1+d] \ar[r]\ar[d] & \phi^{!}\Tang{\M_{\C}}[-1+d] \ar[r]  \ar@{}[d]|*=0[@]{\simeq}  & \Tang{\M_{\D}/\M_{\C}}[-1+d] \ar[d] \\
\Upsilon \Cotang{\M_{\D}/\M_{\C}}[1] \ar[r] & \varphi^{!}\Upsilon \Cotang{\M_{\C}}[1] \ar[r] & \Upsilon \Cotang{\M_{\D}}[1]}
\]
in which the upper left horizontal arrow is the shifted tangent map and the lower right horizontal arrow is the shifted cotangent map. 

It remains to see that the outer two vertical arrows in the above diagram are isomorphisms. Since $\M_{\D}$ is laft, it is enough to check isomorphisms on fibres over $k$-points $x \in \M_{\D}$, which by definition of the moduli space correspond to right proper objects $x \in \D$ giving dg functors $F_{\D}=\Hom{\D}{-}{x}^{*}: \D \rightarrow \Vect$ with continuous right adjoint. By Lemma \ref{tangmap}, the fibre of the upper left horizontal arrow is the map $\End{\D}{x}[d] \rightarrow \End{\C}{f^{r}(x)}[d]$ induced by the functor $f^{r} : \D \rightarrow \C$ and the fibre of the lower right horizontal map is dual to that, up to a shift. That the fibres of the outer two vertical maps are isomorphisms now follows from Lemma \ref{leftimpliesright}.
\end{proof}

\section{Applications and examples}

In this section, we apply Theorem \ref{mainthm} to a number of examples of relative Calabi-Yau structures on functors $\C \rightarrow \D$ to produce Lagrangian structures on the corresponding maps of moduli spaces $\M_{\D} \rightarrow \M_{\C}$. The example of local systems on manifolds with boundary and some version of the example of ind-coherent sheaves on Gorenstein schemes with anti-canonical divisors are also treated by Calaque \cite{calaque}, using different methods. The example coming from $A_{n}$-quivers was known in some form to experts. See for example \cite{toenreview}, 5.3.

\subsection{Oriented manifolds and Calabi-Yau schemes}

Given a closed oriented manifold $M$ of dimension $d$, Cohen-Gantra \cite{cohenganatra} constructed an absolute Calabi-Yau structure on the dg category $\Loc{M}$ of local systems 
on $M$. More generally, given an oriented manifold $N$ of dimension $d+1$ with boundary $\partial N=M$, Theorem 5.7 of \cite{cy1} gives a relative Calabi-Yau structure of dimension $d+1$ on the induction functor
\begin{equation}
\label{induct}
i_{!} : \Loc{\partial N} \rightarrow \Loc{N}.
\end{equation}
Applying Theorem \ref{mainthm} to this relative Calabi-Yau structure, we obtain the following.

\begin{cor}\label{locsys}
The relative Calabi-Yau structure on the functor \ref{induct} induces a Lagrangian structure on the corresponding map of moduli spaces 
\[
\M_{\Loc{N}} \rightarrow \M_{\Loc{\partial N}}.
\]
\end{cor}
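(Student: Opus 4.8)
The plan is to deduce this directly from the second part of Theorem~\ref{mainthm}. I set $\C = \Loc{\partial N}$ and $\D = \Loc{N}$, take $f = i_{!}$ to be the induction functor \ref{induct}, and note that its right adjoint $f^{r} = i^{*}$ is the restriction-to-the-boundary functor. Under the contravariant assignment $\C \mapsto \M_{\C}$, the functor $i_{!}$ induces precisely the map $\M_{\Loc{N}} \to \M_{\Loc{\partial N}}$ of the statement; indeed, by Lemma~\ref{tangmap} and the identification \ref{kpoint}, on points it sends a right proper object $E \in \Loc{N}$ to $f^{r}(E) = i^{*}E = E|_{\partial N}$, so the map is genuine boundary restriction of local systems. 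The input is the relative Calabi-Yau structure of dimension $d+1$ on $i_{!}$ furnished by \cite{cy1}, Theorem~5.7, and the output of Theorem~\ref{mainthm}(2) is exactly the asserted Lagrangian structure.

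To invoke Theorem~\ref{mainthm}(2) I must check its hypotheses. First, both categories must be of finite type: since $\partial N$ and $N$ are compact, they are homotopy equivalent to finite CW complexes, so $\Loc{\partial N}$ and $\Loc{N}$ are finite type dg categories, in particular smooth, and their moduli spaces are the laft-def prestacks to which Theorem~\ref{mainthm} applies; this is the same smoothness already needed in \cite{cy1} to make sense of the relative structure. Second, Theorem~\ref{mainthm}(2) requires that the relative structure restrict, along the boundary map ${\rm fib}(\NC{\Loc{\partial N}} \to \NC{\Loc{N}}) \to \NC{\Loc{\partial N}}$, to an absolute Calabi-Yau structure $\theta$ on the source. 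I would take $\theta$ to be the Cohen-Ganatra absolute Calabi-Yau structure of dimension $d$ on $\Loc{\partial N}$ from \cite{cohenganatra}, induced by the orientation of $\partial N$ obtained by restricting the orientation of $N$; it is this $\theta$ whose symplectic form on $\M_{\Loc{\partial N}}$ is produced by Theorem~\ref{mainthm}(1) and with respect to which the isotropic structure is to be non-degenerate.

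The only substantive point, and the place I expect the real work to sit, is this last compatibility: that the boundary of the relative Calabi-Yau structure of \cite{cy1}, Theorem~5.7, on $\Loc{\partial N}$ coincides with the Cohen-Ganatra structure attached to the restricted orientation. I do not expect this to be a serious obstacle, since both structures are manufactured from the same fundamental-class data on $\partial N$ and the construction of \cite{cy1} is designed precisely so that the boundary of the relative structure is this absolute one; nonetheless it is the one non-formal ingredient and should be recorded explicitly. Granting it, everything else is automatic: the isotropic structure on $\M_{\Loc{N}} \to \M_{\Loc{\partial N}}$ and its non-degeneracy are produced verbatim by the proof of Theorem~\ref{mainthm}(2), which reduces non-degeneracy fibrewise to the non-degeneracy diagram \ref{nondeg} of the relative Calabi-Yau structure via Lemmas~\ref{leftimpliesright} and \ref{tangmap}.
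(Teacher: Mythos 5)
Your proposal is correct and follows exactly the paper's route: the paper's entire argument for Corollary~\ref{locsys} is to cite Theorem~5.7 of \cite{cy1} for the relative Calabi--Yau structure on $i_{!}$ and then apply Theorem~\ref{mainthm}(2). Your additional checks (finite type/smoothness of $\Loc{\partial N}$ and $\Loc{N}$, and the compatibility of the boundary of the relative structure with the Cohen--Ganatra absolute structure) are left implicit in the paper but are exactly the right points to record.
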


Similarly, given a finite type Gorenstein scheme $X$ of dimension $d$ together with a trivialisation $\theta : \O_{X} \simeq K_{X}$ of its canonical bundle, Proposition 5.12 of \cite{cy1} gives an absolute Calabi-Yau structure of dimension $d$ on $\IndCoh{X}$. Given a Gorenstein scheme $Y$ of dimension $d+1$ with an anticanonical section $s \in K_{Y}^{-1}$ having a zero-scheme $X$ of dimension $d$, there is an induced trivialisation $\theta : \O_{X} \simeq K_{X}$, and Theorem 5.13 of \cite{cy1} gives a relative Calabi-Yau structure of dimension $d+1$ on the pushforward functor
\[\label{push}
i_{*} : \IndCoh{X} \rightarrow \IndCoh{Y}.
\]
Applying Theorem \ref{mainthm} to this relative Calabi-Yau structure, we obtain the following.

\begin{cor}\label{antican}
The relative Calabi-Yau structure on the functor \ref{push} induces a Lagrangian structure on the corresponding map of moduli spaces 
\[
\M_{Y} \rightarrow \M_{X}
\]
\end{cor}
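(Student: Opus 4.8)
The plan is to obtain the statement as a direct instance of Theorem~\ref{mainthm}(2), applied to pushforward along the closed immersion $i : X \hra Y$. Concretely, I would put $\C = \IndCoh{X}$ and $\D = \IndCoh{Y}$ and take the continuous adjunction to be $i_{*} : \IndCoh{X} \leftrightarrow \IndCoh{Y} : i^{!}$. As $i$ is a closed, hence proper, immersion of finite type Gorenstein schemes, $i_{*}$ sends coherent sheaves to coherent sheaves and so preserves compact objects, while $i^{!}$ is its continuous right adjoint; thus $(i_{*},i^{!})$ is a continuous adjunction between compactly generated dg categories, and (as in Lemma~\ref{tangmap}) it induces the map of moduli spaces $\M_{\IndCoh{Y}} \to \M_{\IndCoh{X}}$, which is the map $\M_{Y} \to \M_{X}$ in the statement. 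The Calabi-Yau data is imported from \cite{cy1}: Proposition~5.12 attaches to the trivialisation $\O_{X} \simeq K_{X}$ an absolute Calabi-Yau structure $\theta$ of dimension $d$ on $\IndCoh{X}$, and Theorem~5.13 upgrades the anticanonical section $s$ to a relative Calabi-Yau structure on $i_{*}$ whose restriction along ${\rm fib}(\NC{\IndCoh{X}} \to \NC{\IndCoh{Y}}) \to \NC{\IndCoh{X}}$ is $\theta$.

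Before invoking the main theorem I would discharge its standing hypotheses. Smoothness and finite type of $\IndCoh{X}$ and $\IndCoh{Y}$ for finite type Gorenstein schemes is already implicit in \cite{cy1}---the existence of an absolute Calabi-Yau structure presupposes smoothness---and by \cite{tv} it guarantees that $\M_{\IndCoh{X}}$ and $\M_{\IndCoh{Y}}$ are laft-def with perfect cotangent complexes, so that Theorem~\ref{mainthm} is applicable. I would then spell out the identification $\M_{X} = \M_{\IndCoh{X}}$ underlying the phrase `perfect complexes with proper support': a $U$-point of $\M_{\IndCoh{X}}$ is an exact functor $\Coh{X} = \IndCoh{X}^{c} \to \Perf{U}$, which by Corollary~\ref{corep} is corepresented by a left proper object of $\IndCoh{X} \otimes \Perf{U}$, and left properness over $\Perf{U}$ is exactly the requirement that the corepresenting object be a perfect complex on $X$ with proper support. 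Granting these two identifications, Theorem~\ref{mainthm}(2) delivers the asserted Lagrangian structure.

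The main obstacle is bookkeeping rather than new geometry, and I expect it to concentrate in the compatibility clause of Theorem~\ref{mainthm}(2), that $\eta$ `agrees with' $\theta$. Here one must confirm that the source component of the relative class of \cite{cy1}, Theorem~5.13, is the absolute class of Proposition~5.12 on the nose, and not merely up to a shift or a twist by the relative dualising data cut out by $s$. Care is also needed with dimension conventions: the dimension label carried by the relative structure in \cite{cy1} is that of the filling $Y$ and is offset by one from the parameter $d$ of Theorem~\ref{mainthm}, which is the dimension of $X$ governing the degree $2-d$ of the symplectic form on $\M_{X}$; what must actually be checked is simply that the restriction of the relative structure to $\IndCoh{X}$ is the dimension-$d$ absolute structure. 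The only other point requiring attention is the support condition in the moduli identification above which, while routine given Corollary~\ref{corep}, is what the words `with proper support' in the statement ultimately rest upon.
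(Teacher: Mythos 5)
Your proposal is correct and follows the paper's own route exactly: the paper's entire argument is to invoke the relative Calabi--Yau structure of \cite{cy1}, Theorem 5.13, on the pushforward $i_{*} : \IndCoh{X} \to \IndCoh{Y}$ and apply Theorem \ref{mainthm}(2). The additional checks you flag (continuity of the adjunction $i_{*} \dashv i^{!}$ for the proper map $i$, smoothness, the identification of $\M_{\IndCoh{X}}$ with perfect complexes with proper support, and the compatibility of the relative class with the absolute one) are left implicit in the paper but are the right points to verify.
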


\subsection{Lagrangian correspondences and exact sequences}

One of the basic examples of a relative Calabi-Yau structure, treated in \cite{cy1}, Theorem 5.14, comes from the representation theory of quivers of type $A_{n}$. Specifically, there is a natural functor 
\[\label{quivrelcy}
\amalg_{i=1}^{n+1} \Vect \rightarrow \Mod{A_{n}}
\]
with a relative Calabi-Yau structure of dimension $1$. Denoting the moduli space of objects in $\Vect$ by $\M_{1}$ and the moduli space of objects in $\Mod{A_{n}}$ by $\M_{n}$, Theorem \ref{mainthm} endows the induced map
\[
\M_{n} \rightarrow \Pi_{i=1}^{n+1} \M_{1}
\]
with a Lagrangian structure.

Let us explain the case $n=2$ in more detail. For the quiver $A_{2}$, we have two simple modules $S_{1}$ and $S_{2}$, which we denote schematically by $k \rightarrow 0$ and $0 \rightarrow k$ respectively, and the extension $P$ of $S_{1}$ by $S_{2}$, denoted schematically as $k \rightarrow k$. 

The functor
\[
\amalg_{i=1}^{3} \Vect \rightarrow \Mod{A_{2}}.
\]
taking the first copy of $k$ to the simple module $S_{1}$, the second copy of $k$ to $P$, and the third copy of $k$ to the simple module $S_{2}$ carries an essentially unique relative Calabi-Yau structure. Indeed, there is an isomorphism of $S^{1}$-complexes $\HH{\amalg_{i=1}^{3} \Vect} \simeq k \oplus k \oplus k$ given by the classes of the three copies of $k$, and similarly an isomorphism $\HH{\Mod{A_{2}}} \simeq k \oplus k$
given by the classes of $S_{1}$ and $S_{2}$. With respect to these isomorphisms, the exact sequence
$\HH{\Mod{A_{2}},\amalg_{i=1}^{3} \Vect}[-1] \rightarrow \HH{\amalg_{i=1}^{3} \Vect} \rightarrow \HH{\Mod{A_{2}}}$ identifies with the exact sequence
\[
\xymatrix{
k \ar[r] \ar[r]^{\spmat{1 \\ -1 \\ 1}}  & k \oplus k \oplus k \ar[r]^{\spmat{1 & 1 & 0 \\ 0 & 1 & 1}} & k \oplus k
}.
\]
By examining the action of the relevant functors on the simple modules of $A_{2}$, it is not hard to check that the identification $k \simeq \HH{\Mod{A_{2}},\amalg_{i=1}^{3} \Vect}[-1]$ satisfies the non-degeneracy necessary for a relative Calabi-Yau structure. 

Now consider the induced map $\M_{2} \rightarrow \M_{1} \times \M_{1} \times \M_{1}$. A $k$-point in $\M_{2}$ is a continuous functor $\Mod{A_{2}} \rightarrow \Vect$ with continuous right adjoint. The image of the exact sequence $S_{2} \rightarrow P \rightarrow S_{1}$ under this functor essentially determines the functor, and so we can consider $\M_{2}$ as the moduli space of exact sequence, with the first and last factor of $\M_{2} \rightarrow \M_{1} \times \M_{1} \times \M_{1}$ picking out the beginning and end of the sequence and the middle factor giving the middle term of the sequence. 

Note that the Lagrangian structure on the map $\M_{2} \rightarrow \M_{1} \times \M_{1} \times \M_{1}$ is with respect to the degree $2$ symplectic form $(\omega,-\omega,\omega)$ on the target, where $\omega$ is the standard degree $2$ symplectic form on $\M_{1}$.

We consider now a generalisation of the above construction to the moduli space of $A_{n}$-representations in a Calabi-Yau category $\C$ of dimension $d$. 

\begin{lem}\label{kunneth}
Given dualisable dg categories $\C$ and $\D$, there is a K\"unneth isomorphism $\HH{\C \otimes \D} \simeq \HH{\C} \otimes \HH{\D}$ of $S^{1}$-complexes. When $\C$ and $\D$ are smooth, the underlying $k$-linear K\"unneth isomorphism factors as $\HH{\C \otimes \D} \simeq \Hom{\endo{\C \otimes \D}}{\Inv{\C \otimes \D}}{\Id{\C \otimes \D}} \simeq \Hom{\endo{\C}}{\Inv{\C}}{\Id{\C}} \otimes \Hom{\endo{\D}}{\Inv{\D}}{\Id{\D}} \simeq \HH{\C} \otimes \HH{\D}$.
\end{lem}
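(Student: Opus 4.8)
The plan is to deduce both claims from the symmetric monoidality of the trace functor constructed in Section~\ref{subtraces}, together with the fact that all the duality data is multiplicative under $\otimes$. Recall that $\HH{\C} = \Tr{\Id{\C}}$, where the identity endofunctor is regarded as the canonical, naturally $S^{1}$-fixed point $(\C,\Id{\C}) \in \bAut{\DGCattwo}$, and that by Proposition~\ref{circleeq} the trace functor $\Trr{\DGCattwo}{-} : \bAut{\DGCattwo} \rightarrow \Omega\DGCattwo$ admits a symmetric monoidal, $S^{1}$-equivariant refinement.

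For the first assertion I would first note that the symmetric monoidal structure on $\bAut{\DGCattwo}$ is computed pointwise, so that the product of the two canonical fixed points is
\[
(\C,\Id{\C}) \otimes (\D,\Id{\D}) \simeq (\C \otimes \D,\ \Id{\C} \otimes \Id{\D}) \simeq (\C \otimes \D,\ \Id{\C \otimes \D}),
\]
again an $S^{1}$-fixed point. Applying the symmetric monoidal $S^{1}$-equivariant trace then yields the isomorphism $\HH{\C \otimes \D} \simeq \HH{\C} \otimes \HH{\D}$ of $S^{1}$-complexes, with no smoothness hypothesis required.

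For the second assertion I would unwind the identification \eqref{hochsmooth}, tracking how each step interacts with $\otimes$. The inputs are the multiplicativity of the duality data: since dualisable objects are closed under tensor product, $(\C \otimes \D)^{\vee} \simeq \C^{\vee} \otimes \D^{\vee}$ with $\ev{\C \otimes \D} \simeq \ev{\C} \otimes \ev{\D}$ and $\co{\C \otimes \D} \simeq \co{\C} \otimes \co{\D}$ after reordering tensor factors. Since the monoidal product of $\DGCattwo$ is a $2$-functor it preserves adjunctions, so $(\evL{\C} \otimes \evL{\D},\ \ev{\C} \otimes \ev{\D})$ is again an adjunction; hence $\evL{\C \otimes \D} \simeq \evL{\C} \otimes \evL{\D}$, so that $\C \otimes \D$ is again smooth and $\Inv{\C \otimes \D} \simeq \Inv{\C} \otimes \Inv{\D}$ under $\endo{\C \otimes \D} \simeq \endo{\C} \otimes \endo{\D}$. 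Likewise $\ev{\C \otimes \D}^{\vee}(k) \simeq \ev{\C}^{\vee}(k) \otimes \ev{\D}^{\vee}(k)$, which corresponds to $\Id{\C \otimes \D} \simeq \Id{\C} \otimes \Id{\D}$. The middle factorisation isomorphism in the statement then follows \emph{without} invoking a general K\"unneth formula for mapping complexes: the adjunction $(\evL{}, \ev{})$ already used in \eqref{hochsmooth} identifies each complex $\Hom{\endo{\C}}{\Inv{\C}}{\Id{\C}}$ with the object $\ev{\C} \circ \ev{\C}^{\vee}(k) \in \Vect$, and the required factorisation becomes precisely the monoidality isomorphism $\ev{\C \otimes \D} \circ \ev{\C \otimes \D}^{\vee}(k) \simeq (\ev{\C} \circ \ev{\C}^{\vee}(k)) \otimes (\ev{\D} \circ \ev{\D}^{\vee}(k))$. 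Comparing the chain \eqref{hochsmooth} for $\C \otimes \D$ with the tensor product of the chains for $\C$ and $\D$ term by term, this simultaneously shows that the displayed composite agrees with the underlying $k$-linear K\"unneth isomorphism of the first part.

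The one genuinely delicate point is the homotopy-coherent compatibility between the symmetric monoidal and the $S^{1}$-equivariant refinements of the trace functor, which is what upgrades the first isomorphism from one of underlying complexes to one of $S^{1}$-complexes; for this I would simply cite \cite{hss}, which provides the monoidal $S^{1}$-equivariant trace together with its naturality. By contrast, the smooth-case factorisation is formal once the multiplicativity of the $\ev{}$, $\evL{}$, and $\ev{}^{\vee}$ data is in hand, and it needs no compactness input beyond what smoothness already guarantees.
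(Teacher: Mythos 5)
Your proposal is correct and follows essentially the same route as the paper: the $S^{1}$-equivariant Künneth isomorphism comes from the symmetric monoidality of the trace functor of Section \ref{subtraces} applied to the pointwise tensor product of fixed points, and the smooth-case factorisation comes from the multiplicativity of the evaluation data and passage to left adjoints, giving $\Inv{\C \otimes \D} \simeq \Inv{\C} \otimes \Inv{\D}$. Your extra remark that the mapping-complex compatibility reduces, via the $(\evL{},\ev{})$ adjunction, to a monoidality statement about the objects $\ev{\C}\circ\ev{\C}^{\vee}(k)$ is a slightly more careful justification of a step the paper leaves implicit, but it is not a different argument.
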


\begin{proof}
The general K\"unneth theorem for traces follows from the trace formalism that we reviewed in Section \ref{subtraces}. The underlying $k$-linear isomorphism comes from the identification $(\C \otimes \D)^{\vee} \otimes (\C \otimes \D) \simeq (\D^{\vee} \otimes \D) \otimes (\C^{\vee} \otimes \C)$ and the corresponding identification $\ev{\C \otimes \D} \simeq \ev{\C} \otimes \ev{\D}$. In the case of smooth categories, passing to left adjoints gives a corresponding identification $\Inv{\C \otimes \D} \simeq \Inv{\C} \otimes \Inv{\D}$, whence the 
second claim follows. 
\end{proof}

\begin{prop}\label{tensorrelcy}
Given smooth dg categories $A$, $B$, and $C$ with a relative Calabi-Yau structure $\theta_1 \in \Hom{S^{1}}{k[d_1]}{\HH{B,A}}$ of dimension $d_1$ on a functor $f : A \rightarrow B$, and an absolute Calabi-Yau structure $\theta_2 \in \Hom{S^{1}}{k[d_2]}{\HH{\C}}$ of dimension $d_2$, the tensor product $f \otimes \Id{\C} : A \otimes C \rightarrow B \otimes \C$ has an induced relative Calabi-Yau structure $\theta_1 \otimes \theta_2$ of dimension $d_1 + d_2$. 

In particular, setting $A=0$, we see that the tensor product of two dg categories with Calabi-Yau structures has an induced Calabi-Yau structure.
\end{prop}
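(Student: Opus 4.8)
The plan is to build the class $\theta_1 \otimes \theta_2$ from the symmetric monoidality of Hochschild chains, and then to reduce its non-degeneracy, via the smooth description \eqref{hochsmooth}, to the non-degeneracy of the diagram \eqref{nondeg} for $f$ tensored with the isomorphism underlying $\theta_2$.

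First I would construct the class. Since $\HH{-} = \Tr{\Id{(-)}}$ and the trace functor of Section \ref{subtraces} is symmetric monoidal together with its $S^1$-equivariant structure (Proposition \ref{circleeq}), the K\"unneth isomorphism of Lemma \ref{kunneth} is an isomorphism of $S^1$-complexes and is natural in pairs of functors. Applied to $f \otimes \Id{C}$ it gives a commuting square of $S^1$-complexes with vertical K\"unneth isomorphisms whose bottom arrow is $\HH{f} \otimes \Id{\HH{C}}$; since the relative Hochschild chains are computed from the horizontal fibres and $- \otimes \HH{C}$ is exact, this yields an $S^1$-equivariant identification $\HH{B \otimes C, A \otimes C} \simeq \HH{B,A} \otimes \HH{C}$. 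The external product of the $S^1$-equivariant maps $\theta_1$ and $\theta_2$ (with $S^1$ acting through the diagonal, which is harmless since $k[d_1]$ and $k[d_2]$ carry the trivial action) then defines the desired $S^1$-equivariant class $\theta_1 \otimes \theta_2 : k[d_1+d_2] \to \HH{B,A} \otimes \HH{C} \simeq \HH{B \otimes C, A \otimes C}$.

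Next, to check non-degeneracy I would pass to the smooth picture. By \eqref{hochsmooth} and the smooth part of Lemma \ref{kunneth} there are natural isomorphisms $\Inv{A \otimes C} \simeq \Inv{A} \otimes \Inv{C}$ and $\Inv{B \otimes C} \simeq \Inv{B} \otimes \Inv{C}$, under which the endofunctor map underlying $\theta_1 \otimes \theta_2$ is $\alpha_1 \otimes \alpha_2 : \Inv{A \otimes C}[d_1+d_2] \to \Id{A \otimes C}$, where $\alpha_1$ underlies the image of $\theta_1$ in $\HH{A}$ and $\alpha_2 : \Inv{C}[d_2] \to \Id{C}$ is the isomorphism underlying the absolute structure $\theta_2$. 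Using $(f \otimes \Id{C})^r \simeq f^r \otimes \Id{C}$ and the fact, read off from the explicit description in the proof of Proposition \ref{hochfact}, that the unit \eqref{shriekunit} is compatible with $\otimes$, i.e. $\tilde\eta_{f \otimes \Id{C}} \simeq \tilde\eta_f \otimes \Id{\Inv{C}[d_2]}$, I would identify the entire diagram \eqref{nondeg} for $f \otimes \Id{C}$ with the external tensor of the diagram \eqref{nondeg} for $f$ with the morphism $\alpha_2$: its top row is the top row for $f$ tensored with $\Inv{C}[d_2]$, its bottom row is the bottom row for $f$ tensored with $\Id{C}$, and each of the three vertical maps is $v \otimes \alpha_2$ for the corresponding vertical $v$ of the $f$-diagram (the middle vertical being $(f\alpha_1 f^r) \otimes \alpha_2$, and the left and right verticals the induced maps on ${\rm fib}$ and ${\rm cof}$, which commute with $- \otimes (\text{fixed functor})$ by exactness and functoriality of (co)fibre sequences). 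Since each $v$ is an isomorphism by the relative Calabi-Yau hypothesis on $f$ and $\alpha_2$ is an isomorphism by the absolute hypothesis on $C$, every $v \otimes \alpha_2$ is an isomorphism, so all vertical arrows of \eqref{nondeg} for $f \otimes \Id{C}$ are isomorphisms and $\theta_1 \otimes \theta_2$ is a relative Calabi-Yau structure of dimension $d_1 + d_2$. The final assertion then follows by taking $A = 0$ and invoking the identification of relative Calabi-Yau structures on $0 \to B$ with absolute ones on $B$ recorded after the definition.

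The main obstacle is precisely the compatibility of the unit \eqref{shriekunit}, and hence of the whole diagram \eqref{nondeg}, with the tensor product of dg categories: one must verify that the coevaluation/evaluation and counit data assembling $\tilde\eta$ are intertwined by the K\"unneth isomorphisms $\ev{A \otimes C} \simeq \ev{A} \otimes \ev{C}$ and $\Inv{A \otimes C} \simeq \Inv{A} \otimes \Inv{C}$. This is exactly the bookkeeping that is handled abstractly by the symmetric monoidality of the trace formalism of Section \ref{subtraces}, so although it is the crux of the argument, it is conceptually forced rather than genuinely hard; the remaining steps are formal consequences of exactness of tensor products and functoriality of (co)fibre sequences.
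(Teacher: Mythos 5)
Your proposal is correct and follows the same route as the paper, whose entire proof of Proposition \ref{tensorrelcy} is the single sentence ``This follows easily from the K\"unneth formula of Lemma \ref{kunneth}.'' You have simply supplied the details the paper leaves implicit: the $S^1$-equivariant K\"unneth identification of relative Hochschild chains and the identification of the non-degeneracy diagram \eqref{nondeg} for $f \otimes \Id{\C}$ with the external tensor of the diagram for $f$ against $\alpha_2$, both of which are exactly the content the authors intend to be extracted from Lemma \ref{kunneth} and its proof.
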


\begin{proof}
This follows easily from the K\"unneth formula of Lemma \ref{kunneth}.
\end{proof}

We state explicitly an important special case of Proposition \ref{tensorrelcy}.

\begin{cor}\label{stackexact}
Let $(\C,\theta)$ be a non-commutative Calabi-Yau of dimension $d$ and set $\C_{n}=\Mod{A_{n}} \otimes \C$. Then the functor
\[
\amalg_{i=1}^{n+1} \C \rightarrow \C_{n}
\]
induced by tensoring \ref{quivrelcy} with $(\C,\theta)$ carries a relative Calabi-Yau structure of dimension $d+1$, 
and the induced map of moduli
\[
\M_{\C_{n}} \rightarrow \prod_{i=1}^{n+1} \M_{\C}
\]
carries a Lagrangian structure with respect to the degree $2-d$ symplectic structure on $\M_{\C}$.\footnote{Note: The exact form of the relative Calabi-Yau structure on \ref{quivrelcy} introduces a sign into one of the factors of 
the symplectic structure on $\prod_{i=1}^{n+1} \M_{\C}$}.
\end{cor}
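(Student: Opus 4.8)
The statement decomposes into two assertions, the relative Calabi--Yau structure and the Lagrangian structure, and the plan is to obtain each as a direct assembly of results already in place. For the relative Calabi--Yau structure I would apply Proposition \ref{tensorrelcy} to the relative Calabi--Yau structure of dimension $1$ on the quiver functor \ref{quivrelcy} together with the absolute Calabi--Yau structure $\theta$ of dimension $d$ on $\C$. Taking $A=\amalg_{i=1}^{n+1}\Vect$, $B=\Mod{A_{n}}$, $d_{1}=1$ and $d_{2}=d$, the proposition produces a relative Calabi--Yau structure of dimension $d+1$ on the functor
\[
\left(\amalg_{i=1}^{n+1}\Vect\right)\otimes \C \longrightarrow \Mod{A_{n}}\otimes \C .
\]
Since $\otimes$ preserves colimits in each variable and $\Vect$ is the monoidal unit, the source identifies with $\amalg_{i=1}^{n+1}\C$ and the target is $\C_{n}$ by definition, which gives precisely the relative Calabi--Yau structure claimed.

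To extract the Lagrangian structure I would feed the continuous adjunction $\amalg_{i=1}^{n+1}\C \to \C_{n}$ into Theorem \ref{mainthm}(2). Both categories are smooth: a finite coproduct of smooth dg categories is smooth, and $\C_{n}=\Mod{A_{n}}\otimes \C$ is a tensor product of smooth categories. The theorem then yields a Lagrangian structure on the induced map of moduli $\M_{\C_{n}} \to \M_{\amalg_{i=1}^{n+1}\C}$. It remains to identify the base with the stated target: since the moduli functor is computed on an affine $U$ by $\Mapp{\dgcat}{(-)^{c}}{\Perf{U}}$, and mapping out of a coproduct of small dg categories computes a product of mapping spaces, there is a natural identification $\M_{\amalg_{i=1}^{n+1}\C}\simeq \prod_{i=1}^{n+1}\M_{\C}$ under which the Lagrangian lands where required.

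The step needing genuine attention is the compatibility hypothesis of Theorem \ref{mainthm}(2): that the relative Calabi--Yau structure restrict to the absolute Calabi--Yau structure inducing the symplectic form on the base. By the K\"unneth functoriality of Lemma \ref{kunneth}, the restriction of the tensored structure to the source factors as (restriction of the quiver relative structure)$\,\otimes\,\theta$. As recorded in the $n=2$ computation in the text, the quiver relative structure restricts on $\amalg_{i=1}^{n+1}\Vect$ to the class with alternating-sign components $(1,-1,1,\dots)$; tensoring with $\theta$ therefore produces the absolute Calabi--Yau structure of dimension $d$ on $\amalg_{i=1}^{n+1}\C$ with components $(\theta,-\theta,\theta,\dots)$. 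Via the identification above and Theorem \ref{mainthm}(1), this is exactly the product symplectic form of degree $2-d$ on $\prod_{i=1}^{n+1}\M_{\C}$ carrying the signs indicated in the footnote, so the hypothesis of Theorem \ref{mainthm}(2) holds by construction.

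I expect the main obstacle to be bookkeeping rather than new mathematics: keeping the shift and degree conventions consistent so that the dimension-$(d+1)$ relative structure from Proposition \ref{tensorrelcy} enters Theorem \ref{mainthm}(2) as a structure whose source restriction has dimension $d$, and hence induces a symplectic form of degree $2-d$; and tracking the alternating signs through the tensor product so that the orientations on the individual factors of $\prod_{i=1}^{n+1}\M_{\C}$ emerge as stated. Once the moduli identification and this sign-and-degree matching are pinned down, the corollary follows from the two cited results with no further computation.
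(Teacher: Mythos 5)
Your proposal is correct and follows the same route the paper intends: Corollary \ref{stackexact} is presented there simply as the special case of Proposition \ref{tensorrelcy} applied to the quiver functor \ref{quivrelcy} tensored with $(\C,\theta)$, fed into Theorem \ref{mainthm}. You supply somewhat more detail than the paper does (the identifications $(\amalg_{i}\Vect)\otimes\C\simeq\amalg_{i}\C$ and $\M_{\amalg_{i}\C}\simeq\prod_{i}\M_{\C}$, and the sign bookkeeping matching the footnote), all of which is consistent with the text.
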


 \bibliographystyle{plain}
\bibliography{cy2bib} 

\begin{thebibliography}{10}

\bibitem{amr}
David Ayala, Aaron Mazel-Gee, and Nick Rozenblyum.
\newblock Factorization homology of enriched?-categories.
\newblock {\url{arXiv:1710.06414}}.

\bibitem{bzntraces}
David Ben-Zvi and David Nadler.
\newblock Nonlinear traces.
\newblock {\url{arXiv:1305.7175}}.

\bibitem{bznloops}
David Ben-Zvi and David Nadler.
\newblock {L}oop spaces and connections.
\newblock {\em Journal of Topology}, 5(2):377--430, 2012.

\bibitem{cy1}
C.~Brav and T.~Dyckerhoff.
\newblock Relative {C}alabi-{Y}au structures.
\newblock {\url{arXiv:1606.00619 }}.

\bibitem{calaque}
Damien Calaque.
\newblock {L}agrangian structures on mapping stacks and semi-classical {TFT}s.
\newblock {\em Stacks and categories in geometry, topology, and algebra},
  643:1--23, 2015.

\bibitem{cohenganatra}
Ralph Cohen and Sheel Ganatra.
\newblock {C}alabi-{Y}au categories, the {F}loer theory of a cotangent bundle,
  and the string topology of the base.
\newblock {\url{http://math.stanford.edu/~ralph/scy-floer-string_draft.pdf}}.

\bibitem{dringaits}
Vladimir Drinfeld and Dennis Gaitsgory.
\newblock On some finiteness questions for algebraic stacks.
\newblock {\em Geometric and Functional Analysis}, 23(1):149--294, 2013.

\bibitem{gr1}
Dennis Gaitsgory and Nick Rozenblyum.
\newblock {\em A {S}tudy in {D}erived {A}lgebraic {G}eometry: {V}olume I:
  {C}orrespondences and {D}uality}, volume 221 of {\em Mathematical Surveys and
  Monographs}.
\newblock 2017.

\bibitem{gr2}
Dennis Gaitsgory and Nick Rozenblyum.
\newblock {\em A Study in {D}erived {A}lgebraic {G}eometry. {V}olume II:
  {D}eformations, {L}ie Theory and {F}ormal {G}eometry}, volume 221 of {\em
  Mathematical Surveys and Monographs}.
\newblock 2017.

\bibitem{goldman}
William~M Goldman.
\newblock The symplectic nature of fundamental groups of surfaces.
\newblock {\em Advances in Mathematics}, 54(2):200--225, 1984.

\bibitem{henn}
Benjamin Hennion.
\newblock {T}angent {L}ie algebra of derived {A}rtin stacks.
\newblock {\em Journal f{\"u}r die reine und angewandte Mathematik (Crelles
  Journal)}, 2018(741):1--45, 2018.

\bibitem{hss}
Marc Hoyois, Sarah Scherotzke, and Nicol{\`o} Sibilla.
\newblock Higher traces, noncommutative motives, and the categorified {C}hern
  character.
\newblock {\em Advances in Mathematics}, 309:97--154, 2017.

\bibitem{sjf}
Theo Johnson-Freyd and Claudia Scheimbauer.
\newblock (op)lax natural transformations, twisted quantum field theories, and
  ``even higher'' {M}orita categories.
\newblock {\em Advances in Mathematics}, 307:147--223, 2017.

\bibitem{kondprih}
Grigory Kondyrev and Artem Prihodko.
\newblock Categorical proof of holomorphic {A}tiyah-{B}ott formula.
\newblock {\url{arXiv:1607.06345}}.

\bibitem{loday}
Jean-Louis Loday.
\newblock {\em Cyclic homology}, volume 301.
\newblock Springer Science \& Business Media, 2013.

\bibitem{lurHA}
Jacob Lurie.
\newblock Higher algebra.
\newblock {\url{http://www.math.harvard.edu/~lurie/papers/HA.pdf}}.

\bibitem{lurHT}
Jacob Lurie.
\newblock {\em Higher Topos Theory (AM-170)}.
\newblock Princeton University Press, 2009.

\bibitem{mukai}
Shigeru Mukai.
\newblock Symplectic structure of the moduli space of sheaves on an abelian or
  {K}3 surface.
\newblock {\em Inventiones mathematicae}, 77(1):101--116, 1984.

\bibitem{ptvv}
Tony Pantev, Bertrand To{\"e}n, Michel Vaqui{\'e}, and Gabriele Vezzosi.
\newblock Shifted symplectic structures.
\newblock {\em Publications math{\'e}matiques de l'IH{\'E}S}, 117(1):271--328,
  2013.

\bibitem{shendetakeda}
Vivek Shende and Alex Takeda.
\newblock Symplectic structures from topological {F}ukaya categories.
\newblock {\url{arXiv:1605.02721}}.

\bibitem{toenreview}
Bertrand To{\"e}n.
\newblock Derived algebraic geometry.
\newblock {\em EMS Surveys in Mathematical Sciences}, 1(2):153--240, 2014.

\bibitem{tv}
Bertrand To{\"e}n and Michel Vaqui{\'e}.
\newblock Moduli of objects in dg-categories.
\newblock {\em Annales scientifiques de l' \'Ecole normale sup{\'e}rieure},
  40(3):387--444, 2007.

\bibitem{tvchern}
Bertrand To{\"e}n and Gabriele Vezzosi.
\newblock Caract\`eres de {C}hern, traces {\'e}quivariantes et
  g{\'e}om{\'e}trie alg{\'e}brique d{\'e}riv{\'e}e.
\newblock {\em Selecta Mathematica}, 21(2):449--554, 2015.

\bibitem{waikit}
Wai-kit Yeung.
\newblock Weak {C}alabi-{Y}au structures and moduli of representations.
\newblock {\url{arXiv:1802.05398}}.

\end{thebibliography}

\end{document}